\def\vint_#1{\mathchoice%
      {\mathop{\kern 0.2em\vrule width 0.6em height 0.69678ex depth -0.58065ex
              \kern -0.8em \intop}\nolimits_{\kern -0.4em#1}}%
      {\mathop{\kern 0.1em\vrule width 0.5em height 0.69678ex depth -0.60387ex
              \kern -0.6em \intop}\nolimits_{#1}}%
      {\mathop{\kern 0.1em\vrule width 0.5em height 0.69678ex depth -0.60387ex
              \kern -0.6em \intop}\nolimits_{#1}}%
      {\mathop{\kern 0.1em\vrule width 0.5em height 0.69678ex depth -0.60387ex
              \kern -0.6em \intop}\nolimits_{#1}}}
\def\vintslides_#1{\mathchoice%
      {\mathop{\kern 0.1em\vrule width 0.5em height 0.697ex depth -0.581ex
              \kern -0.6em \intop}\nolimits_{\kern -0.4em#1}}%
      {\mathop{\kern 0.1em\vrule width 0.3em height 0.697ex depth -0.604ex
              \kern -0.4em \intop}\nolimits_{#1}}%
      {\mathop{\kern 0.1em\vrule width 0.3em height 0.697ex depth -0.604ex
              \kern -0.4em \intop}\nolimits_{#1}}%
      {\mathop{\kern 0.1em\vrule width 0.3em height 0.697ex depth -0.604ex
              \kern -0.4em \intop}\nolimits_{#1}}}
\newcommand{\aveint}[2]{\mathchoice%
      {\mathop{\kern 0.2em\vrule width 0.6em height 0.69678ex depth -0.58065ex
              \kern -0.8em \intop}\nolimits_{\kern -0.45em#1}^{#2}}%
      {\mathop{\kern 0.1em\vrule width 0.5em height 0.69678ex depth -0.60387ex
              \kern -0.6em \intop}\nolimits_{#1}^{#2}}%
      {\mathop{\kern 0.1em\vrule width 0.5em height 0.69678ex depth -0.60387ex
              \kern -0.6em \intop}\nolimits_{#1}^{#2}}%
      {\mathop{\kern 0.1em\vrule width 0.5em height 0.69678ex depth -0.60387ex
              \kern -0.6em \intop}\nolimits_{#1}^{#2}}}
\begin{document}

\newtheorem{theorem}{Theorem}[section]
\newtheorem{proposition}[theorem]{Proposition}
\newtheorem{lemma}[theorem]{Lemma}
\newtheorem{eg}[theorem]{Example}
\newtheorem{definition}[theorem]{Definition}
\newtheorem{remark}[theorem]{Remark}
\newtheorem{notn}[theorem]{Notation}
\newtheorem{corollary}[theorem]{Corollary}
\newtheorem{conjecture}[theorem]{Conjecture}
\newtheorem{assumption}[theorem]{Assumption}
\newtheorem{condn}[theorem]{Condition}
\newtheorem{example}[theorem]{Example}

\newcommand\numberthis{\addtocounter{equation}{1}\tag{\theequation}}

\newcommand{\IP}{\mathbb P}
\newcommand{\IQ}{\mathbb Q}
\newcommand{\IE}{\mathbb E}
\newcommand{\IR}{\mathbb R}
\newcommand{\IZ}{\mathbb Z}
\newcommand{\IN}{\mathbb N}
\newcommand{\IT}{\mathbb T}
\newcommand{\IC}{\mathbb C}

\newcommand{\rmP}{\mathrm{P}}
\newcommand{\rmE}{\mathrm{E}}
\newcommand{\rmT}{\mathrm{T}}
\newcommand{\rmt}{\mathrm{t}}

\newcommand{\tX}{\tilde{X}}

\newcommand{\hX}{\hat{X}}
\newcommand{\hS}{\hat{S}}
\newcommand{\hq}{\hat{q}}
\newcommand{\hQ}{\hat{Q}}

\newcommand{\cP}{\mathcal{P}}
\newcommand{\cO}{\mathcal{O}}
\newcommand{\cA}{\mathcal{A}}
\newcommand{\cL}{\mathcal{L}}
\newcommand{\cG}{\mathcal{G}}
\newcommand{\cC}{\mathcal{C}}
\newcommand{\cV}{\mathcal{V}}
\newcommand{\cE}{\mathcal{E}}
\newcommand{\cT}{\mathcal{T}}
\newcommand{\cD}{\mathcal{D}}
\newcommand{\cZ}{\mathcal{Z}}
\newcommand{\cB}{\mathcal{B}}
\newcommand{\cK}{\mathcal{K}}
\newcommand{\cN}{\mathcal{N}}
\newcommand{\cX}{\mathcal{X}}
\newcommand{\cF}{\mathcal{F}}
\newcommand{\dd}{\mathrm{d}}
\newcommand{\ind}{\mathbf{1}}

\newcommand{\vs}{\mathbf{s}}

\pagestyle{fancy}
\lhead{}
\chead{Evolution in a fluctuating environment}
\rhead{}

\numberwithin{equation}{section}

\title{\large{\bf The spatial Lambda-Fleming-Viot
process with fluctuating selection}}
                                           
\author{ \begin{small}
\begin{tabular}{lll}                              
Niloy Biswas & Alison M.~Etheridge &
Aleksander Klimek \thanks{Supported by EPSRC grant
number EP/L015811/1}
\\   
Department of Statistics
&Department of Statistics
&
Max Planck Institute\\       
Harvard University & Oxford University & for Mathematics in Natural Sciences\\                   
Science Center 400 Suite& 24--29 St Giles & Inselstrasse 22\\                                                         
One Oxford Street & Oxford OX1 3LB & 04103 Leipzig \\
Cambridge, MA 02138-2901 & UK & DE\\
USA &  &  \\  niloy\_biswas@g.harvard.edu                      
 & etheridg@stats.ox.ac.uk &  klimek@mis.mpg.de    \\
\end{tabular}
\end{small}}


\maketitle

\begin{abstract}
We are interested in populations in which the fitness of different 
genetic types fluctuates in time and space, driven by temporal and 
spatial fluctuations in the environment. For
simplicity, our population is assumed to be composed of just two genetic types.
Short bursts of selection acting in opposing directions drive to
maintain both types at intermediate frequencies, while the fluctuations
due to `genetic drift' work to eliminate variation in the population.

We consider first a population with no spatial structure, modelled by an
adaptation of the
Lambda (or generalised) Fleming-Viot process, and derive a stochastic 
differential equation as a scaling limit. This amounts to a limit
result for a Lambda-Fleming-Viot process in a rapidly fluctuating random 
environment. We then extend to a population 
that is distributed across a spatial continuum, which we model through
a modification of the spatial Lambda-Fleming-Viot process with selection.
In this setting we show that the scaling limit is a stochastic partial
differential equation. As is usual with spatially distributed populations,
in dimensions greater than one, the `genetic drift' disappears in the 
scaling limit, but here we retain some stochasticity due to the 
fluctuations in the environment, resulting in a stochastic p.d.e.~driven
by a noise that is white in time but coloured in space. 

We discuss the (rather limited) 
situations under which there is a duality with a system of 
branching and annihilating particles.
We also write down a system of equations that captures the frequency of 
descendants of particular subsets of the population and use this same
idea of `tracers', which we learned from \cite{hallatschek/nelson:2008}
and \cite{durrett/fan:2016}, in numerical experiments with a closely related
model based on the classical Moran model. 
\vspace{.1in}

\noindent {\bf Key words:}  Spatial Lambda Fleming-Viot model, Fluctuating selection,
stochastic growth models, Tracer dynamics, scaling limits

\vspace{.1in}

\noindent {\bf MSC 20}10 {\bf Subject Classification:}  Primary: 
60G57, 
60J25, 
92D15  
\\
Secondary:  
60J75, 
60G55 

\end{abstract}



\maketitle
\tableofcontents

\bibliographystyle{plainnat}
\DeclareRobustCommand{\VAN}[3]{#3}

\section{Introduction}

A fundamental challenge in population genetics is to understand the 
balance between
adaptive processes ({\em selection}) and random neutral processes 
({\em genetic drift}). 
The most studied example of adaptation 
is directional selection acting on a single
genetic locus. In the simplest model, each individual is either of type
$a$ or type $A$ at the locus under selection, and the relative fitnesses of
individuals carrying the two types is $1+s_0:1$, for some small parameter $s_0$.
At least provided that random fluctuations don't eliminate the favoured 
type before it can become established, 
natural
selection will act to remove variability from the population until,
in the absence of mutation, 
everyone is of the favoured type. However, there are other forms of 
selection that act to maintain genetic variation. 
In this paper we are concerned with
populations that are subject to 
changing environmental conditions, that cause relative fitnesses of different 
genotypes to fluctuate in time and space.
To quote \cite{gillespie:2004}, {\em 
``If fitnesses do depend on the state of
the environment, as they surely must, then they must just as 
assuredly change in both time and
space, driven by temporal and spatial fluctuations in the environment.''}

We shall suppose that our population occurs in just two types ({\em alleles}), 
$\{a,A\}$ and that
the environment fluctuates between two states, in the first of which $a$,
and in the second of which $A$, is favoured. 
We suppose that selection is sufficiently strong that if the environment 
did not
fluctuate, the favoured type would rapidly fix in the 
population, but that there is a 
`balance' between the two environments so that both types can be maintained
at non-trivial frequencies for long periods of time.
If the population has no spatial structure, then over large timescales
the frequency of $a$-alleles
can be modelled by a stochastic differential equation:
\begin{equation}
\label{basic sde}
\dd p={\vs}^2p(1-p)(1-2p)\dd t +\sqrt{p(1-p)}\mathrm{d}B^1+\sqrt{2}{\vs}p(1-p)\mathrm{d}B^2,
\end{equation}
where $B^1$ and $B^2$ are independent
Brownian motions, the first (as we shall explain in Section~\ref{nospace})
capturing the randomness due to genetic drift (that is the
randomness due to reproduction in a finite population), 
the second encoding the random fluctuations in the environment
(which are assumed to happen quickly on evolutionary timescales). The constant
${\vs}$ is a scaled selection coefficient (see Section~\ref{nospace}).
For a population distributed across a one-dimensional spatial continuum,
one can write down an analogous stochastic partial
differential equation:
\begin{multline}
\label{spde in one dimension}
\dd w(t,x)= \frac{1}{2}\Delta w(t,x)\dd t +{\vs}^2w(t,x)(1-w(t,x)(1-2w(t,x))\dd t
\\
+\sqrt{w(t,x)(1-w(t,x))}{\mathcal W}(\dd t,\dd x)
+\sqrt{2}{\vs}w(t,x)(1-w(t,x))W(\dd t, \dd x),
\end{multline}
where ${\mathcal W}$ is space-time white noise (capturing genetic drift)
and the independent noise $W$ is white
in time, but may be coloured in space reflecting spatial correlations in 
the environmental fluctuations. In the biologically most relevant case
of two dimensions, this equation has no solution, and we must find a 
different approach.

The difficulties with modelling genetic drift in
populations evolving in higher dimensional spatial
continua, often referred to as `the pain in the torus', are well known; see
\cite{barton/etheridge/veber:2013} for a review. They can be overcome 
using the spatial Lambda-Fleming-Viot process, introduced in  \cite{etheridge:2008} 
and rigorously constructed in, \cite{barton/etheridge/veber:2010},
and here we adapt that model to incorporate fluctuating selection. 

Our first result deals with the non-spatial case. We take a scaling limit
of the Lambda-Fleming-Viot process and recover~(\ref{basic sde}),
which coincides with that obtained by \cite{gillespie:2004} as a scaling
limit of a Wright-Fisher type model. We then turn to the scaling limit
of the spatial Lambda-Fleming-Viot process with fluctuating selection.
In dimension one, the limiting process coincides (up to constants)
with the stochastic 
p.d.e.~(\ref{spde in one dimension}).
In higher dimensions, the term corresponding to genetic drift
vanishes in the limit, but the effects of the fluctuations in the environment
can still persist, resulting in a stochastic p.d.e.~driven by (spatially) coloured
noise. 


Our ultimate aim is to find ways to distinguish the effects of spatial and 
temporal environmental fluctuations on genetic data. This would involve understanding
the genealogical trees relating individuals in a sample from the population. 
Although for our prelimiting model we can write down an analogue of the 
ancestral selection graph of \cite{krone/neuhauser:1997},   \cite{neuhauser/krone:1997}, 
which tracks all `potential ancestors' of individuals in a sample from the
population, this process seems to be rather unwieldy. Moreover, when we apply our 
rescaling, the scaled ancestral selection graphs do not converge and we have not
found a satisfactory way to extract genealogies for the limiting model.
When selection does not fluctuate, the original
ancestral selection graph can be thought of as a moment dual to the forwards in
time diffusion describing allele frequencies in the population.
It is natural to ask whether there are other dual processes that we 
could exploit when selection fluctuates.
Our attempts to find a useful dual for the equation~(\ref{spde in one dimension})
have met with limited success, but in
Section~\ref{duality} we show that (after an affine transformation)
there are circumstances in which a branching and annihilating dual exists.


In the absence of a useful dual process,
instead we take a first step towards understanding ancestry in
the population by following an 
interesting approach of \cite{hallatschek/nelson:2008} and, more recently,
 \cite{durrett/fan:2016} which uses the idea of `tracers' to explore the way in 
which descendants of a subpopulation of the type $a$ individuals evolve forwards in time.
In Section~\ref{tracers} we write down the system of stochastic p.d.e.'s that will
determine the tracer dynamics. This idea is exploited further in our 
numerical experiments
of Section~\ref{numerics}.

The rest of the article is laid out as follows. In Section~\ref{biology}
we very briefly outline some of the biological background. 
In Section~\ref{nospace} we consider the case in which the population
has no spatial structure. To prepare the ground for the case of spatially
structured populations, we work with the Lambda-Fleming-Viot process
(also sometimes known as the generalised Fleming-Viot process) that was
introduced in \cite{donnelly/kurtz:1999}, \cite{bertoin/legall:2003}.
In particular, we investigate different scaling limits, reflecting
longtime behaviour of the process for different balances between the 
rate of changes of environment and the strength of selection.
In Section~\ref{results}
we define the spatial Lambda-Fleming-Viot process with fluctuating selection and
give a precise statement of our scaling result
for this model. 
In Section~\ref{duality}, we discuss the situations in which we can 
investigate the limiting process through duality with a system of 
branching and annihilating particles. 
Tracers are introduced in Section~\ref{tracers}
and then explored numerically (for a Moran model of a subdivided population)
in Section~\ref{numerics}. The proof of
our main scaling limit is in Section~\ref{scaling proof}.
The appendices contain some (important) technical results that we
require in the course of the proofs.

\paragraph{Acknowledgement }We should like to thank Tom Kurtz and Amandine 
V\'eber for extremely helpful discussions
and two anonymous referees for a careful reading of the manuscript and valuable suggestions.

\section{Biological background}
\label{biology}

In this section we outline the biological context for this work. 
Although not a prerequisite for understanding the mathematics of 
subsequent sections, it explains our motivation for tackling this 
particular scaling limit.

Suppose that a gene occurs in just two forms that, because of environmental
fluctuations, each finds
itself subject to short alternating bursts 
of positive and
negative selection. Even if these changes are happening on a much faster 
scale than neutral evolution,
they may influence gene frequencies. For example, 
in diploid individuals 
(carrying two copies of the gene), a heterozygote
(carrying one allele of each type) may have higher mean fitness, when we take account of
different environments, than either homozygote, and so allelic variation
can be maintained for long periods, even though at any given time the population is 
subject to directional selection. 
This {\em marginal overdominance} is an example of {\em balancing selection}. 
In equation~(\ref{basic sde}) we see this in the deterministic term
($sp(1-p)(1-2p)\mathrm{d}t$) on the right hand side.  
1969, \cite{wright:1969} observed that spatial heterogeneity in the direction of selection 
combined with density dependent reproduction 
can also lead to {\em balanced polymorphism}, that is adaptive alleles are held at intermediate frequencies for long periods (see also \cite{delph/kelly:2014} and 
references therein).

One of the recurring arguments in evolutionary biology is whether 
evolution occurs principally through natural selection or through 
neutral processes, in which no particular genetic type is
favoured, such as genetic drift. 
A data set that has sat at the heart of this 
debate for the last 70 years is a time series of changes in the 
genotype frequency of a polymorphism of the Scarlet Tiger Moth,
{\em Callimporpha (Panaxia) dominula}, in an isolated population
at Cothill Fen near Oxford, UK.  
\cite{fisher/ford:1947} found that the proportion of a certain {\em medionigra} 
allele in the population increased significantly between
1929 and 1941 from $1.2$\% to $11.1$\%, and decreased to $5.2$\% 
between 1941 and 1946. 
They concluded,
``$\ldots$ the observed fluctuations in generations are much greater than 
could be ascribed to random survival only. Fluctuations in natural selection 
must therefore be responsible for them.''.
Fisher (a strong proponent of the importance of 
selection) was challenged by Wright (a champion of
genetic drift) who argued that multiple factors could be at play and,
moreover, Fisher may have underestimated the strength of genetic drift.
 (2005, \cite{ohara:2005} analysed the, by then 60 year long, time series of data from
Cothill and concluded that most of the pattern of 
variation in the population should
be attributed to genetic drift. 
Moreover, although selection is acting, mean fitness
barely increased. 

It is unusual to have such a long time series of data, especially in conjunction
with information about the environment. In general it will also be
far from clear
which genes are undergoing selection, rather one tries to infer the 
action of selection through studying neutral diversity. For most populations, 
it may be very difficult to distinguish fluctuating selection from 
genetic drift. To see why, we recall a model due to Gillespie
that captures the effect of a series of `selective sweeps' through a 
population. 
Suppose that a 
selectively favoured mutation arises at some point on the genome and 
rapidly increases in frequency (until the whole population carries it).
Because genes are arranged on chromosomes, different genes
do not evolve independently of one another. As a result of a process called
recombination, correlations between genes decrease as a function of the
distance between them on the chromosome. Nonetheless, a neutral allele
fortunate enough to be on the same chromosome as the selectively favoured
mutation will itself receive a boost in
its frequency (even if as a result of recombination it doesn't exhaust the
whole population). This boost to
the type at the neutral locus is known as `genetic hitchhiking', 
a term introduced by  \cite{maynardsmith/Haigh:1974}.
Of course, correspondingly, a neutral allele associated with an 
unfavoured type will decrease in frequency.
 \cite{gillespie:2000},  \cite{gillespie:2001}
investigated a model in which strongly selected mutations
which give rise to hitchhiking events occur at the points of a Poisson process. He assumes
that selection is strong enough that the duration of the sweeps causing the hitchhiking
events that affect a given locus is small compared to the time between them so that we can
ignore the possibility that a locus will be subject to two simultaneous hitchhiking events.
He establishes that the
first two moments of the change in allele frequency at the neutral 
locus over the course
of a hitchhiking event take exactly the same form as if they had been
produced by genetic drift over a single generation of reproduction. It is
not hard to see that we will see the same hitchhiking effects under `partial
sweeps' driven by environmental fluctuations. This
process of `genetic draft' induced by selection, 
strongly resembles genetic drift and it may be hard to distinguish the two.
 \cite{barton:2000} also considers the `genetic drift' induced by
hitchhiking.

Not surprisingly, the impact of environmental fluctuations on genetic variation has 
been extensively studied. Nonetheless, 
even in the absence of spatial structure, it remains an open question to 
characterise situations under which fluctuating environmental conditions 
can maintain genetic variation; see e.g.  \cite{novak/barton:2017}.
Moreover, the effects of genetic drift 
have been largely ignored.
This is perhaps because acting in isolation, genetic drift typically impacts gene
frequencies over periods of (tens of) thousands of generations, much longer than the time scales of
climatic fluctuation. 
However, once a particular genotype becomes rare, perhaps as a result of a run of 
unfavourable environments, stochastic fluctuations will be dominated by genetic drift, through
which the genotype can be lost. 

A further challenge in identifying genes that are subject to fluctuating selection is that, even if we can disentangle the effects of drift, 
numerous selection schemes lead to forms of balancing selection.
For example,
in the absence of spatial structure, allele frequency dynamics under fluctuating selection are identical
to those under within-generation fecundity variance polymorphism. In this setting,
 \cite{taylor:2013} shows that the effects on the genealogy at a linked neutral locus will differ.
\cite{fijarczyk/babik:2015} and the references therein provide an 
overview of theoretical and empirical evidence for various forms of
balancing selection and methods for their detection.

Recently, \cite{berglandetal:2014} reported hundreds of polymorphisms in 
{\em Drosophila melanogaster} whose frequencies oscillate among seasons and they attribute this to
strong, temporally variable selection. They also cite evidence that genetic 
(and phenotypic) variation is maintained by temporally fluctuating selection for a variety of other 
organisms.\cite{gompert:2016} proposes an approach to quantifying variable 
selection in populations experiencing both spatial and temporal variations in
selection pressure.
In spite of this combination of theoretical and empirical evidence for the importance of fluctuating
selection, we have only a limited understanding of some basic questions: how many loci 
are subject to temporally fluctuating selection? How strong is that selection?  What is the relationship between temporally and spatially varying selection?

Since natural environments are never truly constant, it is clearly important to 
understand the implication of temporally and spatially varying selection pressures. 
\cite{cvijovicetal:2015} examines some of the implications of temporal
fluctuations. Our work here is a step towards 
a tractable framework in which to
consider the combined effects of spatial and temporal fluctuations.

\section{The non-spatial case}
\label{nospace}

\subsection{The (non-spatial) model}

We first consider a population without
spatial structure. Although we would obtain exactly the same 
scaling limits if we were to use the classical Moran or
Wright-Fisher models as the basis of our approach, c.f.  \cite{gillespie:2004}, 
for consistency with what follows, we shall work with the
(non-spatial) Lambda-Fleming-Viot process.
The key ideas that will be 
required in the spatial setting already appear here, where they
are not obscured by notational complexity.
An analogous scaling limit is obtained for the 
Wright-Fisher model with fluctuating selection (using similar 
reasoning) in \cite{hutzenthaler/pfaffelhuber/printz:2018}.

We shall restrict ourselves to the special case of the 
Lambda-Fleming-Viot process in which reproduction events fall 
at a finite rate, determined by a Poisson process. We shall also
suppose that there are just two types of individual, $\{a,A\}$. In each event,
a parent is chosen from the population immediately before the event, and
a portion $u$ of the population is replaced by offspring of the same type
as the parent. In general the quantity $u$, which we shall call the
{\em impact} of the event, may be random. Selection (on fecundity)
can be incorporated
by weighting the choice of parent, to favour one type or the other, and
we shall extend previous versions of the model to allow 
the direction of selection to fluctuate.
More precisely, we have the following definition.

\begin{definition} [Lambda-Fleming-Viot process with fluctuating selection]
\label{LFVFS}

The Lambda-Fleming-Viot process with fluctuating selection, 
$\{p(t)\}_{t\geq 0}$ is a c\`adl\`ag process taking its values
in $[0,1]$, with $p(t)$ to be interpreted as the proportion of type $a$
individuals in the population at time $t$.

Let $\Pi$ be a Poisson process defined on $\mathbb{R}\times (0,1)\times (0,1)$
with intensity measure 
$\mathrm{d}t\otimes\nu(\mathrm{d}u)\otimes\sigma(\mathrm{d}{\vs})$, 
where $\nu$ and $\sigma$ are some probability measures. 
Moreover, let $\Pi^{env}$ be a rate $\tau^{env}$ Poisson process, 
independent of $\Pi$ (where $\tau^{env}\in(0,\infty))$.
The state of the environment is a random variable $\zeta(t)\in\{-1,1\}$. At the 
times of the Poisson process $\Pi^{env}$, $\zeta$ is resampled uniformly from 
$\{-1,1\}$.

The dynamics of $\{p(t)\}_{t\geq 0}$ can be  
described as follows.
If $(t,u,{\vs})\in \Pi$, a reproduction event occurs. Then:
\begin{enumerate}
\item select a parental type $\kappa \in \{a,A\}$ according to
\begin{align*}
\mathbb{P}[\kappa = a] = \frac{(1+{\vs})p(t_{-})}{1+{\vs}p(t_{-})} \quad \text{if} \quad 
\zeta =-1, \\
\mathbb{P}[\kappa = a] = \frac{p(t_{-})}{1+{\vs}(1-p(t_{-}))} \quad \text{if} \quad 
\zeta =1.
\end{align*}
\item A proportion $u$ of the population immediately before the event dies
and is replaced by offspring of the chosen type, that is
\begin{align*}
p(t) = (1-u)p(t_{-})  + \ind_{\{\kappa = a\}}u.
\end{align*}
\end{enumerate}
\end{definition}

\begin{remark}
If, instead of resampling the environment according to an 
independent Poisson process, we resampled it at each 
reproduction event, by choosing $\nu$ to be 
distributed as the proportion of hitchhikers when a selective sweep
occurs at a random distance from our chosen locus, 
we would recover Gillespie's model of 
genetic draft at a neutral locus linked to loci undergoing 
a sequence of selective sweeps. 

We shall see that the rate of resampling of the environment (relative
to the strength of selection in each event) plays a key role in the long
term behaviour of the population. 

\end{remark}

\subsection{Scaling limits}

In order to simplify the notation still further, we specialise to the
case in which the Poisson point process $\Pi$ of 
Definition~\ref{LFVFS}
has intensity $\mathrm{d}t\otimes \delta_{\bar{u}}\otimes \delta_{\vs}$ 
for some fixed
$\bar{u}$ and ${\vs}$; in other words we fix the impact and the strength of selection
in each event. A general result can be obtained from our calculations
below by integration. 

In order to obtain a diffusion approximation, we shall speed up the rate of
reproduction events by 
a factor $n$, but scale down both the impact and the strength of the selection.
We write $u_n$, ${\vs}_n$ for the impact and strength of selection at the
$n$th stage of our rescaling.
We shall also scale $\Pi^{env}$ to have rate $n^\gamma$, 
with $\gamma>0$ to be chosen.
We shall need the joint generator $\mathcal{L}^{(n)}$ of the pair $(p,\zeta)$ 
at 
the $n$th stage of this rescaling.
We write $\pi$ for the uniform measure on $\{-1,1\}$ and $\IE_\pi$ for the
corresponding expectation. 
In an obvious notation, for suitable test functions $f$, we have
\begin{multline*}
\mathcal{L}^{(n)}f(p,\zeta) 
= \ind_{\{\zeta=-1\}}n
\left(\left[\frac{(1+{\vs}_n)p}{1 + {\vs}_np}\right]f\left((1-u_n)p + u_n,\zeta\right) 
\right.
\\
\left.
\phantom{blablablablablablablablablabla}
+  
\left[\frac{1-p}{1 + {\vs}_np}\right]f\left((1-u_n)p,\zeta\right) - f(p,\zeta)\right) 
\\ 
+  \ind_{\{\zeta=1\}}n\left(\left[\frac{p}{1+{\vs}_n(1-p)}\right]f\left((1-u_n)p + 
u_n,\zeta\right) 
\right.
\\
\left.
\phantom{blablablablablablablablablabla}
+  
\left[\frac{(1+{\vs}_n)(1-p)}{1+{\vs}_n(1-p)}\right]f\left((1-u_n)p,\zeta\right) - f(p,\zeta)\right)
\\ 
+n^{\gamma}\left(\IE_{\pi}[f(p,\cdot)]-f(p,\zeta)\right).
\end{multline*}
Expanding the ratios involving ${\vs}_n$ as geometric series, and using Taylor's
Theorem to expand $f$ (as a function of $p$), we obtain
\begin{align}\label{rescaled generratarot for NSLFV after Taylor expansion}
\mathcal{L}^{(n)}f(p,\zeta)&= n\lbrace p - {\vs}_{n}\zeta
p(1-p)
+ \mathcal{O}({\vs}_n^2)\rbrace \nonumber \\
&\qquad \qquad \times \lbrace u_{n}(1-p) f^{\prime}(p,\zeta) + 
\frac12 u_{n}^{2}(1-p)^{2}f^{\prime\prime}(p,\zeta) +\mathcal{O}(u_n^3)\rbrace 
 \nonumber\\
&+ n \lbrace(1-p) + {\vs}_{n}\zeta p(1-p)+\mathcal{O}({\vs}_n^2)
\rbrace \nonumber \\
&\qquad\qquad \times
\lbrace -u_{n}pf^{\prime}(p,\zeta) + \frac{1}{2}u_{n}^{2}p^{2} 
f^{\prime\prime}(p,\zeta)
+\mathcal{O}(u_n^3)\rbrace \nonumber \\ 
\nonumber
 &+ n^{\gamma}\left(\mathbb{E}_{\pi}[f(p,\cdot)]- f(p,\zeta)\right)\\
=&\frac12 n u_n^2 p(1-p)f^{\prime\prime}(p,\zeta)
- n {\vs}_nu_n\zeta p(1-p)f^{\prime}(p,\zeta) \nonumber\\
& + n^{\gamma}\left(\mathbb{E}_{\pi}[f(p,\zeta)]- f(p,\zeta)\right)
+\mathcal{O}\left( n ({\vs}_n^2u_n+u_n^2{\vs}_n+u_n^3)\right).
\end{align}
In order to obtain a diffusion limit, we see that we should take 
$nu_n^2$ to be $\mathcal{O}(1)$. If the environment didn't change, then
we would require $nu_n{\vs}_n$ to be $\mathcal{O}(1)$ and on passage to the
limit recover the classical Wright-Fisher diffusion with selection, whose 
generator, if $\zeta= -1$ say, takes the form
$$\mathcal{L}^{WFS}f(p)=\frac{1}{2}p(1-p)f^{\prime\prime}(p)+{\vs}p(1-p)f^{\prime}(p).$$
Since we are modelling short bursts of strong selection, we set
\begin{align}\label{scaling1}
u_{n} = n^{-\frac12} \bar{u} \quad {\vs}_{n} = n^{-\frac12+\alpha}{\vs},
\end{align}
for some $\alpha\in (0,1/4)$. The restriction $\alpha<1/4$ ensures that the 
error term $n{\vs}_n^2u_n$ in the 
expression~(\ref{rescaled generratarot for NSLFV after Taylor expansion})
is negligible as $n\to\infty$. 

We can then write the rescaled generator in the form
\begin{align}\label{LFVres}
\mathcal{L}^{(n)}f(p,\zeta) = \mathcal{L}^{neu}f(p,\zeta) + 
n^{\alpha}\mathcal{L}^{fsel}f(p,\zeta) + n^{\gamma}\mathcal{L}^{env}f(p,\zeta) + 
\mathcal{O}\left(n^{-\frac{1}{2} + 2\alpha}\right),
\end{align}
where 
\begin{align*}
\mathcal{L}^{neu}f(p,\zeta) &=  \frac12 \bar{u}^{2}p(1-p)f^{\prime\prime}(p,\zeta)\\
\mathcal{L}^{fsel}f(p,\zeta) &= -\zeta\bar{u}{\vs}p(1-p)f^{\prime}(p,\zeta) \\
\mathcal{L}^{env}f(p,\zeta) &= \mathbb{E}_{\pi}[f(p,\zeta)]- f(p,\zeta).
\end{align*}
To see how we should choose $\gamma$, we employ a `separation of timescales' trick due to 
 \cite{kurtz:1973}.
We apply the generator~\eqref{LFVres} to test functions of the form 
\begin{align}
\label{test_function_g}
g(p,\zeta) = f(p) + n^{-\delta}\left(\mathcal{L}^{fsel}f\right)(p,\zeta).
\end{align}
For this choice, we obtain
\begin{multline}\label{PH trick, non-spatial}
\mathcal{L}g(p,\zeta)
= \mathcal{L}^{neu}f(p) + n^{\alpha}\left(\mathcal{L}^{fsel}f\right)(p,\zeta) 
+ n^{-\delta}\mathcal{L}^{neu}\left(\mathcal{L}^{fsel}_{n}f\right)(p,\zeta) 
\\+ n^{\alpha -\delta} \mathcal{L}^{fsel}\left(\mathcal{L}^{fsel}f\right)(p,\zeta) - 
n^{\gamma-\delta}\left(\mathcal{L}^{fsel}f\right)(p,\zeta) 
+\mathcal{O}\left(n^{-\frac{1}{2} + 2\alpha}\right),
\end{multline}
where we have used the fact that $\mathcal{L}^{env}f(p)=0$ (since $f$ does not
depend on $\zeta$) and 
$\IE_\pi[n^{-\delta}\left(\mathcal{L}^{fsel}f\right)(p,\zeta)]=0$,
since
$\mathbb{E}_{\pi}[\zeta] = 0$.

Evidently, to obtain a non-trivial limit we should take $\gamma=\delta +\alpha$. 
The most interesting case is when $\delta=\alpha$ and so
$\gamma=2\alpha$. In that case, letting $n\to\infty$, in the 
limit the equation~(\ref{PH trick, non-spatial}) becomes  
\begin{equation}
\label{limgen}
\mathcal{L}f(p,\zeta)=\mathcal{L}^{neu}f(p) + 
\mathcal{L}^{fsel} \left(\mathcal{L}^{fsel}f\right)(p,\zeta).
\end{equation}
To evaluate the right hand side,
\begin{align*}
\mathcal{L}^{fsel}\left(\mathcal{L}^{fsel}f\right)(p,\zeta) 
&= \mathcal{L}^{fsel}\left(-\zeta\bar{u}{\vs}p(1-p)f^{\prime}(p)\right)
= \zeta^2\bar{u}^2{\vs}^2 p(1-p)\frac{d}{dp}\left(p(1-p)\frac{d}{dp}f(p)\right)\\
&= \zeta^{2}\bar{u}^{2}{\vs}^{2}p(1-p)(1-2p)f^{\prime}(p) + 
\zeta^{2}\bar{u}^{2}{\vs}^{2}p^{2}(1-p)^{2}f^{\prime\prime}(p). 
\end{align*}
Noting that $\zeta^2\equiv 1$, equation~(\ref{limgen}) then reads
\begin{align*}
\mathcal{L}f(p) =  \bar{u}^{2}{\vs}^{2}p(1-p)(1-2p)f^{\prime}(p) 
+ \left(\frac12 \bar{u}^{2}p(1-p) + \bar{u}^{2}{\vs}^{2}p^{2}(1-p)^{2}\right)
f^{\prime\prime}(p).
\end{align*}

\begin{remark}
There are other limits that can be obtained when $\gamma>2\alpha$. For example
if $\alpha=1/4$ and we resample the environment at every reproduction event, 
corresponding to $\gamma>1$, then \cite{miller:2012}
shows that, under the same scaling of $u_n$, the frequency of type $a$
alleles in the population converges weakly to the solution of
$$\mathrm{d}p=\frac{1}{2}{\vs}^2\bar{u}p(1-p)(1-2p)\mathrm{d}t+\bar{u}\sqrt{p(1-p)}\mathrm{d}B_t$$
for a standard Brownian motion $\{B_t\}_{t\geq 0}$. The deterministic drift
here arises from the term of order $nu_n{\vs}_n^2$ that under our previous
scaling we were able to neglect 
in~(\ref{rescaled generratarot for NSLFV after Taylor expansion}).
\end{remark}

Based on these calculations, the following proposition follows 
easily from Theorem~2.1  of \cite{kurtz:1992}, which we recall later as Theorem~\ref{kurtzaveraging}. 
In the interests of 
space, we omit the details of the proof, which follows from exactly the
same arguments as those that we employ in the spatial setting.
\begin{proposition}
Let $\{p^{(n)}(t)\}_{t\geq 0}$ denote the (non-spatial)
Lambda-Fleming-Viot process of Definition~\ref{LFVFS}
in which $\Pi$ has intensity
$n\mathrm{d}t\otimes\delta{u_n}\otimes\delta_{\vs_n}$, where 
$$u_n=n^{-1/2}\bar{u}, \quad {\vs}_n=n^{-1/2+\alpha}{\vs}, \mbox{ and }
\tau^{env}=n^{2\alpha},$$
for some $\alpha\in (0,1/4)$.
Suppose further that the sequence of initial conditions 
$\{p^{(n)}(0)\}_{n\geq 1}$ converges  
to $p_0$ as $n\to\infty$.
Then as $n$ tends to infinity, $\{p^{(n)}(t)\}_{t\geq 0}$
converges weakly 
in $\mathcal{D}([0,\infty), [0,1])$  
(the space of c\`adal\`ag functions taking values in $[0,1]$)
to the one-dimensional diffusion with drift
\begin{align*}
\bar{u}^{2}{\vs}^{2}p(1-p)(1-2p)
\end{align*}
and quadratic variation
\begin{align*}
\bar{u}^{2}p(1-p) + 2\bar{u}^{2}{\vs}^{2}p^{2}(1-p)^{2},
\end{align*}
started from $p_0$.
In other words, the limiting process is the unique weak solution to the equation
\begin{multline} \label{non-spatial SPDE approximation}
\mathrm{d}p(t) =  \bar{u}^{2}{\vs}^{2}p(t)(1-p(t))(1-2p(t))\mathrm{d}t \\
+ 
\bar{u}\sqrt{p(t)(1-p(t))}\mathrm{d}B_t^1 + \sqrt{2}\bar{u}{\vs}p(t)(1-p(t))\mathrm{d}B_t^2,
\end{multline}
with $p(0)=p_0$, and $\{B^1_{t}\}_{t\geq 0}$, $\{B_t^2\}_{t\geq 0}$ 
independent standard Brownian Motions.
\end{proposition}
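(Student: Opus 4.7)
The plan is to apply the generator-convergence framework for averaging problems, formulated as Theorem~2.1 of \cite{kurtz:1992} (the theorem the authors call \textbf{Theorem~\ref{kurtzaveraging}} later in the paper). The slow variable is $p^{(n)}$ and the fast variable is the environment $\zeta^{(n)}$, resampled at rate $n^{2\alpha}\to\infty$ with stationary measure $\pi$ uniform on $\{-1,1\}$. The naive pre-limit generator decomposition~(\ref{LFVres}) diverges because of the term $n^{\alpha}\mathcal{L}^{fsel}$, so the perturbed test function
$g(p,\zeta)=f(p)+n^{-\alpha}(\mathcal{L}^{fsel}f)(p,\zeta)$
from~(\ref{test_function_g}) is the central device: it absorbs the diverging contribution into a corrector that vanishes uniformly as $n\to\infty$, at the cost of adding the bilinear term $\mathcal{L}^{fsel}\mathcal{L}^{fsel}f$ which, after using $\zeta^2\equiv 1$, is precisely the generator~(\ref{limgen}).

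First I would establish tightness of $\{p^{(n)}\}_{n\ge 1}$ in $\mathcal{D}([0,\infty),[0,1])$. Since the state space is compact, Aldous' criterion reduces to showing that $\mathcal{L}^{(n)}f$ is bounded uniformly in $(n,p,\zeta)$ for a separating family of test functions $f\in C^2([0,1])$; this is immediate from~(\ref{LFVres}) together with the scaling $u_n=n^{-1/2}\bar u$, $\vs_n=n^{-1/2+\alpha}\vs$ and the restriction $\alpha<1/4$ which ensures $n\vs_n^2 u_n=n^{-1/2+2\alpha}\to 0$. The same bound controls $\|g-f\|_\infty=O(n^{-\alpha})$, so the perturbed and unperturbed processes have the same limit points and coincide with $p^{(n)}$ in the limit.

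To identify any subsequential limit, I would run the perturbed martingale problem: for $f\in C^2([0,1])$,
\begin{align*}
M^{(n)}_t\ :=\ g\!\left(p^{(n)}(t),\zeta^{(n)}(t)\right)-g\!\left(p^{(n)}(0),\zeta^{(n)}(0)\right)-\int_0^t \mathcal{L}^{(n)} g\!\left(p^{(n)}(s),\zeta^{(n)}(s)\right)\mathrm{d}s
\end{align*}
is a martingale. By~(\ref{PH trick, non-spatial}) with $\gamma=\delta=\alpha$, the $n^{\alpha}\mathcal{L}^{fsel}f$ and $-n^{\gamma-\delta}\mathcal{L}^{fsel}f$ terms cancel exactly; the mixed term $n^{-\delta}\mathcal{L}^{neu}\mathcal{L}^{fsel}f$ and the remainder $O(n^{-1/2+2\alpha})$ vanish uniformly; and the surviving nontrivial piece $\mathcal{L}^{fsel}\mathcal{L}^{fsel}f(p,\zeta)$ depends on $\zeta$ only through $\zeta^2$, so the averaging against $\pi$ required by Kurtz's theorem is in this instance automatic. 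Passing to the limit along a convergent subsequence, every limit point solves the martingale problem with drift $\bar u^2\vs^2 p(1-p)(1-2p)$ and diffusion coefficient $\bar u^2 p(1-p)+2\bar u^2\vs^2 p^2(1-p)^2$, which may be represented as the SDE~(\ref{non-spatial SPDE approximation}) by writing the diffusion coefficient as the sum of two squares, $\bigl(\bar u\sqrt{p(1-p)}\bigr)^2+\bigl(\sqrt{2}\,\bar u\vs\, p(1-p)\bigr)^2$, and introducing two independent Brownian motions.

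Weak convergence of the full sequence, rather than merely relative compactness, then follows from weak uniqueness for~(\ref{non-spatial SPDE approximation}): the drift is Lipschitz and the diffusion coefficient is non-negative and H\"older-$1/2$ on $[0,1]$, so Yamada--Watanabe uniqueness applies, with the endpoints $\{0,1\}$ acting as absorbing boundaries. The main potential obstacle in a proof of this type is usually controlling the oscillating term $\mathcal{L}^{fsel}\mathcal{L}^{fsel}f(p^{(n)},\zeta^{(n)})$ against the fast environment; here the algebraic miracle $\zeta^2\equiv 1$ removes that difficulty entirely, which is why the authors can legitimately say the proposition follows easily from Kurtz's averaging theorem and defer the delicate version of this argument to the spatial setting.
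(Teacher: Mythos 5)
Your proposal follows essentially the same route as the paper: the perturbed test function~(\ref{test_function_g}), the cancellation at $\gamma=2\alpha$, $\delta=\alpha$, the identification of the averaged generator via $\zeta^{2}\equiv 1$, and the appeal to Theorem~2.1 of Kurtz (1992) are precisely the ingredients the authors invoke (they omit the details, deferring them to the spatial setting). One step as written would fail, however: your tightness paragraph asserts that $\mathcal{L}^{(n)}f$ is uniformly bounded in $(n,p,\zeta)$ ``immediately from~(\ref{LFVres})'', but that decomposition contains the term $n^{\alpha}\mathcal{L}^{fsel}f$, which grows like $n^{\alpha}$ --- exactly the divergence you yourself point out in your opening paragraph. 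The uniform bound holds for $\mathcal{L}^{(n)}g$, not for $\mathcal{L}^{(n)}f$, and relative compactness of $\{p^{(n)}\}$ should be obtained not from a separate Aldous argument on the unperturbed generator but as part of the conclusion of Kurtz's averaging theorem, whose hypotheses (compact containment~(\ref{tightness}), the decomposition~(\ref{kurtz rearrangement}) with uniformly bounded $A$, and the vanishing of $\epsilon_n^f$ as in~(\ref{kurtzboundepsilon})) are exactly what your perturbed martingale problem verifies. With that sentence repaired, the argument is the paper's.
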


\section{Definition and scaling of the SLFVFS}
\label{results}

In this section we first extend the Lambda-Fleming-Viot model with 
fluctuating selection
of Section~\ref{nospace}
to the spatial setting. The idea is simple: reproduction events are still
driven by a Poisson point process, but now, in addition to specifying the 
strength of selection and the impact associated with each event, we must also
specify the spatial region in which it takes place. As has become
usual in this framework, we shall take those regions to be closed balls (indeed
for simplicity we shall take our events to be of a fixed radius), but
the same results will hold under much more general conditions, subject
to some symmetry and boundedness assumptions.
Having defined the model, we state our main scaling result for the
spatial model. 

\subsection{Spatial Lambda-Fleming-Viot process with fluctuating selection}

We suppose that the population, which is distributed across $\IR^d$, is
subdivided into two genetic types $\{a,A\}$.
 As explained in detail in
 \cite{etheridge/veber/yu:2014}, which in turn borrows results
from \cite{veber/wakolbinger:2015}, formally, at each time the state of the
population is described by a measure $M_t$ 
on $\IR^d\times K$, where $K=\{a,A\}$,
whose first marginal is Lebesgue measure on $\IR^d$. This
space of measures, which we denote by ${\cal M}_\lambda$, is equipped with
the topology of vague convergence, under which it is compact. 
At any fixed time there is a density
$w(t,\cdot) : \IR^d\rightarrow [0,1]$ such that 
$$M_t(dx,d\kappa)=\left(w(t,x)\delta_a(d\kappa)+
(1-w(t,x))\delta_A(d\kappa)\right) \mathrm{d}x.$$
Of course $w(t,x)$, which one should interpret as the proportion of the 
population at the location $x$ at time $t$ that is of type $a$,
is only defined up to a Lebesgue null set. In what
follows, we shall consider a representative of the density of $M_t$. It
will be convenient to fix a representative $w(0,\cdot)$ of $M_0$ and then 
update it using the procedure described in the definition below, but the
reader should bear in mind that the fundamental object is the 
measure-valued evolution. This becomes important when we talk about 
convergence of our rescaled processes; tightness will be immediate in the
space of measures, but we will need to work harder to identify the 
dynamics of the density of the limit.

In what follows, for every 
$f \in C_{c}$ (continuous functions of compact support on $\IR^d$)
we shall use the notation
\begin{align*}
\left\langle w,f\right\rangle = \int_{\IR^d} w(x)f(x)\mathrm{d}x.
\end{align*}

Recall that the limit that we obtained in Section~\ref{nospace} 
corresponded to our throwing away the terms of order
$n{\vs}_n^2u_n$ in~(\ref{rescaled generratarot for NSLFV after Taylor expansion}).
In other words we approximated 
$(1+{\vs})p/(1+{\vs}p)$ by $p+{\vs}p(1-p)=
p(1-{\vs})+{\vs}\left(1-(1-p)^2\right)$ and similarly $p/(1+{\vs}(1-p))$ was 
approximated by $p-{\vs}p(1-p)= p(1-{\vs})+{\vs}p^2$. Under this
approximation, since reproduction events are
based on a Poisson process of events, we can think of splitting those events
into two types: neutral events and selective events. In the non-spatial
setting, neutral events occur at rate $(1-{\vs})$ and, for such an event, 
the chance that the parent is type $a$ is $p$. Selective events
fall at rate ${\vs}$. One then selects two `potential' parents. If $\zeta=-1$,
then the offspring are type $a$ provided not both potential parents are 
type $A$, which has probability $1-(1-p)^2$,
whereas if $\zeta=1$, the offspring are type $a$ only if both
potential parents are type $a$ (probability $p^2$).

To avoid additional algebra, we shall define the spatial version of our
model using this approximation. In our main scaling result, 
we shall indeed choose our scaling in such a
way that $n{\vs}_n^2u_n\to 0$ as $n\to\infty$.

\begin{definition}[Spatial Lambda-Fleming-Viot process with fluctuating selection (SLFVFS)]
\label{SLFVSFE_def}
Let $\mu$ be a measure on $(0, \infty)$ and for each $r\in (0,\infty)$, 
let $\nu_{r}$ be a probability measure on $[0,1]$, such that the 
mapping $r \rightarrow \nu_r$ is measurable and 
\begin{align}
\label{integrability}
\int_{(0,\infty)}r^{d}\int_{[0,1]}u\; \nu_{r}(\mathrm{d}u)\mu(\mathrm{d}r) < \infty.
\end{align} 
Further, fix ${\vs}\in [0,1]$ and 
let $\Pi^{neu}$, $\Pi^{fsel}$,  be independent Poisson point processes on 
$\IR_+\times \IR^d \times (0,\infty) \times [0,1]$ with intensity measures 
$(1-{\vs})\mathrm{d}t \otimes \mathrm{d}x \otimes \mu(\mathrm{d}r)\nu_{r}(\mathrm{d}u)$
and 
${\vs}\mathrm{d}t \otimes \mathrm{d}x \otimes \mu(\mathrm{d}r)\nu_{r}(\mathrm{d}u)$ 
respectively. 

Let $\Pi^{env}$ be a Poisson process, independent of $\Pi^{neu}$, $\Pi^{fsel}$, 
with intensity $\tau^{env}$, dictating the times of the changes in the environment.
Let $\{\xi^{(m)}(\cdot)\}_{m\geq 0}$ be a  family of identically distributed 
random fields  such that 
\begin{align*}
\mathbb{P}\left[\xi^{(m)}(x) =-1  \right] = \frac{1}{2}=
\mathbb{P}\left[\xi^{(m)}(x) =+1 \right],\\
\mathbb{E}\left[ \xi^{(m)}(x)\xi^{(m)}(y) \right] = g(x,y),
\end{align*}
where the covariance function
$g(x,y)$ is an element of $C_{b}\left(\IR^d \times \IR^d\right)$.
Set $\tau_0=0$ and write $\{\tau_m\}_{m\geq 1}$ for the points in $\Pi^{env}$ and define
$$\zeta(t,\cdot):=\sum_{m=0}^\infty\xi^{(m)}(\cdot)\ind_{[\tau_m,\tau_{m+1})}(t).$$
In other words, the environment $\zeta(t,\cdot)$ is resampled, independently, at
the times of the Poisson process $\Pi^{env}$.

The {\em spatial Lambda-Fleming-Viot process with fluctuating selection (SLFVFS)}
with driving noises $\Pi^{neu}$, $\Pi^{fsel}$, $\Pi^{env}$, is the 
$\mathcal{M}_{\lambda}$-valued process $M_t$ with
dynamics described as follows. Let $w(t_{-},\cdot)$ be a representative of the 
density of $M_{t_-}$ immediately before an event $(t,x,r,u)$ from $\Pi^{neu}$
or $\Pi^{fsel}$. Then the measure $M_{t_-}$ immediately after the event has 
density $w(t, \cdot)$ determined by:
\begin{enumerate}
\item If $(t,x,r,u) \in \Pi^{neu}$, a neutral event occurs at time 
$t$ within the closed ball $B(x,r)$. Then
\begin{enumerate}
\item Choose a parental location $l$ according to the uniform distribution on
$B(x,r)$. 
\item Choose the parental type $\kappa \in \{a,A\}$ according to the distribution
\begin{align*}
\mathbb{P}\left[\kappa = a \right] = w (t_{-},l) , 
\quad \mathbb{P}\left[\kappa = A \right] = 1 -  w (t_{-},l).
\end{align*}
\item A proportion $u$ of the population within
$B(x,r)$ dies and is replaced by offspring with type $\kappa$.
Therefore, for each point $y \in B(x,r)$, 
\begin{align*}
w(t,y) = w(t_{-},y)(1 - u) + u\ind_{\{\kappa = a\}}. 
\end{align*}
\end{enumerate}

\item If $(t,x,r,u) \in \Pi^{fsel}$, a   selective event  occurs at 
time $t$ within the closed ball $B(x,r)$. Then
\begin{enumerate}
\item Choose two parental locations $l_{0}, l_{1}$ independently, 
according to the uniform distribution on $B(x,r)$.
\item Choose the two parental types, $\kappa_{0}, \kappa_{1}, $ independently, according to  
\begin{align*}
\mathbb{P}\left[\kappa_{i} = a \right] = w (t_{-},l_{i}), \quad 
\mathbb{P}\left[\kappa_{i} = A \right] = 1 -  w (t_{-},l_{i}).
\end{align*}
\item A proportion $u$ of the population within $B(x,r)$ dies
and is replaced by offspring with type chosen as follows:
\begin{enumerate}
\item If  $\zeta(t,x) =1$, their type is set to be $a$ if   
$\kappa_{0} = \kappa_{1} = a$, and $A$ otherwise. 
Thus for each $y \in B(x,r)$ 
\begin{align*}
w(t,y) & = (1 - u)w(t_{-},y)  + u\ind_{\{\kappa_{0} = \kappa_{1} = a\}}. 
\end{align*}
\item If $\zeta(t,x) =-1$, their type is set to be $a$ if   
$\kappa_{0} = \kappa_{1} = a$ or $\kappa_{0} \neq \kappa_{1}$ and $A$ otherwise, 
so that for each $y \in B(x,r)$, 
\begin{align*}
w(t,y) & = (1 - u)w(t_{-},y)  + u\left( \ind_{\{\kappa_{0} = \kappa_{1} = a\}} + 
\ind_{\{\kappa_{0} \ne \kappa_{1}\}} \right). 
\end{align*} 
\end{enumerate}
\end{enumerate}
\end{enumerate}
\end{definition}
We have tacitly assumed that $w(t_{-},l_i)$ is defined. It is, with probability
one, so we declare that if it is not defined, then we resample and try again.
For a construction of the random fields $\{\xi^{(m)}(\cdot)\}_{m\geq 0}$
of Definition~\ref{SLFVSFE_def} we refer to \cite{ma:2009}, 
especially their Example~1. The arguments presented there require only 
minor adaptation. 

Existence of the SLFVFS is guaranteed by the methods of \cite{etheridge/veber/yu:2014}.
Indeed, we could have taken different measures $\mu$ and $\nu_r$ according to 
whether events are
selective or neutral. Although it is convenient to take the strength of 
selection to be constant in space and have its direction determined by 
the variable
$\zeta\in\{-1,+1\}$, we could, of course, have defined a much more 
general model. 
For example, one could allow ${\vs}$ to vary in space, or even resample
${\vs}\zeta$ from a suitable random field whenever the environment is
resampled. However, this would be at the expense of considerably more 
complicated notation and it would become more involved to exploit the 
Poisson structure of our model. 
See Remark~\ref{remark on environment} below for some comments on when 
our scaling result would generalise.

One of the key tools in the study of the neutral SLFV 
is the dual process of coalescing 
random walkers which traces out the genealogical trees 
relating individuals in a sample
from the population.  An ancestral 
lineage doesn't move until it is both 
in the region affected by an event and is among the 
offspring of that event, 
at which time it jumps to the location of the parent of 
the event (which is uniformly distributed on the affected region).
Things are more complicated in the presence of selection.
Whereas in the neutral case we can always identify the distribution of the 
location of the parent of each event, 
now, at a selective event, even knowing the state of the environment, 
we are unable to identify which of the `potential parents' is the
true parent of the event without knowing their types. These can only be established by
tracing further into the past.
The resolution is to follow all
potential ancestral lineages backwards in time. This results in a system of 
branching and coalescing walks in which branching and coalescence events
are `marked' according to the state of the environment at the time
at which they occur.

Just as in the neutral case, the
dynamics of the dual are driven by the time reversals 
$\overleftarrow{\Pi}^{neu}$, $\overleftarrow{\Pi}^{sel}$, 
$\overleftarrow{\Pi}^{env}$ 
of the Poisson point processes of events 
that drove the forwards in 
time process, that is
\begin{align*}
\overleftarrow{\Pi}^{i} := \left\lbrace
(-t,x,r,u): (t,x,r,u) \in \Pi^{i}
\right\rbrace,
\quad
i \in \{\text{neu}, \text{sel}, \text{env}\}
.
\end{align*}
The distribution of these Poisson point processes is 
invariant under the time reversal.

We emphasize that time for the process of ancestral lineages
runs in the opposite direction to that for the allele frequencies. 
Our dual will relate the distribution of allele frequencies in a 
sample from the population at a time $T$, to 
allele frequencies at time $0$.
More precisely,
suppose that we know the frequencies $w(0,\cdot)$ of $a$-alleles at time $0$.
At time $T$, which we think of as `the present',
we sample $j$ individuals from locations 
$\chi_0^1,\ldots ,\chi_0^j$. Tracing backwards in time, 
we write $\chi_s^1,\ldots ,\chi_s^{N_s}$ 
for the locations of the $N_s$ {\em potential ancestors} that make up our 
dual at time $s$ before the present. 
\begin{definition}[Ancestral selection graph]
\label{dualprocessdefn}
We first define a
$\bigcup_{n\geq 1}(\IR^d)^n$-valued Markov process,
$((\chi^i_t)_{i=1}^{N_t})_{t\geq 0}$, enriched by `environmental marks' 
as follows:

At each $\tau\in \overleftarrow{\Pi}^{env}$ the environment is resampled;
  
At each event $(t,x,r,u)\in \overleftarrow{\Pi}^{neu}$,
\begin{enumerate}
\item for each $\chi_{t_-}^i\in B(x,r)$, independently mark 
the corresponding 
potential ancestor with probability $u$;
\item if at least one lineage is marked, 
all marked lineages disappear and are replaced by a single
potential ancestor, whose location is drawn uniformly at random 
from within $B(x,r)$.
\end{enumerate}
At each event $(t,x,r,u)\in \overleftarrow{\Pi}^{sel}$:
\begin{enumerate}
\item for each $\chi_{t_-}^i\in B(x,r)$, independently mark the corresponding
potential ancestor with probability $u$;
\item if at least one lineage is marked,
all marked lineages disappear and are replaced by {\em two} potential
ancestors, whose locations are drawn independently and uniformly from
within $B(x,r)$. The type of the environment is recorded.
\end{enumerate}
In both cases, if no particles are marked, then nothing happens.

To determine the distribution of types of a sample of the 
population $w(T,\cdot)$, taken from locations 
$\chi_0^1, \ldots, \chi_0^j$, knowing the distribution of $w(0,\cdot)$ at
time $T$ before the present, 
first evolve the process of branching and coalescing lineages until
time $T$.
At time $T$ assign types to $\chi_T^1,\ldots , \chi_T^{N_T}$ using
independent Bernoulli random variables such that 
$\IP[\mathtt{Type}(\chi_T^i)=a]=w(0,\chi_T^i)$. Tracing back through the system of
branching and coalescing lineages $(\chi^i_t)_{i=1}^{N_t}$, 
we define types recursively:
at each neutral event the lineages that coalesced during the event are assigned the
type of the parent; at a selective event, if $Z=1$ then all coalescing lineages are
type $a$ if and only if both parents are type $a$, otherwise they are type $A$ whereas
if $Z=-1$, all coalescing lineages are type $A$ if and only if both parents are type $A$,
otherwise they are type $a$.  The distribution of types at time zero is the desired
quantity.
\end{definition}
Since we only consider finitely many initial individuals in the sample, 
the jump rate in this process is finite and so this description gives rise to
a well-defined process. 

This dual process is the analogue for the SLFVFS of the Ancestral Selection Graph (ASG), 
introduced in the companion papers\cite{krone/neuhauser:1997},   \cite{neuhauser/krone:1997},
which describes all the potential ancestors 
of a sample from a population evolving according to the Wright-Fisher diffusion 
with selection. Indeed we could be more careful and use this process to extract the 
genealogy of a sample from the population. However, in this setting, 
this object seems to be rather
unwieldy and, under the scalings in which we are interested, it will not 
converge to a well-defined limit.

\begin{remark}
Informally, the procedure described above allows us to write down 
an expression, in terms of the marked process of branching and 
coalescing ancestral lineages and $w(0,\cdot)$,
for $\IE[\prod_{i=1}^jw(T,\chi_0^i)]$; that is the 
probability that $j$ individuals, sampled from the present day 
population at locations $\chi_0^1,\ldots, \chi_0^j$, are all of type $a$.  
More formally, since the density of the SLFVFS is only defined 
Lebesgue-almost everywhere, 
the quantities $w(T,\chi_0^i)$ are 
only defined for Lebesgue almost every choice of  
$\chi_0^1, \ldots ,\chi_0^{j}$ and, just as in \cite{etheridge/veber/yu:2014} Section~1.2,
this duality must be defined `weakly', that is by integrating against 
a suitable test function $\psi(\chi_0^1,\ldots, \chi_0^j)$. Also mirroring that 
setting, the resulting `moment duality'
is sufficient to guarantee uniqueness of the SLFVFS. 
Since we do not use the duality in what follows, we refer the reader 
to \cite{etheridge/veber/yu:2014} 
for details.
\end{remark}

\subsection{Scaling the SLFVFS}\label{statement of main result}

We are interested in the effects of fluctuating selection over large spatial
and temporal scales and so we shall consider a rescaling of our model.
 \cite{etheridge/veber/yu:2014} consider the corresponding process in which
selection does not fluctuate with time, but instead always favours type 
$A$ (say). In that setting it is shown that if impact scales as $u_n=\bar{u}/n^{1/3}$
and selection scales as ${\vs}_n={\vs}/n^{2/3}$, then as $n\to\infty$, 
$w(nt, n^{1/3}x)$ (or rather a local average of this quantity) converges
in $d\geq 2$ to the solution to the deterministic Fisher-KPP equation,
and to the solution of 
the corresponding stochastic p.d.e.~in which a `Wright-Fisher noise'
term, corresponding
to genetic drift, has been added in $d=1$. Here we wish to consider short
periods of stronger selection and so, by analogy with what we did in 
Section~\ref{nospace}, we choose ${\vs}_n={\vs}n^\alpha/n^{2/3}$ 
for some $\alpha>0$,
but we change the favoured type at times of mean $1/n^{2\alpha}$. This is
of course the scaling suggested by the Central Limit Theorem (and is the
natural analogue of our results in Section~\ref{nospace}).
In order to be able to ignore terms of order $n{\vs}_n^2u_n$, we must now 
take $\alpha\in (0,1/6)$.

We must also scale the environment in a consistent way. 
In the examples that we have in mind, environmental
correlations can be expected to extend over very large scales and so we
actually fix the correlations in the {\em limiting} environment by fixing
the distribution of a random field $\xi$ and at the $n$th stage of the 
scaling sampling the environment according to
$\xi_n$ determined by
\begin{align*}
\xi(x) = \xi_n(n^{-\frac{1}{3}}x).
\end{align*}
At the $n$th stage of the rescaling, the environment will be resampled
at points of a rate $n^{2\alpha}$ Poisson process.
\begin{remark}[Extensions]
\label{remark on environment}
We have taken selection to be constant in magnitude and just to vary in 
sign. This is not necessary, even for our scaling result. As
an obvious extension, we could fix the distribution
of  ${\vs}(x)\xi(x)$ and, at the
$n$th stage of the scaling define 
\begin{equation}
\nonumber
{\vs}_n(n^{-1/3}x)\xi_n(n^{-1/3}x)=\frac{n^\alpha}{n^{2/3}}\vs (x)\xi(x),
\end{equation}
At the expense of introducing an additional truncation of 
$\vs(x)$ at the
$n$th stage, to ensure that the $|\vs_n(x)\xi_n(x)|<1$,
it is enough to insist that
$$\IP[\vs(x)\xi(x)>z]=\IP[\vs(x)\xi(x)<-z], \quad \forall z\in\IR,$$
and
$$\IE[|\vs(x)\xi(x)|^{2+\epsilon}]<\infty, \qquad\mbox{for some }\epsilon>0,$$
plus some regularity to reflect equation~(\ref{regularity of g}) below.
To avoid a proliferation of notation, we omit this somewhat artificial
generalisation of our results.
\end{remark}
Our definition of the SLFVFS is still
rather general. We include it to underline the 
possibility of extending our results. However, in the interest of avoiding
even more complex expressions than those that follow, from now on we 
shall specialise to fix the radius and impact of reproduction events. 
\begin{assumption}
From now on, fix $R\in (0,\infty)$ and $\bar{u}\in (0,1)$ and take
$$ \mu({dr}) = \delta_{R}(\mathrm{d}r), 
\qquad \nu_{r}(\mathrm{d}u) = \delta_{\bar{u}}(\mathrm{d}u).$$
\end{assumption}

Just as in \cite{etheridge/veber/yu:2014}, we shall prove convergence, not
of the sequence of densities of the SLFVFS, but of a sequence of local averages.
We require some notation.
Let $R$ be the fixed radius of events. Set
\begin{align*}
R_{n}  = n^{-\frac{1}{3}}R, \quad B_{n}(x) = B(x,R_{n}),
\end{align*}
and 
define the sequence of rescaled processes  
\begin{align}\label{scaling3}
w_{n}(t,x) = w(nt,n^{\frac13}x), \quad \overline{w}_{n}(t,x) = \vint_{B_{n}(x)}w_{n}(t,y)\mathrm{d}y,
\end{align}
where $\vint_{B_{n}(x)}$ denotes an average integral over 
the ball $B_{n}(x)$.
We write $V_R$ for the volume of a ball of radius $R$.

\begin{theorem} \label{scaling result}
Write $(\overline{M}_{t}^{n})_{t\geq 0}$ for the measure-valued process with
density  $(\overline{w}_{n}(t))_{t\geq 0}$. Suppose that $(M_0^n)_{n\geq 1}$
converges weakly in ${\cal M}_{\lambda}$ 
to the measure $M_{0}^{\infty}$ with density 
$w^{\infty}(0,x) = \lim_{n\rightarrow\infty}\overline{w}_n(0,x)$. Further, fix $\alpha\in 
(0,1/6)$, set $\vs_n=\vs n^{\alpha}/n^{2/3}$, $u_n=\bar{u}/n^{1/3}$ and suppose that the environment
is resampled at the times of a Poisson process of rate $n^{2\alpha}$. We assume
that the correlation function $g(x,y)$ that determines the environment
satisfies
\begin{equation}
\label{regularity of g}
|g(x,x)-g(x,y)|\leq C|x-y|\qquad\mbox{ for all }x,y\in\IR^d.
\end{equation}
Then the sequence
$(\overline{M}_\cdot^n)_{n\geq 1}$ is tight in 
$D([0,\infty), {\cal M}_\lambda)$ (the space of c\`adl\`ag functions on $[0,\infty)$ taking values in  ${\cal M}_\lambda$). Moreover, for any
weak limit point $(M_t^\infty)_{t \geq 0}$, writing $w^\infty$ for a 
representative of the density of $M^\infty$,
\begin{enumerate}
\item for dimension $d = 1$, $w^{\infty}$ is the  process for which, 
for every $F \in C_{c}^{\infty}\left( \mathbb{R}\right)$ and for 
every $f \in C_{c}^{2}\left( \mathbb{R}\right)$,
\begin{multline}\label{Martingale Problem for original model 1d}
F\left(\langle w^{\infty}(t),f \rangle\right) - F\left(\langle w^{\infty}(0),f \rangle\right) \\
-\int_{0}^{t} F^{\prime}\left(\langle w^{\infty}(s),f \rangle\right)
\left\lbrace\left\langle w^{\infty}(s),\frac{\bar{u}\Gamma_R}{2}\Delta f\right\rangle \right.
 \phantom{V_R^2\bar{u}^2{\vs}^{2}\langle {w}^{\infty}(s)(1-{w}^{\infty}(s))\langle w^{\infty}(s)}
 \\
\left. \phantom{\left\langle \frac{\Gamma_R}{2}\right.}
 + V_R^2\bar{u}^2{\vs}^{2}\langle {w}^{\infty}(s)(1-{w}^{\infty}(s))
(1-2{w}^{\infty}(s)),f \rangle \right\rbrace \mathrm{d}s\\
- \int_{0}^{t} F^{\prime\prime}\left(\langle w^{\infty}(s),f \rangle\right) \left\lbrace
\phantom{\int_{\mathbb{R}^{d}}\int_{\mathbb{R}^{d}} V_R^2\bar{u}^{2}{\vs}^{2} g(x,y)}\right.
\phantom{\int_{\mathbb{R}^{d}}
w^{\infty}(s,y)w^{\infty}(s,y)}
\\
\left.\int_{\mathbb{R}^{d}}\int_{\mathbb{R}^{d}} \left[V_R^2\bar{u}^{2}{\vs}^{2} g(x,y) w^{\infty}(s,x)(1 - w^{\infty}(s,x)) \right.\right. \phantom{\int_{\mathbb{R}^{d}}}
\\
\left.\left. \phantom{V_{R}}\phantom{\int_{\mathbb{R}^{d}}}
\times w^{\infty}(s,y)(1 - w^{\infty}(s,y))f(x)f(y) \right]\mathrm{d}x\mathrm{d}y \right. \\
 \left. \phantom{\int_{\mathbb{R}^{d}}} + \frac{\bar{u}^2V_{R}^2}{2}\langle w^{\infty}(s)(1 - w^{\infty}(s)),f^2 \rangle \right\rbrace \mathrm{d}s
\end{multline}
is a martingale;
\item for dimension $d \geq 2$,  
for every  $F \in C_{c}^{\infty}\left( \mathbb{R}\right)$ and 
for every $f \in C_{c}^{2}\left( \mathbb{R}^{d}\right)$,
\begin{multline}\label{Martingale Problem for original model nd}
F\left(\langle w^{\infty}(t),f \rangle\right) - F\left(\langle w^{\infty}(0),f \rangle\right) \\
-\int_{0}^{t} F^{\prime}\left(\langle w^{\infty}(s),f \rangle\right)
\left\lbrace\left\langle w^{\infty}(s),\frac{\bar{u}\Gamma_R}{2}\Delta f\right\rangle \right.
 \phantom{V_R^2\bar{u}^2{\vs}^{2}\langle {w}^{\infty}(s)(1-{w}^{\infty}(s))\langle w^{\infty}(s)}
 \\
\left. \phantom{\left\langle \frac{\Gamma_R}{2}\right.}
 + V_R^2\bar{u}^2{\vs}^{2}\langle {w}^{\infty}(s)(1-{w}^{\infty}(s))
(1-2{w}^{\infty}(s)),f \rangle \right\rbrace \mathrm{d}s\\
- \int_{0}^{t} F^{\prime\prime}\left(\langle w^{\infty}(s),f \rangle\right) \left\lbrace
\phantom{\int_{\mathbb{R}^{d}}\int_{\mathbb{R}^{d}} V_R^2\bar{u}^{2}{\vs}^{2} g(x,y)}\right.
\phantom{\int_{\mathbb{R}^{d}}
w^{\infty}(s,y)w^{\infty}(s,y)}
\\
\left.\int_{\mathbb{R}^{d}}\int_{\mathbb{R}^{d}} \left[V_R^2\bar{u}^{2}{\vs}^{2} g(x,y) w^{\infty}(s,x)(1 - w^{\infty}(s,x)) \right.\right. \phantom{\int_{\mathbb{R}^{d}}}
\\
\left.\left. \phantom{V_{R}}\phantom{\int_{\mathbb{R}^{d}}}
\times w^{\infty}(s,y)(1 - w^{\infty}(s,y))f(x)f(y) \right]\mathrm{d}x\mathrm{d}y \right\rbrace \mathrm{d}s
\end{multline}
is a martingale.  Moreover, the solution to this martingale problem is unique 
and so $\{\overline{M}^n\}_{n\geq 1}$ actually converges.
\end{enumerate}
The constant $\Gamma_R$ depends only on $R$ and is defined 
in~\eqref{defn of Gamma}.
\end{theorem}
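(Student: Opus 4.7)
I would follow the template of Section~\ref{nospace}: compute the generator $\mathcal{L}^{(n)}$ of the pair $(w_n,\zeta_n)$ at the $n$th stage of the rescaling, apply it to a test function of the form $F(\langle w,f\rangle)$ perturbed by a correction of order $n^{-\alpha}$ chosen to kill the large $n^\alpha$-term coming from the fluctuating selection, and then invoke the averaging theorem of \cite{kurtz:1992} (recalled later as Theorem~\ref{kurtzaveraging}) to identify any subsequential limit as a solution of the stated martingale problem. Tightness of $\{\overline{M}^n\}$ in $D([0,\infty),\mathcal{M}_\lambda)$ is immediate from compactness of $\mathcal{M}_\lambda$ in the vague topology; tightness of each real-valued functional $\langle\overline{w}_n,f\rangle$ will follow from Aldous-Rebolledo, using the generator bounds of the next step to control the predictable part and the quadratic variation.

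\textbf{Rescaled generator.} For $F\in C_c^\infty(\mathbb{R})$ and $f\in C_c^2(\mathbb{R}^d)$, a Taylor expansion of $F$ in the jumps of $\langle w_n,f\rangle$, combined with the expansions of the selection probabilities in $\vs_n$ as in~\eqref{rescaled generratarot for NSLFV after Taylor expansion}, yields a decomposition
\[
\mathcal{L}^{(n)} \;=\; \mathcal{L}^{neu,n} + n^{\alpha}\mathcal{L}^{fsel,n} + n^{2\alpha}\mathcal{L}^{env} + E_n,
\]
with the error $E_n$ vanishing uniformly on compact $F$-level sets because $\alpha<1/6$ forces the residual $n\vs_n^2 u_n$-terms to zero. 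After averaging $f$ over $B_n(x)$ and Taylor-expanding $f$ around $x$ to second order, the neutral piece $\mathcal{L}^{neu,n}$ produces the Laplacian drift $(\bar u\Gamma_R/2)\langle w,\Delta f\rangle F'$ with $\Gamma_R$ as in~\eqref{defn of Gamma}; it also produces the genetic-drift contribution $(\bar u^2 V_R^2/2)\langle w(1-w),f^2\rangle F''$, but only in $d=1$, because in higher dimensions the prefactor of this Wright-Fisher term vanishes in the scaling limit exactly as in \cite{etheridge/veber/yu:2014}.

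\textbf{The averaging trick and the $g$-kernel.} Writing $\pi$ for the uniform distribution on $\{-1,+1\}$ and noting that $\mathbb{E}_\pi[\mathcal{L}^{fsel,n}F]=0$, I would apply $\mathcal{L}^{(n)}$ to the perturbed test function
\[
\widetilde F(w,\zeta) \;=\; F(\langle w,f\rangle) + n^{-\alpha}\mathcal{L}^{fsel,n}F(w,\zeta),
\]
mirroring~\eqref{test_function_g}. The offending $n^\alpha\mathcal{L}^{fsel,n}F$ is then cancelled by $-n^{2\alpha}\cdot n^{-\alpha}\mathcal{L}^{env}\mathcal{L}^{fsel,n}F$, while the order-one survivor $\mathcal{L}^{fsel,n}(\mathcal{L}^{fsel,n}F)$ delivers the remaining terms of the limiting martingale problem: its $F'$-part gives the cubic drift $V_R^2\bar u^2\vs^2\langle w(1-w)(1-2w),f\rangle F'$ (from the diagonal contribution, where $\zeta(x)^2=1$), and its $F''$-part gives the bilinear coloured-noise term
\[
\int_{\mathbb{R}^d}\!\!\int_{\mathbb{R}^d} V_R^2\bar u^2\vs^2\, g(x,y)\,w(x)(1-w(x))w(y)(1-w(y))f(x)f(y)\,\mathrm{d}x\,\mathrm{d}y,
\]
in which the kernel $g$ appears because the two nested selective contributions are driven by the same random environment, with $\mathbb{E}[\zeta(x)\zeta(y)]=g(x,y)$. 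Passing to the limit in the Kurtz averaging theorem then identifies every subsequential limit as a solution of~\eqref{Martingale Problem for original model 1d} or~\eqref{Martingale Problem for original model nd}, according to dimension. For $d\geq 2$, uniqueness of this martingale problem corresponds to uniqueness for an SPDE with bounded Lipschitz drift and spatially smooth (coloured) multiplicative noise, which follows from a standard Picard iteration in a weighted $L^2$-space, upgrading the tightness to genuine convergence of the sequence $\{\overline{M}^n\}$.

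\textbf{Main obstacle.} The delicate technical step is the analysis of $(\mathcal{L}^{fsel,n})^2$: one must carefully partition the two nested selective events into three regimes—events at exactly the same location (producing the cubic drift via $g(x,x)=1$), events at nearby but distinct locations inside overlapping balls of radius $R_n$ (a correction which must be shown to vanish as $R_n\to 0$, precisely where the Lipschitz bound~\eqref{regularity of g} is used), and events at well-separated locations (producing the $g(x,y)$-kernel term)—and verify that the three contributions combine into the stated bilinear form. A secondary obstacle is establishing uniform-in-$n$ control of the error $E_n$ and of the perturbation $n^{-\alpha}\mathcal{L}^{fsel,n}F$ for nonlinear $F$; this relies on moment and regularity bounds of the kind assembled in the appendices of the paper.
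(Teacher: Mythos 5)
Your overall strategy---Kurtz's averaging theorem applied to the perturbed test function $F(\langle w_n,\phi_f\rangle)+n^{-\alpha}\mathcal{L}^{fsel}_nF$, with the neutral part yielding the $\Gamma_R$-Laplacian and (in $d=1$ only) the Wright--Fisher term, and $\mathcal{L}^{fsel}_n\big(\mathcal{L}^{fsel}_nF\big)$ yielding the cubic drift plus the $g(x,y)$ quadratic-variation term, the Lipschitz bound \eqref{regularity of g} entering in the overlapping-balls regime---is exactly the paper's. However, there is a genuine gap in the identification of the limit. Tightness in $D([0,\infty),\mathcal{M}_\lambda)$ is indeed free, but the drift and quadratic-variation terms of the limiting martingale problem involve $\langle\overline{w}_n^{\,2},f\rangle$ and $\langle\overline{w}_n^{\,3},f\rangle$, and these are \emph{not} continuous functionals of the measure $\overline{M}^n$ in the vague topology; convergence of the measure-valued processes therefore does not let you pass to the limit in these terms, and Aldous--Rebolledo applied to $\langle\overline{w}_n,f\rangle$ does not repair this. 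The paper needs Proposition~\ref{continuity proposition}, proved in Appendix~\ref{continuity estimates} via the Mueller--Tribe-type modulus-of-continuity estimate \eqref{ctty eq} for $\overline{w}_n$ (obtained by testing against approximate heat kernels); the same estimate is also what controls the $\overline{w}_n(s,x)-\overline{w}_n(s,y)$ integral in $\epsilon_n^f$. Your proposal never supplies this ingredient, and without it the limit points cannot be identified as solutions of \eqref{Martingale Problem for original model 1d} or \eqref{Martingale Problem for original model nd}.

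There is a second gap in the uniqueness argument for $d\geq 2$. You assert that uniqueness of the martingale problem ``corresponds to'' uniqueness for an SPDE and then propose a Picard iteration. The correspondence is itself the nontrivial step: one must show that \emph{any} solution of the martingale problem \eqref{Martingale Problem for original model nd} can be realised as a weak solution of \eqref{Limiting equation nd}, i.e.\ one must construct the driving coloured noise $W$ from a martingale-problem solution; the paper does this in Appendix~\ref{mgpspde} via Kurtz's Markov Mapping Theorem. Moreover, pathwise uniqueness of the SPDE---whether obtained by Picard/Gronwall (legitimate here, and arguably simpler, since the coefficients are Lipschitz on $[0,1]$) or by invoking Rippl--Sturm as the paper does---must still be upgraded to uniqueness in law via a Yamada--Watanabe theorem before it yields uniqueness of the martingale problem. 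Both of these steps are absent from your argument.
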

The proof of uniqueness in $d\geq 2$ uses a pathwise  uniqueness result of  \cite{rippl/sturm:2013}
for a corresponding stochastic p.d.e.. In Appendix~\ref{mgpspde}, we follow
the approach of  \cite{kurtz:2010}, which uses the Markov Mapping 
Theorem, to show that any solution to the
martingale problem~(\ref{Martingale Problem for original model nd})
is actually a weak solution to the stochastic p.d.e.:
\begin{multline}\label{Limiting equation nd}
\mathrm{d}w^{\infty}  = 
\left(\frac{\bar{u}\Gamma_R}{2}\Delta w^{\infty} + 
\bar{u}^{2}V_R^2{\vs}^{2}w^{\infty}(1 - w^{\infty})(1 - 2w^{\infty})\right)\mathrm{d}t 
\\+   
\sqrt{2}\bar{u}V_R{\vs}w^{\infty}(1 - w^{\infty})W(\mathrm{d}t,\mathrm{d}x), 
\end{multline}
where the noise $W$ is white in time and coloured in space,
with quadratic variation given by 
\begin{equation}
\langle W(\phi)\rangle_t = t\int_{\mathbb{R}^{d}}\int_{\mathbb{R}^{d}}g(x,y)\phi(x)\phi(y)\mathrm{d}x\mathrm{d}y.
\end{equation}
The corresponding equation for dimension $d=1$ is
\begin{multline}\label{Limiting equation 1d}
dw^{\infty}  = \left(\frac{\bar{u}\Gamma_R}{2}
\Delta w^{\infty} + 4R^2 \bar{u}^{2}{\vs}^{2}w^{\infty}(1 - w^{\infty})
(1 - 2w^{\infty})\right)dt
\\
 +   
\sqrt{2}2R\bar{u}{\vs}w^{\infty}(1 - w^{\infty})W(\mathrm{d}t,\mathrm{d}x) + 
2R\bar{u}\sqrt{w^{\infty}(1 - w^{\infty})}{\mathcal{W}}(\mathrm{d}t,\mathrm{d}x), 
\end{multline}
where $W$ is white in time and coloured in space as above and 
${\mathcal{W}}$ is a space-time white noise.
As in $d\geq 2$, any solution to the martingale 
problem~(\ref{Martingale Problem for original model 1d})
will be a weak solution to this stochastic p.d.e., but we only have a 
proof of uniqueness of~(\ref{Limiting equation 1d})
in the special case in which
${W}$ is also space-time white noise (in which case we can 
invoke the duality of Section~\ref{duality}).

\section{Duality}
\label{duality}

In general, we have been unable to identify a useful dual process for our
limiting equation. The exceptions are the non-spatial setting and the special 
case of one spatial dimension, with both noises in the stochastic p.d.e.~being
white in space as well as time. In order to obtain these duals, we transform
the system in a way inspired by \cite{blath/etheridge/meredith:2007}.

Consider first the non-spatial case. We rewrite 
equation~\eqref{non-spatial SPDE approximation} by
making the substitution $X=1-2p$. 
The process $X$, which
takes values in $[-1,1]$, satisfies
\begin{align}
\mathrm{d}X_{t} =
\frac12\bar{u}^{2}{\vs}^{2}\left(X_t^{3} - X_t\right)\mathrm{d}t
+\bar{u}\sqrt{1 - X_{t}^{2}} \mathrm{d}B_t^1 + \frac{\sqrt{2}}{2}\bar{u}{\vs}
(1 - X_{t}^{2})\mathrm{d}B_{t}^2, 
\label{transformed sde}
\end{align}
for independent Brownian motions $B^1$, $B^2$.

\begin{lemma}
The solution to the transformed equation~\eqref{transformed sde} is 
dual to a branching annihilating process $\{N_t\}_{t\geq 0}$ with
transitions 
\begin{enumerate}
\item $N \mapsto N+ 2$ at rate 
\begin{align*}
\frac{\bar{u}^2{\vs}^2}{2}\left(N+\binom{N}{2}\right);
\end{align*}
\item For $N\geq 2$, $N\mapsto N-2$ at rate
\begin{align*}
\bar{u}^{2}(1+\frac{{\vs}^{2}}{2})\binom{N}{2}. 
\end{align*}
\end{enumerate}
The duality relationship takes the form
\begin{align*}
\mathbb{E}_{X_0}\left[X_{t}^{N_{0}} \right] = 
\mathbb{E}_{N_0}\left[X_{0}^{N_{t}} \right]
\end{align*}
where the expectation on the left is with respect to the law of
$\{X_t\}_{t\geq 0}$ started from initial condition $X_0$,
and that on the right is with respect to the 
law of $\{N_t\}_{t\geq 0}$, started from $N_0$.
\end{lemma}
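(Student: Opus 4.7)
The standard route is generator matching: I will set $H(x,n)=x^{n}$ and verify that the forward generator $\mathcal{L}_{X}$ of the SDE, acting on $H$ in its first argument, coincides as a polynomial in $x$ with the backward generator $\mathcal{L}_{N}$ of the jump process, acting on $H$ in its second argument. Once this identity is in hand, an appeal to the classical moment-duality recipe (Ethier--Kurtz, Theorem~4.4.11 / Corollary~4.4.13, or the version in Jansen--Kurtz) yields the stated duality.

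Concretely, first I would apply It\^o's formula to equation~\eqref{transformed sde}. Reading off the diffusion coefficient $a(x)=\bar{u}^{2}(1-x^{2})+\tfrac{1}{2}\bar{u}^{2}\vs^{2}(1-x^{2})^{2}$ and the drift $b(x)=\tfrac{1}{2}\bar{u}^{2}\vs^{2}(x^{3}-x)$, one gets
\begin{align*}
\mathcal{L}_{X}H(\cdot,n)(x) &= \tfrac{1}{2}\bar{u}^{2}\vs^{2}\,n\bigl(x^{n+2}-x^{n}\bigr) + \tfrac{1}{2}\bar{u}^{2}n(n-1)\bigl(x^{n-2}-x^{n}\bigr) \\
&\qquad + \tfrac{1}{4}\bar{u}^{2}\vs^{2}n(n-1)(1-x^{2})^{2}x^{n-2},
\end{align*}
and expanding $(1-x^{2})^{2}=1-2x^{2}+x^{4}$ and collecting the coefficients of $x^{n+2}$, $x^{n}$, $x^{n-2}$ gives precisely
\begin{align*}
\mathcal{L}_{N}H(x,\cdot)(n) = \tfrac{\bar{u}^{2}\vs^{2}}{2}\!\left(n+\tbinom{n}{2}\right)\!(x^{n+2}-x^{n}) + \bar{u}^{2}\!\left(1+\tfrac{\vs^{2}}{2}\right)\!\tbinom{n}{2}(x^{n-2}-x^{n}),
\end{align*}
using the identity $n+\binom{n}{2}=\binom{n+1}{2}$ to match the $x^{n+2}$ terms. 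This is the key algebraic step and is essentially a direct calculation.

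Given the generator identity, the duality follows from the standard argument: fix $T>0$, run $X$ and $N$ independently, and consider $\Phi(s):=\mathbb{E}\bigl[X_{s}^{N_{T-s}}\bigr]$ for $s\in[0,T]$. By It\^o on $X_{s}^{n}$ (for fixed $n$) and by the jump-generator formula on $x^{N_{T-s}}$ (for fixed $x$), together with the generator matching above, the time derivative of $\Phi$ vanishes. Evaluating at $s=0$ and $s=T$ yields $\mathbb{E}_{N_{0}}[X_{0}^{N_{T}}]=\mathbb{E}_{X_{0}}[X_{T}^{N_{0}}]$, which is the claim. Because $|H(x,n)|\le 1$ for $x\in[-1,1]$, the boundedness hypotheses in Ethier--Kurtz are trivially satisfied, so all exchanges of expectation and integration/derivation are legitimate.

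The one point requiring care, and the main technical obstacle, is non-explosion of $\{N_{t}\}$, since both jump rates grow quadratically in $N$. Here the structure of the rates saves us: the infinitesimal mean drift of $N$ equals
\begin{equation*}
2\cdot\tfrac{\bar{u}^{2}\vs^{2}}{2}\!\left(N+\tbinom{N}{2}\right) - 2\bar{u}^{2}\!\left(1+\tfrac{\vs^{2}}{2}\right)\!\tbinom{N}{2} = \bar{u}^{2}\vs^{2}N - \bar{u}^{2}N(N-1),
\end{equation*}
which is strongly negative for large $N$ (recall $\vs\in[0,1]$). A standard Lyapunov/Foster argument with $V(n)=n$ (or $V(n)=\log(1+n)$) then gives non-explosion and in fact positive recurrence, which is all that is needed to make the dual process a well-defined c\`adl\`ag Markov chain on $\mathbb{N}_{0}$ and to apply the generator matching globally in time.
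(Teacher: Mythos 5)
Your proposal is correct and follows the same route as the paper, whose entire proof is the single sentence ``The proof is an application of It\^o's formula'': you apply It\^o to $x^n$, verify that the generator of $X$ acting on $x^n$ equals the jump generator of $N$ acting on $n\mapsto x^n$ (your coefficient bookkeeping for $x^{n+2}$, $x^n$, $x^{n-2}$ checks out), and invoke the standard moment-duality argument. Your additional attention to non-explosion of $N$ via the negative mean drift $\bar{u}^2\vs^2 N-\bar{u}^2N(N-1)$ is a point the paper glosses over (it only remarks afterwards that the dual dies out by comparison with a subcritical birth-death process), and it is a welcome completion rather than a divergence.
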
 
The proof is an application of It\^o's formula.
It is easy to see that started from an even number of particles,
the dual process will die out in finite time,
(count the number of pairs of particles and compare to
a subcritical birth-death process),
corresponding to the process $\{p_t\}_{t\geq 0}$ of allele frequencies
being absorbed in either zero or one. Of course, this is 
also readily checked directly for the diffusion~(\ref{basic sde})
using the theory of speed and scale, 
but if we could find an analogous dual for a spatially extended
population, where the theory of one dimensional diffusions is no 
longer helpful, we might be able to exploit it to study the 
behaviour of allele frequencies.

In one spatial dimension, if the noises ${W}$ and ${\mathcal W}$
are both white in space as well as time, then we can extend this.
\begin{lemma}
\label{duality lemma}
Suppose that $d=1$ and $w^\infty$ solves
\begin{multline*}
\mathrm{d}w^{\infty}  = \left(\frac{\bar{u}\Gamma_R}{2}
\Delta w^{\infty} + 4R^2 \bar{u}^{2}{\vs}^{2}w^{\infty}(1 - w^{\infty})
(1 - 2w^{\infty})\right)\mathrm{d}t + \\   
\sqrt{2}2R\bar{u}{\vs}w^{\infty}(1 - w^{\infty})W(\mathrm{d}t,\mathrm{d}x) + 
2R\bar{u}\sqrt{w^{\infty}(1 - w^{\infty})}{\mathcal{W}}(\mathrm{d}t,\mathrm{d}x), 
\end{multline*}
where $W$ and ${\mathcal{W}}$ are independent space-time white noises.
Then setting $X_t(x)=1-2w^{\infty}(t,x)$, $\{X_t(x)\}_{t\geq 0}$ is dual
to a system of branching-annihilating Brownian particles 
whose spatial
locations at time $t$ we denote by 
$\chi_1,\ldots,\chi_{N_t}$, and whose dynamics are described as follows:
\begin{enumerate}
\item Each particle, independently, follows a Brownian motion in $\IR$, with
diffusion constant $\bar{u}\Gamma_R$;
\item Each particle, independently, splits into three at rate 
$\bar{u}\vs^2/2$;
\item Each pair of particles annihilates, at
a rate $\bar{u}^2(1+{\vs}^2/2)$, measured by their intersection local time;
\item Each pair of particles replicates (i.e.~is replaced by two 
identical pairs) at rate 
$\bar{u}^{2}{\vs}^2)/2$, also measured by their intersection local time.
\end{enumerate}
The duality is expressed for points $\chi_1(0),\ldots \chi_{N_0}(0)
\in \IR$ and any continuous function $w^\infty(0,\cdot):\IR\rightarrow [0,1]$,
through
\begin{equation}
\label{spatial duality}
\IE\left[\prod_{i=1}^{N_0}w^\infty_{t}(\chi_{i}(0))\right]=
\IE\left[\prod_{i=1}^{N_t}w^\infty_{0}(\chi_{i}(t))\right],
\end{equation}
where the expectation on the left is with respect to the law of the 
stochastic p.d.e.~and that on the right with respect to the law of 
the dual system of branching and annihilating lineages. 
\end{lemma}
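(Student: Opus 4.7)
The plan is to establish the stated moment duality by a standard generator-matching argument, which reduces to a (formal) It\^o's formula computation of exactly the same flavour as the proof sketched for the non-spatial case. The key preparatory step is the substitution $X_t(x)=1-2w^\infty(t,x)$, which via It\^o's formula transforms \eqref{Limiting equation 1d} into
\begin{equation*}
dX_t=\Bigl(\tfrac{\bar u\Gamma_R}{2}\Delta X_t+2R^2\bar u^2\vs^2\,X_t(X_t^2-1)\Bigr)dt+\sqrt{2}\,R\bar u\vs(1-X_t^2)W(dt,dx)+2R\bar u\sqrt{1-X_t^2}\,\mathcal{W}(dt,dx)
\end{equation*}
(signs absorbed into the symmetric noises). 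Taking the natural duality function
\begin{equation*}
H\bigl(X,(\chi_1,\ldots,\chi_n)\bigr)=\prod_{i=1}^n X(\chi_i),
\end{equation*}
the identity \eqref{spatial duality} is equivalent, after substituting $w^\infty=(1-X)/2$ into each factor on both sides and collecting like terms, to the cleaner statement $\IE[H(X_t,\chi(0))]=\IE[H(X_0,\chi(t))]$; I would aim to prove the latter.

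Next, I would match the forward generator of $X$ applied to $H$ (with $\chi_i$ fixed) against the backward generator of the particle system applied to $H$ (with $X$ fixed). The Laplacian in $X$ contributes $\sum_i\tfrac12\bar u\Gamma_R\Delta_{\chi_i}H$, matching the independent Brownian motions of diffusivity $\bar u\Gamma_R$ in the dual. The cubic drift contributes $\sum_i 2R^2\bar u^2\vs^2(X(\chi_i)^3-X(\chi_i))\prod_{k\neq i}X(\chi_k)$, matching the triple-split operation $X(\chi_i)\mapsto X(\chi_i)^3$. The remaining, most delicate, contributions come from the It\^o quadratic-variation correction associated with the two independent space-time white noises: after expanding $(1-X^2)^2=1-2X^2+X^4$ and summing the $W$- and $\mathcal{W}$-contributions, the delta-correlated terms supported on coincidence pairs $\chi_i=\chi_j$ split into a $(1-X(\chi)^2)$ coefficient matching pair-annihilation and an $(X(\chi)^4-X(\chi)^2)$ coefficient matching pair-replication; the Dirac mass $\delta(\chi_i-\chi_j)$ on the SPDE side corresponds to the intersection local time of the two dual Brownian particles. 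With generator matching in hand on the algebra generated by functionals of type $H$, the duality follows from the standard argument of differentiating $s\mapsto\IE[H(X_s,\chi(t-s))]$ and showing the derivative vanishes, so that the boundary values at $s=0$ and $s=t$ coincide.

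The principal obstacle is analytic regularity. In $d=1$, $w^\infty$ is at best H\"older continuous (of order $1/4-$ by the usual parabolic regularity for equations driven by a Wright--Fisher noise), so pointwise products $\prod X(\chi_i)$ are not literally defined and the It\^o calculation above must be interpreted distributionally. Following the approach to duality for the classical Wright--Fisher SPDE by Shiga and by Athreya--Tribe, I would prove the identity first in weak form by integrating $H(X_t,\cdot)$ against a continuous test function $\psi(\chi_1,\ldots,\chi_n)$ on $\IR^n$, identify the It\^o correction with the intersection local time of the branching-annihilating Brownian particles (which exists precisely because $d=1$), and then recover the stated identity in the same weak sense in which moment dualities are formulated for the SLFV in \cite{etheridge/veber/yu:2014}. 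The restriction of the lemma to $d=1$ and to a space-time white $W$ is dictated precisely by the need for a well-defined intersection local time and for the noise covariance to be Dirac-correlated.
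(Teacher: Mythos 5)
Your proposal is correct and follows essentially the route the paper intends: the paper gives no detailed proof of this lemma, offering only the It\^o/generator-matching computation (as in the non-spatial case, where it says ``the proof is an application of It\^o's formula'') and deferring the technical treatment of coincidence terms and intersection local times to Shiga, Tribe and Doering--Mueller--Smereka --- precisely the programme you lay out, and your generator bookkeeping (cubic drift $\leftrightarrow$ triple splitting, $(1-X^2)$ $\leftrightarrow$ annihilation, $(1-X^2)^2=(1-X^2)+(X^4-X^2)$ $\leftrightarrow$ annihilation plus replication) reproduces the rates of the non-spatial dual. One small caveat: the natural duality function for the branching--annihilating system is $\prod_i X(\chi_i)$ (as in the non-spatial lemma), and the displayed relation \eqref{spatial duality}, written in terms of $w^\infty$, is not literally recovered from the $X$-version by ``collecting like terms'' because $N_t$ is random; you are proving the correct ($X$-valued) statement, which is evidently what the lemma means.
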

 \cite{tribe:1995} gives a construction of the analogous system of 
coalescing Brownian motions, which is dual to the stochastic heat
equation with Wright-Fisher noise, as discussed in 
 \cite{shiga:1988}.
\cite{doering/mueller/smereka:2002} 
provide a complete derivation in that context;  
see also  \cite{liang:2009}. 
We also note that \cite{birkner:2003} considers a similar 
system of branching random walkers
on $\IZ^d$, in which particles reproduce at a rate that depends
on the number of other particles within the same site.  
\begin{remark}
\label{what goes wrong}
If we consider subdivided populations
(i.e. an analogous model on a lattice),
 then the analogous system of 
stochastic (ordinary) differential equations satisfies a duality of this
form with a system of branching and annihilating random walks.
Moreover, we can apply the analogous transformation to the stochastic 
p.d.e.~with coloured noise, but now, when we seek a branching annihilating dual 
process, in addition to the branching annihilating term when dual
particles meet, we obtain cross terms of the form
$$g(x,y)X_t(x)X_t(y)\Big(1-X_t(x)^2\Big)\Big(1-X_t(y)^2\Big).$$
We can rearrange the factors involving $X$ as 
\begin{multline}
\label{failed duality}
\left(X_t(x)^2X_t(y)^2-X_t(x)X_t(y)\right)+
\left(1-X_t(x)X_t(y)\right)
\\-
\left(X_t(x)^2-X_t(x)X_t(y)\right)
-\left(X_t(y)^2-X_t(x)X_t(y)\right).
\end{multline}
If $g(x,y)>0$, then we can interpret the first two terms 
in~(\ref{failed duality}) as branching and annihilating terms
in a putative dual; if
$g(x,y)<0$, then the last two terms can be interpreted as one
particle jumping to the location of another. However, we have not, in
either case, found a way to interpret both terms simultaneously.
The obvious approach
is to follow \cite{athreya/tribe:2000} and introduce an additional
`marker' that switches sign every time we have an event of 
`the wrong sign'. This leads to a 
Feynman-Kac correction term in the duality 
relation~(\ref{spatial duality}), which it turns out is infinite.
\end{remark}


\section{Tracer dynamics}
\label{tracers}

On passing to a stochastic p.d.e.~limit, we have lost sight of the way in
which individuals in the population were related to one another
and the ancestral selection graphs which encoded that information in
the prelimiting models do not converge. 
However, some information about heredity can be recovered using the notion
of `tracers'. The idea, which finds its roots in 
the statistical physics literature, has more recently found application in 
models of population genetics, notably in  \cite{hallatschek/nelson:2008},
or, for a more mathematical approach, see   \cite{durrett/fan:2016}.
The idea is simple: one labels some portion of the population of type 
$a$ individuals, say, at time zero not just according to their type at
the selected locus, but also with a `neutral marker' that is passed 
down from parent to offspring. Individuals in the population at time
$t$ that carry the neutral marker are precisely the 
descendants of our original marked individuals.

To introduce this in our setting, let us label a portion of the 
$a$ population at time zero by a neutral marker. We shall use $v(t,x)$
to denote the proportion of the (total) population that are both type
$a$ and labelled, and we shall use $a^*$ to denote that 
combined type. Thus
$$\{\mbox{type }a^*\mbox{ individuals}\}\subseteq  
\{\mbox{type }a\mbox{ individuals}\}$$
and $v(t,x)\leq w(t,x)$. 

The dynamics are driven by the same Poisson point processes of events as
before, but now we modify our description of inheritance to include
the extra label.
\begin{definition}[The SLFVFS with tracers]
Let $\zeta$, $\Pi^{neu}$, $\Pi^{fsel}, \Pi^{env}$ be exactly as in 
Definition~\ref{SLFVSFE_def}.
The dynamics of the pair $(v,w)$ can be described as follows.
Write $a^\dagger$ to denote individuals of type $a$, but not $a^*$.
\begin{enumerate}
\item If $(t,x,r,\bar{u}) \in \Pi^{neu}$, a neutral event occurs at 
time $t$ within the closed ball $B(x,r)$. Then:
\begin{enumerate}
\item Choose a parental location $l$ according to 
the uniform distribution over $B(x,r)$. 
\item Choose the parental type $\kappa \in \{0,1\}$ according to distribution
\begin{align*}
\mathbb{P}\left[\kappa = a^{*} \right] &= v (t_{-},l),
\quad \mathbb{P}\left[\kappa = a^{\dagger} \right] = w (t_{-},l) - v (t_{-},l),
\\
\mathbb{P}\left[\kappa = A \right] &= 1 -  w (t_{-},l).
\end{align*}
\item For each $y \in B(x,r)$, 
\begin{align*}
v(t,y) &= (1-\bar{u})v(t_{-},y) + \bar{u}\ind_{\{\kappa = a^{*}\}}(y);\\
w(t,y) &= (1-\bar{u})w(t_{-},y) + 
\bar{u}\ind_{\{\kappa \in\{ a^{*}, a^{\dagger}\}\}}(y).
\end{align*}
\end{enumerate}
\item If $(t,x,r,\bar{u}) \in \Pi^{fsel}$, a   selective event  occurs at time $t$ within the closed ball $B(x,r)$. Then:
\begin{enumerate}
\item Choose the two parental locations $l_{0}, l_{1}$ independently, 
according to the uniform distribution on $B(x,r)$.
\item Choose the two parental types, $\kappa_{0}, \kappa_{1}, $ according to  
\begin{align*}
\mathbb{P}\left[\kappa_{i} = a^{*} \right] &= v (t_{-},l_{i}),
\quad \mathbb{P}\left[\kappa_i = a^{\dagger} \right] = 
w (t_{-},l_{0}) - v (t_{-},l_{0}),\\
\mathbb{P}\left[\kappa_{i} = A \right] 
&= 1 -  w (t_{-},l_{i}).
\end{align*}
\item 
\begin{enumerate}
\item If  $\zeta(t,x)=1$, offspring inherit type $\kappa_0$ if $\kappa_0$ and
$\kappa_1$ are both type $a$ (with or without the neutral marker), 
otherwise they are type $A$; so
\begin{align*}
v(t,y) & = (1 - \bar{u})v(t_{-},y)  + \bar{u}\ind_{\{\kappa_{0} = a^*, 
\kappa_{1}\in\{a^*, a{\dagger}\}\}}(y) \\
w(t,y) & = (1 - \bar{u})w(t_{-},y)  + \bar{u}
\ind_{\{\kappa_{0},\kappa_{1}\in\{a^*, a^\dagger\}\}}.
\end{align*}
\item If $\zeta(t,x) =-1$, offspring inherit type $\kappa_0$ if $\kappa_0$ is
type $a$, and they inherit type $\kappa_1$ if $\kappa_0$ is type $A$.
Thus
\begin{align*}
v(t,y) & = (1 - \bar{u})v(t_{-},y)  + \bar{u}\left( 
\ind_{\{\kappa_{0} = a^*\}}+\ind_{\{\kappa_0=A, \kappa_{1} = a^*\}}\right);\\
w(t,y) & = (1 - \bar{u})w(t_{-},y)  + \bar{u}
\ind_{\{\kappa_{0} = \kappa_{1} = A\}^c}.
\end{align*} 
\end{enumerate}
\end{enumerate}
\end{enumerate}
\end{definition}

\begin{theorem}
\label{tracer theorem}
Applying our previous scalings, 
let the population evolve under the assumptions of Theorem~\ref{scaling result}.

Assume that the sequence of measures of the initial states of the marked population
converges weakly in $\mathcal{M}_{\lambda}$ to a measure with a density
 $v^\infty(0,x)= \lim_{n \to \infty} \overline{v}_{n}(0,x)$.
Then as $n\rightarrow\infty$,
the corresponding sequence of rescaled measure-valued processes is tight 
in $D([0,\infty),\mathcal{M}_{\lambda}^{2})$,
and any limit point is a weak solution to a system of stochastic
p.d.e.'s which in $d\geq 2$ takes the form
\begin{align*}
\mathrm{d}w^{\infty} \!&=\!  \left(\frac{\bar{u}\Gamma_R}{2}
\Delta w^{\infty} + V_R^2\bar{u}^{2}{\vs}^{2}w^{\infty}(1 - w^{\infty})
(1 - 2w^{\infty})\right)\!\mathrm{d}t  \!
+\!  \sqrt{2}V_R\bar{u}\vs w^{\infty}(1 - w^{\infty}){W}(\mathrm{d}t,\mathrm{d}x),
 \\
dv^{\infty} &= \left( \frac{\bar{u}\Gamma_R}{2}
\Delta v^{\infty} + V_R^2\bar{u}^{2}{\vs}^{2}v^{\infty}(1 - w^{\infty})
(1 - 2w^{\infty})\right)\!\mathrm{d}t \!
+\!\sqrt{2}V_R\bar{u}\vs v^{\infty}(1 - w^{\infty}){W}(\mathrm{d}t,\mathrm{d}x),
\end{align*}
where the noise ${W}$ is as before.

For dimension $d=1$, the limiting process is a weak
solution to the system of stochastic partial differential equations 
\begin{align*}
\mathrm{d}w^{\infty} \!=&  \left(\frac{\bar{u}\Gamma_R}{2}
\Delta w^{\infty} + 4R^2\bar{u}^{2}{\vs}^{2}w^{\infty}(1 - w^{\infty})
(1 - 2w^{\infty})\right)\mathrm{d}t
 +   \sqrt{2}2R\bar{u}w^{\infty}(1 - w^{\infty}){W}(\mathrm{d}t,\mathrm{d}x)\\ 
&+ 2R\bar{u}\sqrt{v^{\infty}(1 - w^{\infty})}{\mathcal{W}^0}(\mathrm{d}t,\mathrm{d}x) 
+ 2R\bar{u}\sqrt{(w-v)^{\infty}(1 - w^{\infty})}{\mathcal{W}^1}(\mathrm{d}t,\mathrm{d}x), \\
\mathrm{d}v^{\infty} \! =& \left(\frac{\bar{u}\Gamma_R}{2}\Delta 
v^{\infty} + 4R^2\bar{u}^{2}{\vs}^{2}v^{\infty}(1 - w^{\infty})
(1 - 2w^{\infty})\right)\mathrm{d}t 
+  \sqrt{2}2R\bar{u}{\vs} v^{\infty}(1 - w^{\infty})
{W}(\mathrm{d}t,\mathrm{d}x)\\
& + 
2R\bar{u}\sqrt{v^{\infty}(1 - w^{\infty})}{\mathcal{W}^0}(\mathrm{d}t,\mathrm{d}x)
+2R\bar{u}\sqrt{v^{\infty}(w - v^{\infty})}{\mathcal{W}^2}(\mathrm{d}t,\mathrm{d}x),
\end{align*}
where, as before, ${W}$ is white in time and coloured in space and 
and ${\mathcal{W}^i}$, $i=0,1,2$, are independent space-time white noises.
\end{theorem}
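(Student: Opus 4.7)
The plan is to follow the strategy used to prove Theorem~\ref{scaling result}, now applied to the coupled pair $(v^n, w^n)$ (and their local averages $(\bar v_n, \bar w_n)$), observing that the $w$-marginal evolves autonomously exactly as in that theorem, so the only genuinely new content is the joint dynamics of the tracer $v$ together with $w$.

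First I would write down the rescaled generator $\mathcal{L}^{(n)}$ of the triple $(v, w, \zeta)$. As in Sections~\ref{nospace} and~\ref{results}, it splits (up to errors vanishing under the scaling $\alpha \in (0, 1/6)$) into a neutral part, a fluctuating-selection part of order $n^\alpha$, and an environmental resampling part of order $n^{2\alpha}$. The key new input is the per-event covariance structure of the two-dimensional jump $(\Delta v, \Delta w)$: at a neutral event the parental type is drawn from $\{a^\ast, a^\dagger, A\}$ with probabilities $(v, w-v, 1-w)$, and direct computation gives mean zero and covariance matrix
\begin{equation*}
u_n^2\begin{pmatrix} v(1-v) & v(1-w) \\ v(1-w) & w(1-w) \end{pmatrix}.
\end{equation*}
At a selective event the infinitesimal selection drift is $-\zeta\vs_n v(1-w)$ for $v$ and $-\zeta\vs_n w(1-w)$ for $w$. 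Applying the Kurtz averaging step (Theorem~\ref{kurtzaveraging}) with perturbed test function $g = F(\langle v, f_1\rangle, \langle w, f_2\rangle) + n^{-\alpha}\mathcal{L}^{fsel}F$ yields, exactly as in Section~\ref{nospace}, the drift $V_R^2\bar u^2\vs^2 v(1-w)(1-2w)$ for $v$ alongside the standard $V_R^2\bar u^2\vs^2 w(1-w)(1-2w)$ for $w$, together with a rank-one coloured-noise contribution driven by the common noise $W$ with coefficients $\sqrt{2}V_R\bar u\vs\,v(1-w)$ and $\sqrt{2}V_R\bar u\vs\,w(1-w)$ in the two components.

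Tightness of the sequence of rescaled measure-valued pairs in $D([0,\infty), \mathcal{M}_\lambda^2)$ follows coordinate-wise from the argument establishing tightness in Theorem~\ref{scaling result}, since $v \in [0,1]$ satisfies the same a priori bounds as $w$. In $d \geq 2$ the neutral covariance vanishes under local averaging, leaving only the coloured-noise driving, and the two-equation system driven by the single noise $W$ emerges directly from the limiting generator. In $d = 1$ the neutral quadratic variation survives as a space-time white noise contribution, and the three independent noises $\mathcal{W}^0, \mathcal{W}^1, \mathcal{W}^2$ appear as the canonical orthogonal decomposition of the covariance matrix above:
\begin{equation*}
\begin{pmatrix} v(1-v) & v(1-w) \\ v(1-w) & w(1-w) \end{pmatrix}
=
\begin{pmatrix} v(1-w) & v(1-w) \\ v(1-w) & v(1-w) \end{pmatrix}
+
\begin{pmatrix} v(w-v) & 0 \\ 0 & (w-v)(1-w) \end{pmatrix},
\end{equation*}
where the first (rank-one) matrix is the outer product encoding the common noise $\mathcal{W}^0$ with coefficient $\sqrt{v(1-w)}$ in both components, and the diagonal remainder (non-negative since $v \leq w$) is carried by $\mathcal{W}^2$ for $v$ and $\mathcal{W}^1$ for $w$ with the coefficients $\sqrt{v(w-v)}$ and $\sqrt{(w-v)(1-w)}$ stated in the theorem.

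The main obstacle is, exactly as in Theorem~\ref{scaling result}, controlling the error terms produced by the Kurtz averaging: the iterated operators $\mathcal{L}^{neu}\mathcal{L}^{fsel}$ and $\mathcal{L}^{fsel}\mathcal{L}^{fsel}$ must be shown to remain bounded under the local spatial averaging, uniformly in $n$, with the regularity assumption~\eqref{regularity of g} playing the same role for the $v$-equation as for the $w$-equation. To identify any limit point as a weak solution of the stated SPDE system I would exploit the skew-product structure: the marginal $w^\infty$ is unique in $d \geq 2$ by Theorem~\ref{scaling result}, and conditional on $w^\infty$ the equation for $v^\infty$ is linear in $v$ with Lipschitz $w^\infty$-dependent coefficients, so the Markov mapping argument of Appendix~\ref{mgpspde} transfers essentially verbatim and gives uniqueness of the joint solution. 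In $d = 1$, consistent with the single-type setting, uniqueness remains out of reach except in the special case where the coloured noise $W$ degenerates to space-time white noise.
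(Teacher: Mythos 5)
Your proposal follows essentially the same strategy as the paper, which itself only sketches this proof (stating that it is ``an even longer version of the proof of Theorem~\ref{scaling result}'' following exactly the same two-step strategy, and then indicating via the Kurtz separation-of-timescales trick applied to $\mathcal{L}^{sel}(\mathcal{L}^{sel}f)$ why the drift and common-noise terms take the stated form). Your per-event covariance computation and the orthogonal decomposition of the neutral covariance matrix into the rank-one $\mathcal{W}^0$ piece plus the diagonal $\mathcal{W}^1,\mathcal{W}^2$ remainder correctly fills in the detail the paper omits; note only that the theorem does not actually assert uniqueness of the joint limit, so your final skew-product uniqueness remarks, while plausible, are not required.
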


The proof of this result is an even longer version of the proof of
Theorem~\ref{scaling result},
but it follows exactly the same strategy. First we show that the limit
points are solutions to appropriate martingale problems, then we show that
any solution to the martingale problem provides a weak solution to the
system of stochastic p.d.e.'s. 
Rather than providing details of the proof, we indicate 
why this result is to be expected. We once again use the trick
of \cite{kurtz:1973}. In the case of one dimension and 
genic selection,  \cite{durrett/fan:2016}
obtain a pair of stochastic p.d.e.'s of the form
\begin{eqnarray*}
\mathrm{d}w &=&\left(\alpha\Delta w+\vs w(1-w)\right)\mathrm{d}t
+\sqrt{v(1-w)}W^0(dt,dx)
+\sqrt{(w-v)(1-w)}W^{1}(\mathrm{d}t,\mathrm{d}x)\\
\mathrm{d}v &=&\left(\alpha\Delta v +\vs v(1-w)\right)\mathrm{d}t+\sqrt{v(1-w)}W^0(\mathrm{d}t,\mathrm{d}x)
+\sqrt{v(w-v)}W^{2}(dt,dx),
\end{eqnarray*}
where $W^0$, $W^1$ and $W^2$ are independent space-time white noises.
Writing the corresponding generator acting on test functions $f(w,l)$
as ${\cal L}^{neu}+ {\cal L}^{sel}$ as in Section~\ref{nospace},
to identify the generator in the limit of (appropriately
scaled) rapidly fluctuating selection we must evaluate 
${\cal L}^{sel}({\cal L}^{sel} f(\cdot, \cdot))(w,v)$ which, up to constants, is
\begin{multline*}
\left(w(1-w)\frac{\partial}{\partial w}+ 
v(1-w)\frac{\partial}{\partial v}\right)
\left\{w(1-w)\frac{\partial f}{\partial w}+ v(1-w)\frac{\partial f}{\partial v}
\right\} 
\\ =
w(1-w)(1-2w)\frac{\partial f}{\partial w}
+
v(1-w)(1-2w)\frac{\partial f}{\partial v}
\\+
w^2(1-w)^2\frac{\partial^2 f}{\partial w^2}
+2 w v (1-w)^2\frac{\partial^2 f}{\partial v\partial w}
+v^2(1-w)^2\frac{\partial^2 f}{\partial v^2}.
\end{multline*}
From this we see that the stochastic p.d.e's in Theorem~\ref{tracer theorem}
are of precisely the form that we should expect.

\section{Numerical results}
\label{numerics}

In order to gain a little more intuition about the effects of 
fluctuating selection on allele frequencies, in this section we 
present the results of a simple numerical experiment. It is certainly
not an exhaustive study, but it points to some of the challenges that face
us in distinguishing causes of patterns of allele frequencies. Our simulations are not of the spatial Lambda-Fleming-Viot models, but 
of natural extensions of the classical Moran model to incorporate spatial structure and fluctuating selection. 
After suitable scaling, we expect allele frequencies under these models to
converge to the same limiting stochastic p.d.e.~as our
scaled SLFVFS.

\begin{definition}[Spatial Moran model with fluctuating selection]
\label{SMMFS}
The population, which consists of two genetic types, $\{a,A\}$,
lives at the vertices of a 
discrete lattice $\mathbb L$. There are $N_d$ individuals at each vertex
(or {\em deme}). The state of the environment at time $t$ in deme $x$
is denoted by $\zeta(t,x)$. 

The dynamics of the process are described as follows:
\begin{enumerate}
\item{\bf Reproduction events}
{\begin{enumerate}
\item{\bf Neutral events:} For each deme, independently, at rate
$\binom{N_d}{2}$ a pair of individuals is chosen (uniformly at random), one of
the pair (picked at random) dies and the other splits in two;
\item{\bf Selective events:} For each deme $x$, independently, at rate
$\vs N_d$ a pair of individuals is chosen (uniformly at random), one
individual splits in two and the other one dies,  
if $\zeta(t,x)=-1$ and at least one of the pair is type $a$, then it is 
a type $a$ individual that is chosen to split, whereas if 
$\zeta(t,x)=1$ and at least one of the pair is type $A$, then a type $A$
individual is chosen to split.
\end{enumerate}}
\item{\bf Migration events:} For each pair of demes $x_1$, $x_2$, we associate
a nonnegative parameter $m_{x_1,x_2}$. Independently for each pair, 
at rate $m_{x_1,x_2}N_d$, an individual
is chosen uniformly at random from each of the demes $x_1$, $x_2$ and they
exchange places.
\item{\bf Environmental events:} At the times of a Poisson process 
of rate $\alpha$, which is 
independent of those driving reproduction and migration, the environment
is resampled. The value of the environment variable at each deme is 
uniformly distributed on $\{-1,+1\}$ and $\IE[\zeta(t,x)\zeta(t,y)]=g(x,y)$ for 
a correlation function $g$.
\end{enumerate}     
\end{definition}


In the experiments that follow, 
we take the lattice $\mathbb L$ to be a circle of $100$ demes with nearest
neighbour migration at rate $1$. We set $N_d=400$, $\vs =0.1$ and 
$\alpha=10$. 
The environment variables in demes $0-50$ all take the same value, 
as do those in demes $51-100$.  
We consider four different scenarios: 
\begin{enumerate}
\item Demes $0-50$ and $51-100$ are perfectly anticorrelated; 
the environment fluctuates in time and the direction of selection is 
always different in demes  $0-50$ and $51-100$;
\item Demes $0-100$ are perfectly correlated; 
the direction of selection fluctuates in time but it is the same in every deme;
\item Constant selection (the environment is fixed), with the 
direction of selection in demes  $0-50$ the opposite of that in demes  $51-100$;
\item The neutral case.
\end{enumerate}

To ensure comparability of experiments, the same events are used for all
the scenarios, with the only difference lying in the value of the 
environment variable. Thus, in the neutral case, either individual
is equally likely to be the parent in `selective events', irrespective of 
type. 
A more precise description of the code used for simulations can be found in 
Appendix~\ref{code}.


As a first comparison, Figure~\ref{global proportions} shows the proportion
of type $a$ individuals across the whole population. This is just a 
single realisation of the experiment. There is certainly
no dramatic divergence from neutrality. However, as we illustrate 
below, this can mask some more interesting effects at the local level.
\begin{figure}[h]
\includegraphics[width=4in]{./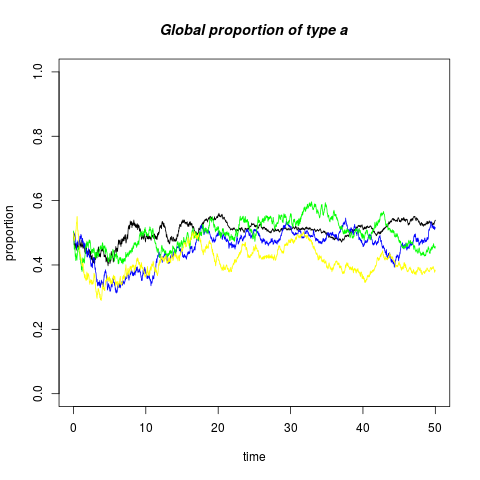}
\caption{Global proportions of type $a$ individuals; one realisation, four
different scenarios: blue anticorrelated environments, green correlated 
environments, black constant selection, and yellow neutral. See the main 
text for a full explanation.} 
\label{global proportions}
\end{figure}

In Figure~\ref{proportions in demes}, we have used a greyscale to record the
proportion of type $a$ individuals in each deme - the darker the colour,
the greater the proportion of type $a$. In the top left, selection is fixed, 
and we clearly see the effect of type $a$ being favoured in demes $50-100$.
In the next two frames (top right and bottom left), the environment 
fluctuates, but whereas on the top left demes $0-50$ always 
favour the opposite type to demes $51-100$, on the bottom left all demes 
always favour the same type. The neutral model is the bottom right.
When we repeat over many realisations, we see a
greater concentration of types than for the neutral model, but
it is certainly not easy to distinguish between the two frames.

\begin{figure}
    \centering
    \begin{subfigure}[b]{0.48\textwidth}
        \includegraphics[width=\textwidth]{./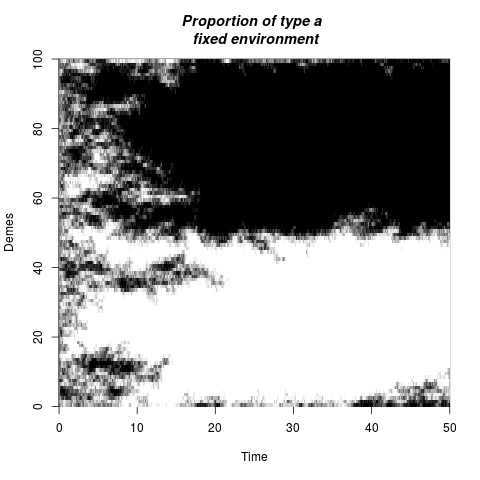} 
    \end{subfigure}
    ~ 
    \begin{subfigure}[b]{0.48\textwidth}
        \includegraphics[width=\textwidth]{./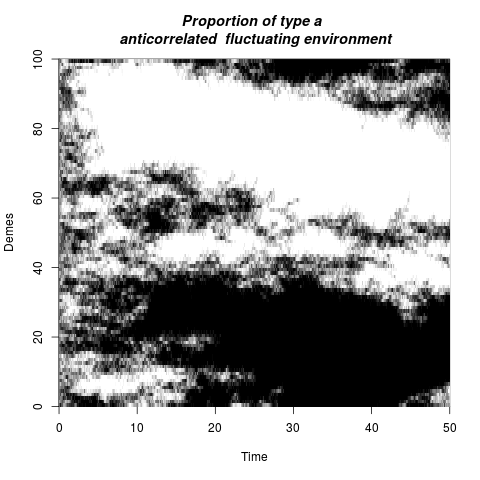}
    \end{subfigure}
    ~ 
    
        \centering
    \begin{subfigure}[b]{0.48\textwidth}
        \includegraphics[width=\textwidth]{./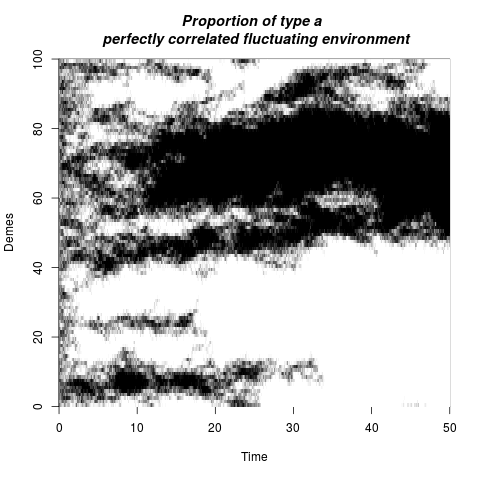} 
    \end{subfigure}
    ~ 
    \begin{subfigure}[b]{0.48\textwidth}
        \includegraphics[width=\textwidth]{./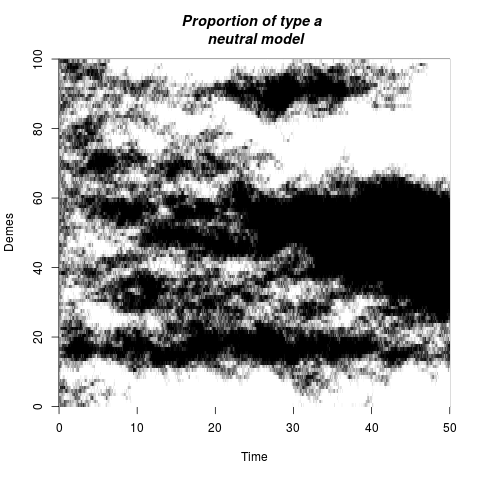}
    \end{subfigure}
    \caption{Map of the proportion of type $a$ individuals in each deme;
the darker the colour the higher the proportion of type $a$. Top left, constant
selection; top right, fluctuating selection with demes $0-50$ and $51-100$
perfectly anticorrelated; bottom left, fluctuating selection with all demes 
perfectly correlated; bottom right, the neutral case.}
\label{proportions in demes}
\end{figure}

Just following 
the overall proportion of types
in a deme is throwing away a lot of information, which
may be available in genetic data, about the distribution of families. 
To explore this we used `tracers', further explored in the context of 
Spatial Lambda-Fleming-Viot process in  Section~\ref{tracers}. In 
Figure~\ref{tracers fixed envt}
we mark individuals descended from the population in particular demes at
time zero. The darker the colour, the higher the proportion of marked
individuals. In a constant environment, left panel, the surviving family is well
adapted to the environment in demes $50-100$, but has difficulty invading 
demes $1-50$, where it is not favoured. It also struggles to expand beyond 
deme $80$. This turns out to be because of competition with the equally 
well adapted family that is descended from individuals that were in deme $84$
at time zero (right hand panel). 

\begin{figure}
    \centering
    \begin{subfigure}[b]{0.48\textwidth}
        \includegraphics[width=\textwidth]{./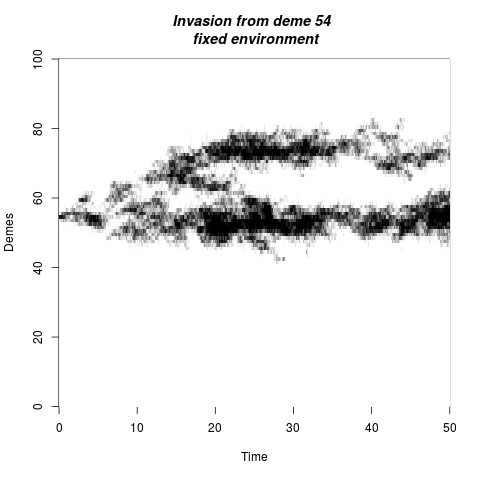} 
    \end{subfigure}
    ~ 
    \begin{subfigure}[b]{0.48\textwidth}
        \includegraphics[width=\textwidth]{./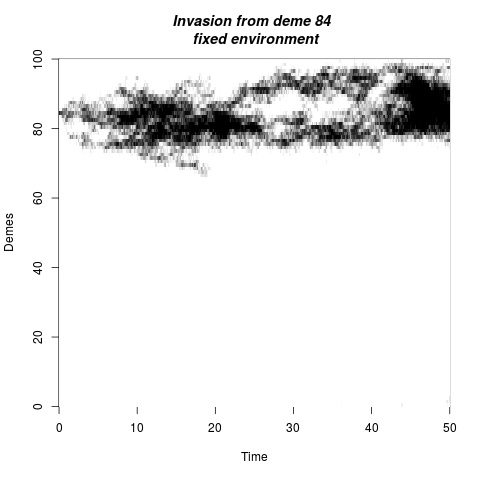}
    \end{subfigure}
    ~ 
    \caption{Tracer map of individuals descended from ancestors located in 
particular demes at
time zero; the darker the colour, the higher the proportion of such 
individuals. Fixed environment.}
\label{tracers fixed envt}
\end{figure}

In left panel of Figure~\ref{tracers: fluctuating envt}, 
we see a successful family in
a fluctuating environment (with demes $0-50$ and $51-100$ perfectly
anticorrelated). The family is on the brink of extinction several times and
is rescued by a change in the environment. In the right panel, we 
see a family that begins life right on the boundary between the two regions.
The environments are perfectly anticorrelated and the family is able to
survive and spread much more readily.

\begin{figure}
    \centering
    \begin{subfigure}[b]{0.48\textwidth}
        \includegraphics[width=\textwidth]{./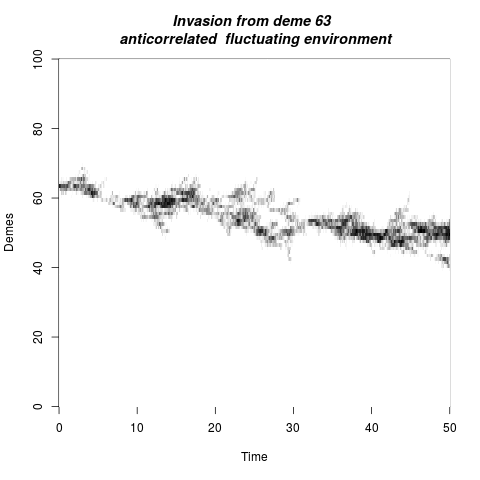} 
    \end{subfigure}
    ~ 
    \begin{subfigure}[b]{0.48\textwidth}
        \includegraphics[width=\textwidth]{./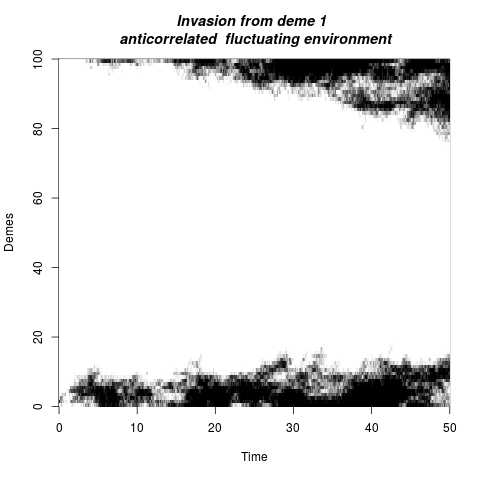}
    \end{subfigure}
    ~ 
    \caption{Tracer map of individuals descended from ancestors located in 
particular demes at
time zero; the darker the colour, the higher the proportion of such 
individuals. The environment is fluctuating, with its value in 
demes $0-50$ perfectly anticorrelated with that in demes $51-100$.}
\label{tracers: fluctuating envt}
\end{figure}
The right hand panel of Figure~\ref{tracers: fluctuating envt} 
can be contrasted with
the left hand panel of Figure~\ref{tracers: fluctuating correlated envt}. The 
descendants of ancestors in deme $1$ find it harder to spread in a perfectly
correlated environment than in the perfectly anticorrelated environment of 
the previous figure. The trace of descendants of ancestors in deme $78$ in
the right hand panel of Figure~\ref{tracers: fluctuating correlated envt} 
shows the
`thinning' of the family resulting from the periods of time when it is not
favoured. 
\begin{figure}
    \centering
    \begin{subfigure}[b]{0.48\textwidth}
        \includegraphics[width=\textwidth]{./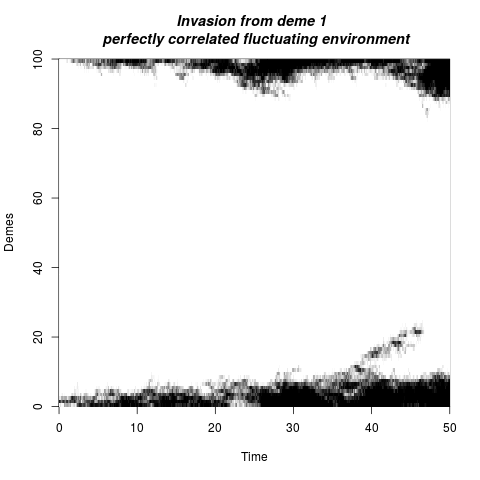} 
    \end{subfigure}
    ~ 
    \begin{subfigure}[b]{0.48\textwidth}
        \includegraphics[width=\textwidth]{./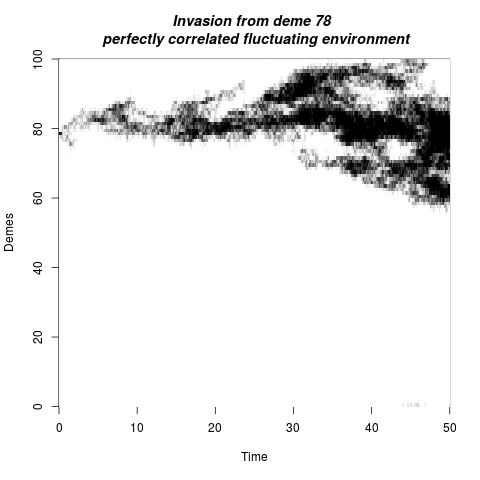}
    \end{subfigure}
    ~ 
    \caption{Tracer map of individuals descended from ancestors located in 
particular demes at
time zero; the darker the colour, the higher the proportion of such 
individuals. The environment is fluctuating, with its value in 
demes $0-50$ perfectly correlated with that in demes $51-100$.}
\label{tracers: fluctuating correlated envt}
\end{figure}

Finally, Figure~\ref{tracers: neutral}, shows the trace of descendants of 
ancestors
in demes $12$ and $16$ for the neutral model. There appears to be a barrier
between the two families, which could easily be mistaken for a change in 
the environment somewhere between demes $12$ and $16$, on the lower side
of which the family descended from deme $12$ is better adapted and on the
upper side of which the family from deme $16$ is better adapted. In fact this
is due to competition between two equally fit families.  

\begin{figure}
    \centering
    \begin{subfigure}[b]{0.48\textwidth}
        \includegraphics[width=\textwidth]{./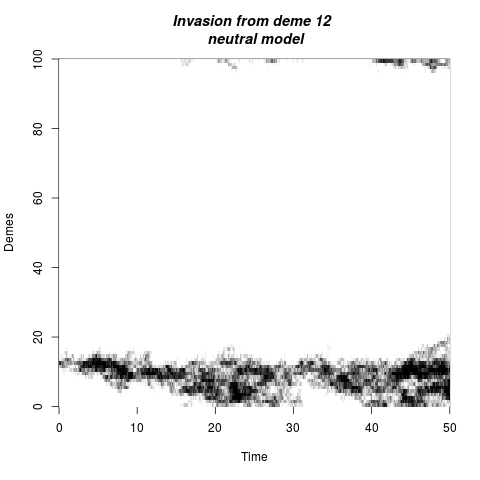} 
    \end{subfigure}
    ~ 
    \begin{subfigure}[b]{0.48\textwidth}
        \includegraphics[width=\textwidth]{./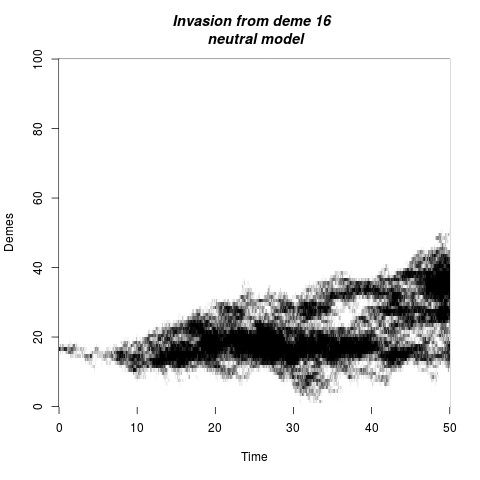}
    \end{subfigure}
    ~ 
    \caption{Tracer map of individuals descended from ancestors located in 
particular demes at
time zero; the darker the colour, the higher the proportion of such 
individuals. Neutral model.}
\label{tracers: neutral}
\end{figure}

\section{Proof of Theorem~\ref{scaling result}}\label{scaling proof}

The proof of Theorem~\ref{scaling result} will rest on Theorem~2.1 of
 \cite{kurtz:1992} (or rather his Example~2.2). 
For a metric space $E$, let 
$l_m(E)$
be the space of measures on $[0,\infty) \times E$ such that 
$\mu \in l_m(E)$ if $\mu([0,t) \times E) = t$.
\begin{theorem}[ \cite{kurtz:1992}, Theorem 2.1]
\label{kurtzaveraging}
Let $E_1$, $E_2$ be complete separable metric spaces, 
and set $E=E_1\times E_2$. For each $n$, let $\{(X_n,Y_n)\}$ be 
a stochastic process with sample paths in $D_E([0,\infty))$ adapted to 
a filtration $\{{\cal F}_t^n\}$. Assume that 
$\{X_n\}$ satisfies the compact containment condition, that is, for 
each $\epsilon>0$ and $T>0$, there exists a compact $K\subset E_{1}$ such
that
\begin{equation}
\label{tightness}
\inf_n\IP[X_n(t)\in K, t\leq T]\geq 1-\epsilon,
\end{equation}
and assume that $\{Y_n(t) : t\geq 0, n=1,2,\ldots\}$ is 
relatively compact (as a collection of $E_2$-valued random 
variables). Suppose that there is an operator 
$A:{\cal D}(A)\subset \overline{C}(E_1)\rightarrow C(E_1\times E_2)$ such
that for $f\in {\cal D}(A)$ there is a process $\epsilon_n^f$ for 
which 
\begin{equation}
\label{kurtz rearrangement}
f(X_n(t))-\int_0^t Af(X_n(s),Y_n(s))\mathrm{d}s +\epsilon_n^f(t)
\end{equation}
is an $\{{\cal F}_t^n\}$-martingale. Let ${\cal D}(A)$ be dense
in $\overline{C}(E_1)$ in the topology of uniform 
convergence on compact sets. Suppose that for each $f\in{\cal D}(A)$ and
each $T>0$, there exists $p>1$ such that
\begin{equation}
\label{kurtzbound}
\sup_n\IE\left[\int_0^T|Af(X_n(t),Y_n(t)|^{p}\mathrm{d}t\right]<\infty
\end{equation}
and 
\begin{equation}
\label{kurtzboundepsilon}
\lim_{n\rightarrow\infty}\IE\left[\sup_{t\leq T}|\epsilon_n^f(t)|\right]=0.
\end{equation}
Let $\Gamma_n$ be the $l_m(E_2)$-valued random variable given by
\begin{equation}
\nonumber
\Gamma_n\left([0,t]\times B\right)=\int_0^t\ind_B(Y_n(s))\mathrm{d}s.
\end{equation}
Then $\{(X_n,\Gamma_n)\}$ is relatively compact in 
$D_{E_1}[0,\infty)\times l_m(E_2)$, and for any limit point
$(X,\Gamma)$ there exists a filtration $\{{\cal G}_t\}$ such that
\begin{equation}
f(X(t))-\int_0^t\int_{E_2}Af(X(s),y)\Gamma(\mathrm{d}s\times \mathrm{d}y)
\end{equation}
is a $\{{\cal G}_t\}$-martingale for each $f\in {\cal D}(A)$.
\end{theorem}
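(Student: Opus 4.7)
The plan is to follow the same generator-expansion and separation-of-timescales strategy as in the non-spatial case of Section~\ref{nospace}, but now in the measure-valued setting, and conclude via the averaging Theorem~\ref{kurtzaveraging}. First I would compute the generator $\mathcal{L}^n$ of the joint Markov process $(M^n_t,\zeta_n(t,\cdot))$ acting on test functions of the form $F(\langle w,f\rangle)$ with $F\in C_c^\infty(\mathbb{R})$ and $f\in C_c^2(\mathbb{R}^d)$, by integrating against the intensity of $\Pi^{neu}$, $\Pi^{fsel}$, and $\Pi^{env}$ and using that each event replaces $w$ by $(1-u_n)w+u_n\mathbf{1}_{\kappa=a}$ inside $B_n(x)$. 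Taylor-expanding $F$ to second order and $f$ near $x$ to second order (here $\Gamma_R$ will emerge from $\int_{B(0,R)}|y|^2\,dy$-type integrals, exactly as in \cite{etheridge/veber/yu:2014}), I would obtain a decomposition
\begin{equation*}
\mathcal{L}^n = \mathcal{L}^{neu} + n^{\alpha}\mathcal{L}^{fsel} + n^{2\alpha}\mathcal{L}^{env} + \mathcal{O}(n^{2\alpha-2/3}),
\end{equation*}
in which $\mathcal{L}^{neu}$ already contains both the averaged Laplacian term (of first order in $F'$) and, in $d=1$ only, the Wright-Fisher quadratic-variation term (of second order in $F''$); the error controls use $\alpha<1/6$ to kill $n\vs_n^2u_n$.

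Next I would apply Kurtz's perturbation trick by working with $g = F(\langle w,f\rangle) + n^{-\alpha}\phi$, where $\phi(w,\zeta)$ is chosen so that $\mathcal{L}^{env}\phi = -\mathcal{L}^{fsel}F(\langle w,f\rangle)$; since $\mathcal{L}^{env}\phi=\mathbb{E}_\pi[\phi]-\phi$ and $\mathcal{L}^{fsel}F$ is an odd function of $\zeta$, we may take $\phi = \mathcal{L}^{fsel}F$. This cancels the diverging $n^\alpha\mathcal{L}^{fsel}F$ contribution, and leaves $\mathcal{L}^{fsel}\mathcal{L}^{fsel}F$ as the surviving finite-order term, together with errors of the form $n^{-\alpha}\mathcal{L}^{neu}\phi$ that vanish in the limit. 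Expanding $\mathcal{L}^{fsel}\mathcal{L}^{fsel}F(\langle w,f\rangle)$, the chain rule produces exactly (i) the cubic deterministic drift $V_R^2\bar{u}^2\vs^2 w(1-w)(1-2w)$ (via $F'$), and (ii) a spatial double integral against $\zeta(t,x)\zeta(t,y)$ times the factors $w(1-w)(y,x)$ (via $F''$). Averaging the latter under the environmental stationary distribution replaces $\zeta(t,x)\zeta(t,y)$ by $g(n^{1/3}x,n^{1/3}y)$ under the spatial rescaling; the hypothesis~(\ref{regularity of g}) provides the Lipschitz control needed to compare $g$ evaluated at points within the same event-ball $B_n(x)$ of radius $n^{-1/3}R$ and to pass rigorously to the limiting kernel $g(x,y)$.

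Tightness of $\{\overline{M}^n\}$ in $D([0,\infty),\mathcal{M}_\lambda)$ is immediate from the vague compactness of $\mathcal{M}_\lambda$, so the compact containment condition~(\ref{tightness}) holds; relative compactness of the empirical measure associated to $\zeta_n$ follows from the fact that $\zeta_n$ takes values in a bounded set. The $L^p$ bound~(\ref{kurtzbound}) reduces to uniform bounds on the generator applied to the perturbed test function, which are routine since $w$ and $\zeta$ are bounded and $f$ has compact support; and~(\ref{kurtzboundepsilon}) follows from $\|\phi\|_\infty = O(n^{-\alpha})$ by a martingale-maximal-inequality argument. Theorem~\ref{kurtzaveraging} then identifies any limit point with a solution of the martingale problem~(\ref{Martingale Problem for original model 1d}) or~(\ref{Martingale Problem for original model nd}). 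Finally, uniqueness in $d\geq 2$ is obtained by invoking the Markov Mapping Theorem (as in the companion result in Appendix~\ref{mgpspde}) to identify martingale-problem solutions with weak solutions of~(\ref{Limiting equation nd}), and then applying the pathwise uniqueness result of \cite{rippl/sturm:2013}.

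The main obstacle I expect is the identification step for the $F''$ term: keeping careful track of the spatial scaling so that the two-point average of the rescaled environment $\zeta_n$ converges, not in a distributional sense, but strongly enough to push $g(n^{1/3}x,n^{1/3}y)$-type kernels through the nonlinear $w(1-w)$ factors inside an integral against $f(x)f(y)$, uniformly over events of vanishing spatial scale $n^{-1/3}$. This is where~(\ref{regularity of g}) is essential; away from this step, the argument is essentially a book-keeping exercise combining the non-spatial computation of Section~\ref{nospace} with the spatial-averaging machinery of \cite{etheridge/veber/yu:2014}.
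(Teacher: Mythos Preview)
Your proposal is not a proof of the stated theorem at all. The statement given is Theorem~\ref{kurtzaveraging}, which is Kurtz's abstract averaging theorem (Theorem~2.1 of \cite{kurtz:1992}); the paper does not prove this result but simply quotes it as a tool. What you have written is instead a sketch of the proof of Theorem~\ref{scaling result}, the paper's main scaling limit for the SLFVFS, in which Theorem~\ref{kurtzaveraging} is \emph{applied} as a black box. In other words, you have confused the tool with the target: nothing in your proposal establishes relative compactness of $\{(X_n,\Gamma_n)\}$ in $D_{E_1}[0,\infty)\times l_m(E_2)$ in the abstract setting, nor that limit points solve the averaged martingale problem; these are the actual assertions of the statement you were asked to prove.

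If the intended task was to prove Theorem~\ref{scaling result}, then your outline is broadly aligned with the paper's strategy in Section~\ref{scaling proof}: the same generator decomposition~\eqref{final gneator}, the same perturbed test function~\eqref{test function for Kurtz trick}, the same identification of $\mathcal{L}^{fsel}\mathcal{L}^{fsel}\psi_{F,f}$ as the source of the cubic drift and coloured-noise quadratic variation, and the same route to uniqueness via Appendix~\ref{mgpspde} and \cite{rippl/sturm:2013}. One point that is underdeveloped in your sketch and handled carefully in the paper is the control of $\epsilon_n^f$: it is not just $\|\phi\|_\infty=O(n^{-\alpha})$; the dominant contributions to $\epsilon_n^f$ (see~\eqref{epsilonnf}) are the integrals involving $\zeta(s,y)^2-\zeta(s,x)\zeta(s,y)$ and $\overline{w}_n(s,x)-\overline{w}_n(s,y)$, and bounding $\mathbb{E}[\sup_{t\le T}|\epsilon_n^f(t)|]$ requires both the Lipschitz assumption~\eqref{regularity of g} and the continuity estimate~\eqref{ctty eq} on $\overline{w}_n$, not merely a maximal inequality. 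You also omit Proposition~\ref{continuity proposition}, which the paper needs to pass nonlinear functionals like $\langle \overline{w}_n^2,f\rangle$ and $\langle \overline{w}_n^3,f\rangle$ through the measure-valued limit.
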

As a particular case,  \cite{kurtz:1992} provides the following example.
\begin{example}[ \cite{kurtz:1992}, Example 2.2]
\label{kurtzexample}
Suppose $Y$ is stationary and ergodic, and $Y_n$ in 
Theorem~\ref{kurtzaveraging}
is given by $Y_n(t)\equiv Y(nt)$. Then $\Gamma=m\times \pi$ where 
$m$ denotes Lebesgue measure and $\pi$ is the marginal distribution 
for $Y$. Consequently, under the assumptions of 
Theorem~\ref{kurtzaveraging},
$X$ is a solution of the martingale problem for $C$ given by
$$Cf(x)=\int Af(x,y)\pi(\mathrm{d}y),\qquad f\in {\cal D}(A).$$
\end{example}

Let us briefly discuss how our setup fits into 
that of Theorem~\ref{kurtzaveraging}. 
The sequence of measure-valued evolutions 
$\big(\overline{M}^{n}_{t} \big)_{t \geq 0}$ corresponds to the 
process $X_{n}$, while the sequence of environments corresponds to the 
process $Y_{n}$.
Notice that $Y_{n}$  defined that way satisfies the assumptions of 
Example~\ref{kurtzexample}, and therefore those of Theorem~\ref{kurtzaveraging}.
The domain of the generator $A$ is given by the closure of
\begin{align}
\label{domain of generator}
\mathcal{D}(A)  = \bigg\lbrace  
F(\langle \cdot, w \rangle) : f \in C_{c}^{\infty}(\mathbb{R}^{d}), 
F \in C^{\infty}(\mathbb{R})
\bigg\rbrace.
\end{align}
Since the state space of  $\big(\overline{M}^{n}_{t} \big)_{t \geq 0}$ is compact, the compact containment condition \eqref{tightness} is satisfied.

The key step in applying Theorem~\ref{kurtzaveraging} is the identification
of the generator $A$, which leads to the 
decomposition~\eqref{kurtz rearrangement}.
This consists of two steps. First we approximate the part of the 
SLFVFS generator corresponding to neutral events, and second we deal 
with the terms corresponding to fluctuating selection. 
These two steps are accomplished in 
Subsection~\ref{Subsection Identifyiing the limit}.

Our approach mirrors the strategy of Section~\ref{nospace}.
Having transformed the generator of the SLFVFS into a form which 
allows us to consider the process of averaged densities 
$\overline{w}^{n}_{t}$, we use the `separation of timescales' trick 
of  \cite{kurtz:1973}, to (up to a small error)
split the generator into parts corresponding to neutral and selective 
events in a convenient way; see equation~\eqref{Limit formal evaluation step 1}.
We then consider the two terms in this decomposition.
The analysis of the neutral part of the generator 
will closely follow 
\cite{etheridge/veber/yu:2014}; the novel step is the second, 
which deals with the
terms corresponding to fluctuating selection.
Our analysis will not only identify the correct form of $A$ and 
$\epsilon_n^{f}$ (see \eqref{form_of_A} and \eqref{epsilonnf} respectively), 
but also provides a uniform bound on $A$, which shows 
that~\eqref{kurtzbound} is satisfied.

Having identified $A$, and $\epsilon_n^{f}$, we show 
that~\eqref{kurtzboundepsilon} holds.
This bound relies on the Lipschitz bound~\eqref{regularity of g} on 
the correlation kernel $g(x,y)$.
Subsection~\ref{Subsection Identifyiing the limit} concludes with a 
discussion of the continuity estimates on the averaged densities of the
measure-valued evolutions that are required to deduce our result. 
These estimates are stated as 
Equation~\eqref{ctty eq}
and Proposition~\ref{continuity proposition} (and
proved in Appendix~\ref{continuity estimates}).
This leads to a complete characterisation of limits
of the sequence of averaged processes $\overline{w}_n$ as solutions
to a martingale problem.

%
%
%


In Subsection~\ref{Subsection Uniqueness of solutions for limiting equation} 
we discuss the 
uniqueness of solution to the limiting martingale problem, which we have only managed to prove 
in dimensions $d\geq 2$.
This requires a separate argument
based on the fact that, under certain regularity conditions 
(which are satisfied as a result of 
Proposition~\ref{continuity proposition}), 
any solution to the martingale 
problem is a weak solution to a corresponding stochastic p.d.e.
The weak uniqueness of solutions to the stochastic 
p.d.e.~is a consequence of 
a combination of a pathwise uniqueness result of 
 \cite{rippl/sturm:2013} and 
a Yamada-Watanabe Theorem.

\subsection{Identifying the limit}
\label{Subsection Identifyiing the limit}
In what follows $C$ is a constant that depends on certain fixed quantities 
such as the functions $f$ and $F$ that appear in our test functions,
c.f.~\eqref{domain of generator}, and the quantities
$\vs$ and $\bar{u}$ appearing in our scaled selection and impact parameters
in the statement of Theorem~\ref{scaling result}.
The quantity $C$ may
vary from line to line; its
exact value is unimportant.

Throughout this section we shall be manipulating expressions pertaining 
to a local average of a version of the density of the SLFVFS. These
manipulations are insensitive to changing that density on a set of
Lebesgue measure zero and so should really be thought of as 
results for the measure-valued evolution itself.

Our result concerns the locally averaged process $\overline{w}_n$. However, 
before passage to the limit, this process is not Markov (in contrast
to $w$ itself). In order to overcome this, we follow
\cite{etheridge/veber/yu:2014} and define 
 \begin{align*}
 \phi_{f}(x) =  \vint_{B_{n}(x)}f(y) \mathrm{d}y.
 \end{align*}
To alleviate notation, we have suppressed the dependence on $n$ in 
this quantity. 
Notice that
\begin{align*}
\langle w_{n},\phi_{f}\rangle = \int_{\mathbb{R}^{d}}
\int_{\mathbb{R}^{d}}w_{n}(y) \frac{n^{\frac d3}}{V_{R}} f(z) \ind_{\{|z-y| \leq R_{n}\}}(z)\mathrm{d}y \mathrm{d}z = \langle \overline{w}_{n},f\rangle.
\end{align*}
Using Taylor's Theorem, we see that for $f\in C^2_c$, 
\begin{align}\label{taylor of phi}
\phi_{f}(x) &= f(x) + \frac{n^{\frac d3}}{V_{R}}  
\int_{B_{n}(x)}(y-x)\cdot \nabla f(x) \mathrm{d}y + 
\mathcal{O}\left( \frac{n^{\frac d3}}{V_{R}} 
\int_{B_{n}(x)\cap S_{f}}|x-y|^{2} \mathrm{d}x\right) \nonumber\\
& = f(x) + \mathcal{O}\left(n^{-\frac23} \right)\ind_{B_{n}(x)\cap S_{f}\neq
\emptyset},
\end{align}
where $S_{f}$ denotes the support of $f$. Here, and throughout, we have used 
$|\cdot|$ to denote the Euclidean norm.
This yields the useful approximation
\begin{align} \label{<w,phif> = <barw,f>}
\langle \overline{w}_{n},f \rangle = \langle w_{n},\phi_{f} \rangle = \langle w_{n},f \rangle + \mathcal{O}(n^{-\frac{2}{3}}).
\end{align}

It will be convenient to have some notation.  
\begin{notn}\label{Remark change def}
Suppose that immediately before a reproduction 
event the unscaled process takes the value $w$,
then immediately after the event, its state will be 
$\theta^+_{x,R,\bar{u}}(w)$, if
the parent of the event was of type $a$, and 
$\theta^{-}_{x,R,\bar{u}}(w)$ if the parent
was of type $A$, where
\begin{align}\label{change def}
\theta^{+}_{x,R,\bar{u}}(w) &= \ind_{\mathbb{R}^{d} \setminus B(x,R)}w + 
\ind_{B(x,R)}\left((1 - \bar{u})w + \bar{u}\right), \nonumber\\ 
\theta^{-}_{x,R,\bar{u}}(w) &= \ind_{\mathbb{R}^{d} \setminus B(x,R)}w + 
\ind_{B(x,R)}(1 - \bar{u})w.
\end{align}
\end{notn}
Using this notation, 
\begin{eqnarray}
\big\langle \theta^{+}_{x,R_{n},u_{n}}(w), \phi_f \big\rangle 
- \langle w,\phi_f \rangle 
= u_{n}\left\langle \ind_{B_{n}(x)}(1 - w),\phi_f\right\rangle,\label{theta+ w}
\\
\label{theta- w}
\big\langle \theta^{-}_{x,R_{n},u_{n}}(w), \phi_f \big\rangle - 
\left\langle w,\phi_f \right\rangle = -u_{n}\left\langle \ind_{B_{n}(x)}w,\phi_f\right\rangle.
\end{eqnarray}

The first step will be to evaluate the generator on test functions of the
form 
\begin{equation}
\label{test functions}
\psi_{F,f}(w_n)=F\left(\langle \overline{w}_n,f\rangle\right)
=F\left(\langle w_n,\phi_f\rangle\right),
\end{equation}
where $F\in C^\infty(\IR)$, $f\in C^\infty_c(\IR^d)$. Notice that these are
independent of the environment $\zeta$.
Recall that the analogous functions
were our starting point in Section~\ref{nospace}.
Since the support $S_f$ of $f$ is compact, it is only overlapped by
events at a finite rate and so, writing ${\cal L}\psi_{F,f}$,
which will be a function of the state $\zeta(\cdot)$ of the environment, 
for the
generator of the unscaled process
acting on test functions of this form, we have
\begin{eqnarray*}
\mathcal{L}\psi_{F,f}(w,\zeta)&=&  
(1-{\vs})\int_{\mathbb{R}^{d}}\vint_{B(x,R)}w(y)
\left[ F(\langle \theta^{+}_{x,R,\bar{u}}(w),\phi_f \rangle) 
- F(\langle w,\phi_f \rangle) \right]\mathrm{d}y\mathrm{d}x\\ 
&&+(1-\vs)\int_{\mathbb{R}^{d}}\vint_{B(x,R)}\lbrace 1-w(y)\rbrace
\left[ F(\langle \theta^{-}_{x,R,\bar{u}}(w),\phi_f \rangle) 
- F(\langle w,\phi_f \rangle) \right]\mathrm{d}y\mathrm{d}x \\
&&+ \vs\int_{\mathbb{R}^{d}}\vint_{B(x,R)^{2}}\Bigg\{ w(y_{1})w(y_{2}) 
+ \frac{1-\zeta(x)}{2}\Bigg[(w(y_{1})(1-w(y_{2})) \\
&& + (1-w(y_{2}))w(y_{2})\Bigg]\Bigg\}
\times\left[ F(\langle \theta^{+}_{x,R,\bar{u}}(w),\phi_f \rangle) 
- F(\langle w,\phi_f \rangle) \right]
\mathrm{d}y_{1}\mathrm{d}y_{2}\mathrm{d}x \\
&& +\vs\int_{\mathbb{R}^{d}}\vint_{B(x,R)^{2}}
\Bigg\{(1-w(y_{1}))(1-w(y_{2})) + \frac{1+\zeta(x)}{2}(w(y_{1})(1-w(y_{2})) \\
&& + (1-w(y_{1}))w(y_{2}) )\Bigg\}
\times\left[ F(\langle \theta^{-}_{x,R,\bar{u}}(w),\phi_f \rangle) 
- F(\langle w,\phi_f \rangle) \right]\mathrm{d}y_{1}\mathrm{d}y_{2}\mathrm{d}x.
\\
&:=&
\mathcal{L}^{neu}\psi_{F,f}(w)+ 
\mathcal{L}^{fsel}\psi_{F,f}(w,\zeta) \\ 
\end{eqnarray*}
We recognise the generator of the neutral spatial Lambda-Fleming-Viot process
(which does not depend on the environment)
plus two terms reflecting the selection events. If we set the environment to 
be identically equal to $1$, say, then the surviving selection term is precisely
that considered in  \cite{etheridge/veber/yu:2014} in modelling selection
in favour of type $A$ individuals.

After scaling (and a change of variables), 
the generator of $w_n$ acting on test functions
of this form is given by
\begin{align*}
\mathcal{L}_{n}\psi_{F,f}(w_n,\zeta)
&=n^{1 + \frac{d}{3}}(1-\vs_n)\left(\int_{\mathbb{R}^{d}}\overline{w}_n(x)
\left[ F(\langle \theta^{+}_{x,R_{n},u_{n}}(w_{n}),\phi_f \rangle) 
- F(\langle w_{n},\phi_f \rangle) \right]\mathrm{d}x\right.\\ 
  & \phantom{=n^{1 + \frac{d}{3}}} +\left.\int_{\mathbb{R}^{d}}\lbrace 1-\overline{w}_{n}(x)\rbrace 
\left[ F(\langle \theta^{-}_{x,R_{n},u_{n}}(w_{n}),\phi_f \rangle) 
- F(\langle w_{n},\phi_f \rangle) \right]\mathrm{d}x\right)\\
&+n^{1+\frac{d}{3}}
\vs_{n}\int_{\mathbb{R}^{d}}\left\lbrace\overline{w}_{n}^{2}(x) + 
(1-\zeta(x))\overline{w}_{n}(x)(1-\overline{w}_{n}(x))\right\rbrace\\
&\phantom{=n^{1 + \frac{d}{3}}=n^{1 + \frac{d}{3}}=n^{1 + \frac{d}{3}}\vs_{n}\int_{\mathbb{R}^{d}}}\times\left[ F(\langle \theta^{+}_{x,R_{n},u_{n}}(w_{n}),\phi \rangle) 
- F(\langle w_{n},\phi_f \rangle) \right]\mathrm{d}x\\
&+n^{1+\frac{d}{3}}
\vs_{n}\int_{\mathbb{R}^{d}}\left\lbrace(1-\overline{w}_{n}(x))^{2} +  
(1 + \zeta(x))\overline{w}_{n}(x)(1-\overline{w}(x)) \right\rbrace\\
&\phantom{=n^{1 + \frac{d}{3}}=n^{1 + \frac{d}{3}}=n^{1 + \frac{d}{3}}\vs_{n}\int_{\mathbb{R}^{d}}}\times\left[ F(\langle \theta^{-}_{x,R_{n},u_{n}}(w_{n}),\phi_f \rangle) 
- F(\langle w_{n},\phi_f \rangle) \right]\mathrm{d}x.
\end{align*}
By analogy with~\eqref{LFVres}, we write this as
\begin{align}\label{final gneator}
\mathcal{L}_{n}\psi_{F,f}
=(1-{\vs}_n)\mathcal{L}^{neu}_{n}\psi_{F,f} + 
n^{\alpha}\mathcal{L}^{fsel}_{n}\psi_{F,f},
\end{align}
where 
\begin{multline}
\label{neutral bit}
\mathcal{L}^{neu}_{n}\psi_{F,f}(w_n)=
n^{1 + \frac{d}{3}}\left(\int_{\mathbb{R}^{d}}
\overline{w}_{n}(x)\left[ F(\langle \theta^{+}_{x,R_{n},u_{n}}(w_{n}),
\phi_f \rangle) - F(\langle w_{n},\phi_f \rangle) \right]\mathrm{d}x\right.\\ 
\left.  + \int_{\mathbb{R}^{d}}\lbrace 1-\overline{w}_{n}(x)\rbrace
\left[ F(\langle \theta^{-}_{x,R_{n},u_{n}}(w_{n}),\phi_f \rangle) 
- F(\langle w_{n},\phi_f \rangle) \right]\mathrm{d}x\right),
\end{multline}
and, writing ${\vs}_n=n^\alpha \tilde{\vs}_n=n^\alpha{\vs}/n^{2/3}$, 
\begin{align*}
\mathcal{L}^{fsel}_{n}\psi_{F,f}(w_n,\zeta) &=
n^{1+\frac{d}{3}}\tilde{\vs}_n\int_{\mathbb{R}^{d}}
\left\lbrace\overline{w}_{n}(x) -\zeta(x)
\overline{w}_{n}(x)(1-\overline{w}_{n}(x))\right\rbrace \\
&\phantom{\mathcal{L}^{fsel}_{n}F(\langle w_{n},\phi_f \rangle)}\times \left[ F(\langle \theta^{+}_{x,R_{n},u_{n}}(w_{n}),\phi_f \rangle) - 
F(\langle w_{n},\phi_f \rangle) \right]\mathrm{d}x \nonumber\\
&+n^{1+\frac{d}{3}}\tilde{\vs}_{n}
\int_{\mathbb{R}^{d}}\left\lbrace(1-\overline{w}_{n}(x)) + 
\zeta(x)\overline{w}_{n}(x)(1-\overline{w}_{n}(x)) \right\rbrace\\
&\phantom{\mathcal{L}^{fsel}_{n}F(\langle w_{n},\phi_f \rangle)}\times \left[ F(\langle \theta^{-}_{x,R_{n},u_{n}}(w_{n}),\phi_f \rangle) 
- F(\langle w_{n},\phi_f \rangle) \right]\mathrm{d}x.
\end{align*}

Just as in the non-spatial case, in order to reduce the generator 
to the form~\eqref{kurtz rearrangement},
we appeal to the approach of \cite{kurtz:1973}.
Entirely analogously to our calculations of Section~\ref{nospace} (see \eqref{test_function_g}), we 
evaluate the generator on test functions of the form
\begin{equation}
\label{test function for Kurtz trick}
G_{F,f}(w_{n},\zeta) = \psi_{F,f}(w_{n}) 
+ n^{-\alpha}\mathcal{L}^{fsel}_{n}\psi_{F,f}(w_n,\zeta),
\end{equation}
where $F\in C^\infty(\IR)$ and $f\in C_c^\infty(\IR^d)$.
The second term on the right hand side will form part of $\epsilon_n^f$
in~(\ref{kurtz rearrangement}).
Evidently we must now account for the changing environment, but this is
just the generator of a pure jump process in which a new state is sampled
from a (mean zero) stationary distribution at the times of a Poisson process of 
rate $n^{2\alpha}\tau^{env}$.

Writing the full generator as ${\cal L}^{neu}+n^\alpha{\cal L}^{fsel}+
n^{2\alpha}{\cal L}^{env}$, as in~\eqref{LFVres},
we observe that ${\cal L}^{env}\psi_{F,f}=0$ (since $\psi_{F,f}$ is
independent of $\zeta$). 
In~\eqref{Evaluation of Lfsel on F} below, we show that 
$${\cal L}^{fsel}\psi_{F,f}(w_n,\zeta)
= -{\vs}\bar{u}V_R F^{\prime}(\langle w_n,\phi_f \rangle)
\int \zeta(x)\overline{w}_n(x)
\left(1-\overline{w}_n(x)\right)f(x)\mathrm{d}x
+\mathcal{O}\left(n^{-\frac{1}{3}}\right),$$
and since $\IE_\pi[\zeta]=0$,
$\IE_\pi[{\cal L}^{fsel}\psi_{F,f}] =\mathcal{O}(n^{-1/3})$.
Calculating exactly as in the non-spatial case (see \eqref{PH trick, non-spatial}), we find a spatial analogue of \eqref{limgen}, that is
\begin{align}\label{Limit formal evaluation step 1}
 \mathcal{L}_{n}G_{F,f}(w_{n},\zeta)+  
\mathcal{O}\left( n^{-\alpha}\right) &=  
\mathcal{L}^{neu}_{n}\psi_{F,f}(w_{n}) + 
\mathcal{L}^{fsel}_{n}
\left(\mathcal{L}^{fsel}_{n}\psi_{F,f}\right)(w_n,\zeta) 
\nonumber\\
& := \mathcal{L}^{neu}_{n}\psi_{F,f}(w_{n}) + 
\mathcal{G}_{n}\psi_{F,f}(w_{n},\zeta),
\end{align}
where we 
have used that $\alpha<1/6<1/3$ to absorb $\IE_\pi[{\cal L}^{fsel}\psi_{F,f}]$ 
into the error on the left hand side.
In this notation, for $F \in C^{\infty}(\mathbb{R})$ and 
$f \in C^{\infty}_{c}(\mathbb{R}^{d})$
\begin{align*}
\psi_{F,f}(w_n(t)) - 
\int_{0}^{t}\left[\mathcal{L}_{n}^{neu}
\psi_{F,f}\left(w_{n}(s)\right) 
+ \mathcal{G}_{n}\psi_{F,f}\left(w_{n}(s), \zeta(s)\right)\right]
\mathrm{d}s + \mathcal{O}\left( n^{-\alpha}\right)
\end{align*}
is a martingale. 

\subsubsection*{Neutral part}

First we find an approximation to the part of the generator
corresponding to neutral events. This mirrors the proof of Theorem~1.8 of
 \cite{etheridge/veber/yu:2014} and so we recall the strategy, but omit
many of the details. 

Taking the expression~(\ref{neutral bit}) for ${\cal L}_n^{neu}$,
we perform a Taylor expansion of $F$ about $\langle w_n, \phi_{f} \rangle$ 
to obtain
\begin{align*}
\int_{0}^{t}\mathcal{L}_{n}^{neu}\psi_{F,f}\left(w_{n}(s)\right)\mathrm{d}s 
=
(1 - \vs_{n})
  \int_{0}^{t}\left[ A_{n}(s) + B_{n}(s) + C_{n}(s)\right]\mathrm{d}s
   ,
\end{align*}
where $A_{n}(s)$, $B_{n}(s)$, $C_{n}(s)$ can be expressed as
\begin{align*}
A_{n}(s) &= u_{n}n^{1 + \frac d3}F'\left(\langle\overline{w}_{n}(s),f \rangle\right)
\int_{\mathbb{R}^{d}}\left[\overline{w}_{n}(s,x) 
\left\langle \ind_{B_{n}(x)}\left(1-w_{n}(s)\right),
\phi_{f}\right\rangle \right. \\
&\phantom{{}= u_{n}n^{1 + \frac d3}F'\left(\langle\overline{w}_{n}(t),f \rangle\right)\int_{\mathbb{R}^{d}}}
\left.-\left(1 - \overline{w}_{n}(s,x)\right)\left\langle \ind_{B_{n}(x)}w_{n}(s),
\phi_{f}\right\rangle \right] \mathrm{d}x\\
B_{n}(s) &= u_{n}^{2}n^{1 + \frac d3}\frac{F''\left(\langle\overline{w}_{n}(s),f \rangle\right)}{2}\int_{\mathbb{R}^{d}}\left[\overline{w}_{n}(s,x) 
\left\langle \ind_{B_{n}(x)}\left(1-w_{n}(s)\right),\phi_{f}\right\rangle^{2} 
\right. \\
&\phantom{{}= u_{n}n^{1 + \frac d3}F'\left(\langle\overline{w}^{n}(t),f \rangle\right)\int_{\mathbb{R}^{d}}}
\left.+\left(1 - \overline{w}_{n}(s,x)\right)\left\langle \ind_{B_{n}(x)}w_{n}(s),
\phi_{f}\right\rangle^{2} \right] \mathrm{d}x\\
C_{n}(s) &\leq \mathcal{O}(n^{-2d/3}),
\end{align*}
where we have used \eqref{theta+ w} and \eqref{theta- w} and $C_n$
corresponds to the third order term in the Taylor expansion.

The key to controlling $A_n$ and $B_n$ is a Taylor expansion of $\phi_f$.
First we use Fubini's Theorem to write
\begin{align*}
A_{n}(s) &= 
u_{n}n^{1 + \frac d3}F'\left(\langle\overline{w}_{n}(s),f \rangle\right)\int_{\mathbb{R}^{d}}\left\lbrace \overline{w}_{n}(s,x) \langle\ind_{B_{n}(x)},\phi_{f}\rangle  -  \langle \ind_{B_{n}(x)}w_{n}(s),\phi_{f}\rangle\right\rbrace\mathrm{d}x
\\
& = u_{n}n^{1 + \frac d3}F'\left(\langle\overline{w}_{n}(s),f \rangle\right)
\int_{\mathbb{R}^{d}}\left[\frac{n^{\frac d3}}{V_{R}} 
\int_{\IR^d}\int_{\IR^d} \ind_{\{|x-y|<R_{n}\}}w_n(s,y)\ind_{\{|x-z|<R_{n}\}}\phi_{f}(z)\mathrm{d}y\mathrm{d}z \right.\\ 
&\phantom{= u_{n}n^{1 + \frac d3} u_{n}n^{1 + \frac d3} u_{n}n^{1 + \frac d3}F'\left(\langle\overline{w}_{n}(s),f \rangle\right)
\int_{\mathbb{R}^{d}}}  \left.- \int_{\IR^d} \ind_{\{|x-y|<R_{n}\}}w_n(s,y)\phi_{f}(y)\mathrm{d}y\right] \mathrm{d}x \\
&= u_{n}n^{1 + \frac d3}F'\left(\langle\overline{w}_{n}(s),f \rangle\right)
\int_{\mathbb{R}^{d}}\frac{n^{\frac d3}}{V_{R}} \int_{\IR^d}\int_{\IR^d} 
\ind_{\{|x-y|<R_{n}\}}w_n(s,y)\ind_{\{|x-z|<R_{n}\}}\\
&\phantom{u_{n}n^{1 + \frac d3}F'\left(\langle\overline{w}_{n}(s),f \rangle\right)
\int_{\mathbb{R}^{d}}\frac{n^{\frac d3}}{V_{R}} \int_{\IR^d}\int_{\IR^d} 
\ind_{\{|x-y|<R_{n}\}}w_n(s,y)}\left(\phi_{f}(z)-\phi_{f}(y)\right)\mathrm{d}y\mathrm{d}z\mathrm{d}x.
\end{align*}
Now Taylor-expand $\phi_{f}$ around $y$ to obtain
\begin{multline}\label{hesjany}
A_{n}(s) = u_{n}n^{1 + \frac d3}F'\left(\langle\overline{w}_{n}(s),f \rangle\right)
\int_{\mathbb{R}^{d}}\left[\frac{n^{\frac d3}}{V_{R}} 
\int\int \ind_{\{|x-y|<R_{n}\}}w_n(s,y)\ind_{\{|x-z|<R_{n}\}}\right. 
\\
 \left.\times \left[D\phi_{f}(y)(z-y) + \frac12 (z-y){\mathbf H}(y)\phi_{f}(z-y) + \mathcal{O}\left(|z-y|^{3}\right)\ind_{y\in S_{f}}  \right]\right]\mathrm{d}y\mathrm{d}z\mathrm{d}x,
\end{multline}
where ${\mathbf H}$ denotes the Hessian. 
Using anti-symmetry of $z-y$, the term involving the gradient vanishes, as
do the off-diagonal terms in the Hessian, so 
that~\eqref{hesjany} is equal to
\begin{align*}
u_{n}n^{1 + \frac d3}F'\left(\langle\overline{w}_{n}(s),f \rangle\right)
\int_{\mathbb{R}^{d}}\frac{n^{\frac d3}}{V_{R}} 
\int_{\mathbb{R}^{d}}\int_{\mathbb{R}^{d}} 
\ind_{\{|x-y|<R_{n}\}}\ind_{\{|x-z|<R_{n}\}}w_{n}(s,y)\\
\times\frac12 \sum_{i=1}^{d}(z_{i} - y_{i})^{2}\frac{\partial^{2}}{\partial y_{i}^{2}}\phi_{f}(y) \mathrm{d}y\mathrm{d}z\mathrm{d}x,
\end{align*}
plus a lower order term which is bounded uniformly in the time variable $s$ by 
$\mathtt{Vol}(S_f)u_nC_f$ (where $C_f$ is a bound on the third
derivative of $f$ and $S_f$ is the support of $f$).
The analogue of~\eqref{taylor of phi} with $\phi_{f}$ replaced by  
$\frac{\partial^2 \phi_f}{\partial y_i^2}$ is
$$ \frac{\partial^2 }{\partial y_i^2}\phi_f(y)
=\frac{\partial^2 f}{\partial y_i^2}(y)+\mathcal{O}(n^{-2/3}
\ind_{B_n(y)\cap S_f\neq\emptyset}),$$
and (using~(\ref{<w,phif> = <barw,f>})
with $\Delta f$ in place of $f$) we deduce that
\begin{align}
A_{n}(s) & = u_{n}n^{1 + \frac d3}F'
\left(\langle\overline{w}_{n}(s),f \rangle\right)\int_{\mathbb{R}^{d}}
\frac{n^{\frac d3}}{V_{R}} \int_{\mathbb{R}^{d}}\int_{\mathbb{R}^{d}} 
\ind_{\{|x-y|<R_{n}\}}\ind_{\{|x-z|<R_{n}\}}w_{n}(s,y) 
\nonumber \\
&\phantom{A_{n}(s)  = u_{n}n^{1 + \frac d3}} \times\frac12 \sum_{i=1}^{d}(z_{i} - y_{i})^{2}
\frac{\partial^{2}}{\partial y_{i}^2}{f}(y) 
\mathrm{d}y\mathrm{d}z\mathrm{d}x + \mathcal{O}\left(n^{-\frac23}C\right)\nonumber\\
&=  \frac{\bar{u}\Gamma_R}{2}F'\left(\langle\overline{w}_{n}(s),f \rangle\right)
\int_{\mathbb{R}^{d}} w_{n}(s,y)\Delta f\mathrm{d}y  + 
\mathcal{O}\left(n^{-\frac23}C\right) \nonumber \\ 
&= \frac{\bar{u}\Gamma_R}{2}F'\left(\langle\overline{w}_{n}(s),f \rangle\right)
\langle \overline{w}_{n}(s),\Delta f \rangle  + 
\mathcal{O}\left(n^{-\frac23}C\right)
\end{align}
where
\begin{equation}
\label{defn of Gamma}
\Gamma_R= \frac{1}{V_R}\int_{B(0,R)}\int_{B(x,R)}(z_1)^2\mathrm{d}z\mathrm{d}x,
\end{equation}
and $z_1$ is the first component of the vector $z\in\IR^d$.
The error term tends to zero uniformly in $s$ and since 
$|\langle \overline{w}_n(s),f\rangle|\leq \|f\|_\infty \mathtt{Vol}(S_f)$,
we have that $|A_n(s)|$ is uniformly bounded in $s$ and $n$.
The contribution coming from $B_{n}(s)$ can be treated similarly:
\begin{multline}\label{estimate for Bn(s)}
B_{n}(s) =
 u_{n}^{2}n^{1 + \frac d3}
\frac{F''\left(\langle\overline{w}_{n}(s),f \rangle\right)}{2} 
\\
\times
\int_{\mathbb{R}^{d}}\overline{w}_{n}(s,x)\left\langle\ind_{B_{n}(x)}\big(1 - w_{n}(s)\big),
\phi_{f}\right\rangle^{2} + (1 - \overline{w}_{n}(s,x))\left\langle\ind_{B_{n}(x)}{w}_{n}(s),\phi_{f}\right\rangle^{2} \mathrm{d}x
\\
=
u_{n}^{2}n^{1 + \frac d3}
\frac{F''\left(\langle\overline{w}_{n}(s),f \rangle\right)}{2} 
\int_{\mathbb{R}^{d}}
\big\lbrace
\overline{w}_{n}(s,x)\left\langle\ind_{B_{n}(x)},
\phi_{f}\right\rangle^{2}
\\
-
2\overline{w}_{n}(s,x)\left\langle\ind_{B_{n}(x)},
\phi_{f}\right\rangle
\left\langle w_{n}(s)\ind_{B_{n}(x)},
\phi_{f}\right\rangle
+
\left\langle w_{n}(s)\ind_{B_{n}(x)},
\phi_{f}\right\rangle^{2}
\bigg\rbrace
\mathrm{d}x
\\
=
u_{n}^{2}n^{1 + \frac d3}
\frac{F''\left(\langle\overline{w}_{n}(s),f \rangle\right)}{2} 
\int_{\mathbb{R}^{d}}
\bigg\lbrace
\overline{w}_{n}(s,x)\left\langle\ind_{B_{n}(x)},
\phi_{f}\right\rangle
\big(
\left\langle\ind_{B_{n}(x)},
\phi_{f}\right\rangle
-
\left\langle\ind_{B_{n}(x)}w_{n}(s),
\phi_{f}\right\rangle
\big)
\\
-
\left\langle\ind_{B_{n}(x)}w_{n}(s),
\phi_{f}\right\rangle
\big(\overline{w}_n(s,x)
\left\langle\ind_{B_{n}(x)},
\phi_{f}\right\rangle
-
\left\langle\ind_{B_{n}(x)}w_{n}(s),
\phi_{f}\right\rangle
\big)
\bigg\rbrace
\mathrm{d}x
\\
\leq u_{n}^{2}n^{1 + \frac d3}\frac{F''\left(\langle\overline{w}_{n}(s),
f \rangle\right)}{2} \int_{S_f}2\mathtt{Vol}\left( B_{n}(x)\right)^{2}\|f\|_{\infty}^{2}\mathrm{d}x \leq Cn^{\frac{1-d}{3}},
\end{multline}
which implies that  the sequence of quadratic variations is not 
only tight, but also, for dimensions $d \geq 2$, it tends to $0$ 
uniformly on compact time sets. 
In other words, any randomness remaining in the limit in $d\geq 2$ will be
due to the fluctuations in the environment. 

In $d = 1$, using~\eqref{taylor of phi}
and a further Taylor expansion to approximate $\phi_f(y)$ by $f(x)$ for
$y\in B_n(x)$ (up to an error of order $n^{-1/3}$) and that 
$n^{1/3}u_n=\overline{u}$, $n^{1/3}\mathtt{Vol}(B_n)=V_R$, we have
\begin{equation}
\label{Bn(s) in d=1}
B_n(s)=\overline{u}^2V_R^2
\frac{F''\left(\langle\overline{w}_{n}(s), f \rangle\right)}{2} 
\int_{\mathbb R}\overline{w}_n(s,x)\left(1-\overline{w}_n(s,x)\right)f(x)^2\mathrm{d}x
+
\mathcal{O}\left(n^{-\frac13}\right).
\end{equation}
It is this term that (via an application of 
Proposition~\ref{continuity proposition}) leads to the final term 
in~\eqref{Martingale Problem for original model 1d}.

\subsubsection*{Fluctuating part}

In order to tackle the part of the generator which describes the 
fluctuating selection, we need to understand the action of the operator 
$\mathcal{L}^{fsel}_{n}$ on 
$\mathcal{L}^{fsel}_{n}\psi_{F,f}(w_n,\zeta)$.
It is here that we must diverge from \cite{etheridge/veber/yu:2014}.

As before, 
although the test function $\psi_{F,f}(w_n)$ does not depend 
on the environment, the result of applying 
$\mathcal{L}^{fsel}_{n}$ to it is 
a function that {\em does} depend on the environment, to which
we must then apply $\mathcal{L}^{fsel}_n$. Recall that 
$\tilde{\vs}_n=\vs/n^{2/3}$.
We take a Taylor expansion of $F$ about $\langle w_n,\phi_f\rangle$ to obtain 
\begin{align}\label{Lfsel n evaluation}
&\mathcal{L}^{fsel}_{n}\psi_{F,f}(w_{n},\zeta) + 
\mathcal{O}\left( n^{-\frac{1}{3}} C_f\right) \nonumber\\
&= n^{1 + \frac{d}{3}}\tilde{\vs}_{n}u_{n}\left[\int_{\mathbb{R}^{d}}
\left\lbrace\overline{w}_{n}(x)  -\zeta(x)\overline{w}_{n}(x)(1-\overline{w}_{n}(x))
\right\rbrace F^{\prime}(\langle w_n,\phi_{f} \rangle)
\langle \ind_{B_{n}(x)}(1-w_n),\phi_{f} \rangle\mathrm{d}x  \right.\nonumber\\
& -\left.\int_{\mathbb{R}^{d}}\left\lbrace(1-\overline{w}_{n}(x)) + 
\zeta(x)\overline{w}_{n}(x)(1-\overline{w}_{n}(x))\right\rbrace 
F^{\prime}(\langle w_n,\phi_f \rangle)\langle \ind_{B_{n}(x)}w_{n},\phi_{f} \rangle\mathrm{d}x  \right], 
\end{align}
(plus lower order terms)
where we have used \eqref{theta+ w} and \eqref{theta- w}.  

Now using our scaling, \eqref{taylor of phi} and 
\eqref{<w,phif> = <barw,f>}, we obtain
\begin{align}
&\mathcal{L}^{fsel}_{n}\psi_{F,f}(\overline{w}_{n},\zeta) \nonumber\\
&= n^{\frac{d}{3}}{\vs}\bar{u}F^{\prime}(\langle w_n,\phi_f \rangle)
\left[\int_{\mathbb{R}^{d}}\lbrace\overline{w}_{n}(x) - \zeta(x)
\overline{w}_{n}(x)(1-\overline{w}_{n}(x))\rbrace\right. \nonumber \\
&\phantom{n^{\frac{d}{3}}suF^{\prime}(\langle w,\phi \rangle,\zeta) n^{\frac{d}{3}}suF^{\prime}(\langle w,\phi \rangle)}
\times \left\langle \ind_{B_{n}(x)}(1-w_n),f + 
\mathcal{O}\left(|x-y|\right)\ind_{x\in S_f}\right\rangle\mathrm{d}x  \nonumber\\
&-\left.\int_{\mathbb{R}^{d}}\left\lbrace(1-\overline{w}_{n}(x)) + 
\zeta(x)\overline{w}_{n}(x)(1-\overline{w}_{n}(x))\right\rbrace\left\langle 
\ind_{B_{n}(x)}w_n,\left[f + \mathcal{O}\left(|x-y|\right)\ind_{x\in S_f}\right] \right\rangle\mathrm{d}x  
\right] \nonumber \\
&\phantom{AAAAAAAAAAAAAAAAAAAAAAAAAAAAAAAAAAAAAA}
+ \mathcal{O}\left(n^{-\frac{1}{3}}\right)\nonumber\\  
&= -{\vs}\bar{u}V_R F^{\prime}(\langle w_{n},\phi_f \rangle)
\int_{\mathbb{R}^{d}}\zeta(x)\overline{w}_{n}(x)(1-\overline{w}_{n}(x))f(x)
\mathrm{d}x + \mathcal{O}\left(n^{-\frac{1}{3}}\right) \nonumber \\
&= -{\vs}\bar{u}V_R F^{\prime}(\langle w_n,\phi_f \rangle)
H_f(\overline{w}_{n},\zeta) + 
\mathcal{O}\left(n^{-\frac{1}{3}}\right),
\label{Evaluation of Lfsel on F}
\end{align}
where 
$$H_f(\overline{w}_n, \zeta)=\int \zeta(x)\overline{w}_n(x)
\left(1-\overline{w}_n(x)\right)f(x)\mathrm{d}x.$$
We now need to evaluate ${\cal L}_n^{fsel}$ on the product
in~(\ref{Evaluation of Lfsel on F}).
\begin{multline}
\label{sum of two terms}
\mathcal{L}^{fsel}_{n}\left(\mathcal{L}^{fsel}_{n}\psi_{F,f}\right)
(w_n,\zeta) + \mathcal{O}\left( n^{-\frac{1}{3}}\right)  
 \\
 = 
-{\vs}\bar{u}V_R\mathcal{L}^{fsel}_{n}\left(\psi_{F^{\prime},f}
\right)(w_n,\zeta) 
H_f(\overline{w}_{n},\zeta) - 
{\vs}\bar{u}V_R F^{\prime}(\langle w_n,\phi_f \rangle)
\left(\mathcal{L}^{fsel}_{n}
H_f\right)(\overline{w}_{n},\zeta). 
\end{multline}
As in the non-spatial case, these two terms will correspond to the 
diffusion and drift terms respectively in the stochastic p.d.e..

The difficulty that we now face is that $H$ depends on 
$\langle\overline{w}^2,f\rangle$ as well as 
$\langle\overline{w},f\rangle$ and it will no longer suffice to use the 
test function $\phi_f$ and integrate by parts. 
Let us consider the effect on $\overline{w}_n(x)^2$ of an event in the
ball $B_n(y)$ in which the parent is of type $a$. 
After the event $\overline{w}_n(x)^2$ is replaced by
\begin{multline}
\label{change in wsquared}
\left(\overline{w}_n(x)+\frac{n^{d/3}}{V_R}\int_{B_n(y)} 
\ind_{\{|z-x|<R_n\}}u_n(1-w_n(z))\mathrm{d}z\right)^2
\\
=\overline{w}_n(x)^2
+2u_n\overline{w}_n(x)\frac{n^{d/3}}{V_R}\int_{B_n(y)}\vint_{B_n(y)} 
\ind_{\{|z-x|<R_n\}} (1-w_n(z))\mathrm{d}z
\\
+\mathcal{O}(u_n^2\ind_{\{|x-y|<2R_n\}}).
\end{multline}

The corresponding change in $\overline{w}_{n}(x)(1-\overline{w}_{n}(x))$ is then
$$(1-2\overline{w}_n(x))\frac{n^{d/3}}{V_R}
\int_{B_n(y)}\ind_{\{|z-x|<R_n\}}
u_n(1-w_n(z))\mathrm{d}z
+\mathcal{O}(u_n^2\ind_{\{|x-y|<2R_n\}}).
$$
If the parent is type $A$, then the change is
$$-(1-2\overline{w}_n(x))
\frac{n^{d/3}}{V_R}\int_{B_n(y)}\ind_{\{|z-x|<R_n\}} 
u_nw_n(z)\mathrm{d}z
+\mathcal{O}(u_n^2)\ind_{\{|x-y|<2R_n\}}).$$
To evaluate the second term in~\eqref{sum of two terms},
we calculate
the effect of an event in $B_n(y)$ and integrate with respect to $y$,
taking into account only the first order change in $\overline{w}_n^2(x)$
as given in~\eqref{change in wsquared}. 
We can now observe that the second term in \eqref{sum of two terms} can be written as 
\begin{align}
{\cal L}&_n^{fsel}\Big(\int_{\IR^d} \zeta(x)\overline{w}_n(x)\left(1-\overline{w}_n(x)\right)
f(x)\mathrm{d}x\Big) \nonumber
\\
&=n^{1+\frac{d}{3}}\tilde{\vs}_n
\int_{\IR^d}\int_{\IR^d}\int_{\IR^d}
\lbrace \overline{w}_n(y)-\zeta(y)\overline{w}_n(y)
\left(1-\overline{w}_n(y)\right)\rbrace
\zeta(x)
\nonumber
\\ 
&\phantom{n^{1+\frac{d}{3}}n^{1+\frac{d}{3}}}\times 
\frac{n^{d/3}u_n}{V_R}
\ind_{\{|x-z|<R_n\}}
\ind_{\{|y-z|<R_n\}}\left(1-w_n(z)\right)\left(1-2\overline{w}_n(x)\right)
f(x)\mathrm{d}z\mathrm{d}y\mathrm{d}x
\nonumber
\\
&-n^{1+\frac{d}{3}}\tilde{\vs}_n
\int_{\IR^d}\int_{\IR^d}\int_{\IR^d}\lbrace (1-\overline{w}_n(y))+\zeta(y)\overline{w}_n(y)
\left(1-\overline{w}_n(y)\right)\rbrace
\zeta(x)
\nonumber
\\
&\phantom{n^{1+\frac{d}{3}}}\times 
\frac{n^{d/3}u_n}{V_R}
\ind_{\{|x-z|<R_n\}}
\ind_{\{|y-z|<R_n\}}w_n(z)\left(1-2\overline{w}_n(x)\right)
f(x)\mathrm{d}z\mathrm{d}y\mathrm{d}x +{\mathcal O}(n^{-\frac13})
\label{eq 8.23.5}
\end{align}
We can further simplify this expression to
\begin{align}
&{\cal L}_n^{fsel}\Big(\int_{\IR^d} \zeta(x)\overline{w}_n(x)\left(1-\overline{w}_n(x)\right)
f(x)\mathrm{d}x\Big) \nonumber
\\
&=-n^{1+\frac{d}{3}}\tilde{\vs}_n\frac{ n^{d/3}u_n}{V_R}
\int_{\IR^d}\int_{\IR^d}\int_{\IR^d} \zeta(x)\zeta(y)
 \ind_{\{|x-z|<R_n\}}\ind_{\{|y-z|<R_n\}} 
 \nonumber
\\
&\phantom{n^{1+\frac{d}{3}}n^{1+\frac{d}{3}}blablablabla}
\times \left(1-2\overline{w}_n(x)\right)f(x)
\overline{w}_n(y)\left(1-\overline{w}_n(y)\right)
\mathrm{d}z\mathrm{d}y\mathrm{d}x 
\nonumber
\\
&+
n^{1+\frac{d}{3}}\tilde{\vs}_n\frac{ n^{d/3}u_n}{V_R}
\int_{\IR^d}\int_{\IR^d}\int_{\IR^d} \bigg\lbrace\zeta(x)\ind_{\{|x-z|<R_n\}}\ind_{\{|y-z|<R_n\}}
\nonumber
\\
& \phantom{blablablablablabalbla}\times\left(1-2\overline{w}_n(x)\right)f(x)\left(\overline{w}_n(y)-
w_n(z)\right)\bigg\rbrace\mathrm{d}z\mathrm{d}y\mathrm{d}x+
{\mathcal O}(n^{-\frac13})
\nonumber
\\
&= -
\bar{u}\vs V_R\int \zeta(y)^2
\overline{w}_n(y)\left(1-\overline{w}_n(y)\right)
\left(1-2\overline{w}_n(y)\right)f(y)\mathrm{d}y
\nonumber
\\
&+
\bar{u}\vs V_R\int \vint_{B_n(y)}\zeta(y)^2
2\left(\overline{w}_n(x)-(\overline{w}_n(y)\right)
\overline{w}_n(y)
\left(1-\overline{w}_n(y)\right)
f(y)\mathrm{d}y
\nonumber
\\
&+\bar{u}\vs V_R\int \vint_{B_n(y)}\Big(\zeta(y)^2-\zeta(x)\zeta(y)\Big)
\overline{w}_n(y)\left(1-\overline{w}_n(y)\right)
\left(1-2\overline{w}_n(x)\right)f(y)\mathrm{d}x\mathrm{d}y
\label{tricky term in lsel}\\
&\phantom{+\bar{u}\vs V_R\int \vint_{B_n(x)}\Big(\zeta(x)^2-\zeta(x)\zeta(y)\Big)
\overline{w}_n(x)\left(1-2\overline{w}_n(x)\right)} 
+
{\mathcal O}(n^{-\frac13})
,
\nonumber
\end{align}
where we have written 
$$(1-2\overline{w}_n(x))=(1-2\overline{w}_n(y))
+2(\overline{w}_n(y)-\overline{w}_n(x)),$$
performed a Taylor expansion of $f$ around $y$ for $|y-x|<R_n$, and
used Fubini's Theorem.

Since $F'(\langle w_n,\phi_f\rangle)$ is independent of $\zeta$, the first term
in~\eqref{sum of two terms} can be read off immediately from our 
previous calculations, and combining all the above, we see that we should 
like to take $A$ in Theorem~\ref{kurtzaveraging} to be 
\begin{multline}
\label{form_of_A}
AF(\langle \overline{w},f\rangle)=
F'(\langle \overline{w},f\rangle)\Big\{\langle\overline{w},
\frac{\bar{u}\Gamma_R\Delta f}{2}\rangle+
\langle \bar{u}^2\vs^2V_R^2\overline{w}(1-\overline{w})(1-2\overline{w}),f
\rangle\Big\}\\
+
F''(\langle \overline{w},f\rangle)\Big\{
\bar{u}^2\vs^2V_R^2\langle \zeta\overline{w}(1-\overline{w},f\rangle^2
+\ind_{d=1}\frac{\bar{u}^2V_R^2}{2}\langle \overline{w}(1-\overline{w}), f^{2}\rangle\Big\},
\end{multline}
and
\begin{multline}
\label{epsilonnf}
\epsilon_n^f(t)= 
\bar{u}^2\vs^2 V_R^2 \int_{0}^{t}\bigg\lbrace F'(\langle\overline{w}_n(s),f\rangle)
\int_{\mathbb{R}^{d}} \vint_{B_n(y)}\Big(\zeta(s,y)^2-\zeta(s,x)\zeta(s,y)\Big)
\\
\times \overline{w}_n(s,y)\left(1-\overline{w}_n(s,y)\right)
\left(1-2\overline{w}_n(s,x)\right)f(y)\bigg\rbrace\mathrm{d}x\mathrm{d}y\mathrm{d}s
\\
+\bar{u}^2\vs^2 V_R^2 \int_{0}^{t}\bigg\lbrace F'(\langle\overline{w}_n(s),f\rangle)
\int_{\mathbb{R}^{d}} \vint_{B_n(y)}\Big(\overline{w}_n(s,x)-
\overline{w}_n(s,y)\Big)
\\
\times \overline{w}_n(s,y)\left(1-\overline{w}_n(s,y)\right)
f(y)\bigg\rbrace\mathrm{d}x\mathrm{d}y\mathrm{d}s
+Ct{\mathcal O}(
n^{-\alpha}),
\end{multline}
where the term of order $n^{-\alpha}$ comes from the second term
on the right of~(\ref{test function for Kurtz trick}) and we have
used that $\alpha<1/6$ to absorb a term of order $n^{-1/3}$.
The additional term in $A$ in $d=1$ stems from the term $B_n(s)$ 
in~(\ref{Bn(s) in d=1}).
Since $A$ is uniformly bounded,~(\ref{kurtzbound})
is obviously satisfied.
To check that $\IE[\sup_{t\leq T}|\epsilon_n^f(t)|]\rightarrow 0$ as 
$n\rightarrow\infty$, we must control the integral terms in~(\ref{epsilonnf}).
We need to use the continuity of $\overline{w}_n(x)$. As shown in 
the proof of Proposition~\ref{continuity proposition} 
(see Appendix~\ref{continuity estimates}, \eqref{ctty bound eq}), 
for any fixed constant $\lambda > 0$,
\begin{equation}
\label{ctty eq}
\IE\left[\sup_{s\leq t}|\overline{w}_n(s,x)-\overline{w}_n(s,y)|\right]\leq Cn^{-1/12}e^{\lambda |x|},
\qquad\mbox{ for }|x-y|<2R_n.
\end{equation}
This immediately controls the expectation of the supremum of 
the second integral in~\eqref{epsilonnf}.

To control the first integral in~\eqref{epsilonnf}, first observe that
for $|x-y|<R_n$, by~(\ref{regularity of g}), 
$$\IE[\zeta(y)^2-\zeta(x) \zeta(y)]=\Big(g(y,y)-g(x,y)\Big)\leq CR_n,$$
so that writing
$$I(s)=
\int \vint_{B_n(y)}\Big(\zeta(s,y)^2-\zeta(s,x)\zeta(s,y)\Big)
\overline{w}_n(s,y)\left(1-\overline{w}_n(s,y)\right)
\left(1-2\overline{w}_n(s,y)\right)f(y)\mathrm{d}y,$$
we have
$$\IE_\pi\left[I(s)\right]
\leq C n^{-1/3}.$$
Let us write $\tau_i, i=1,\ldots ,N_T$ for the times at which the environment
is resampled in the time interval $[0,T]$, and setting $\tau_0=0$ and
$\tau_{N_T+1}=T$, write 
$$S_i=\sup\{I(s): s\in (\tau_{i-1}, \tau_{i}]\},\quad i=1,\ldots, N_T+1.$$
By Markov's inequality, $\IP[S_i>1/n^\delta]\leq Cn^{-1/3+\delta}$ and 
$$\IP\left[\sup_{1\leq i\leq N_T+1}S_i>\frac{1}{n^\delta}\right]\leq 
\IE\left[N_T+1\right]
\IP\left[S_1>\frac{1}{n^\delta}\right]\leq Cn^{2\alpha-\frac{1}{3}+\delta}T.$$
Thus for $\delta>0$, 
$$\IE\left[\sup_{1\leq i\leq N_{T}+1}S_i\right]\leq C\left(\frac{1}{n^\delta}
+
\IP\left[\sup_{1\leq i\leq N_T+1}S_i>\frac{1}{n^\delta}\right]\right)
\leq C\left(\frac{1}{n^\delta} + n^{2\alpha+\delta -\frac{1}{3}}\right).$$
Since $\alpha<1/6$, we may choose $\delta>0$ in such a way that the right
hand side tends to zero, and so
we conclude that for fixed $T$,
$$\IE\left[\sup_{t\leq T}|\epsilon_n^f(t)|\right]\rightarrow 0$$
as $n\rightarrow\infty$, as required.

\begin{remark}
\label{breakdown for white noise}
The estimate on $\IE\left[\sup_{t\leq T}|\epsilon_n^f(t)|\right]$ 
depends on the Lipschitz bound on the correlation kernel $g(x,y)$. 
This is precisely the place where our argument breaks down if we allow 
the environmental noise to be `white'.
\end{remark}

The only difficulty that we still face is that the compact
containment condition, and hence tightness of our sequence of 
processes, is in the space ${\cal M}_\lambda$, and since 
$\langle \overline{w}_n^2,f\rangle$ cannot be written as an integral with
respect to the measure $\overline{M}_n$ or $\overline{M}_n^{\otimes 2}$, convergence of 
$\langle \overline{w}_n^2,f\rangle$ to $\langle (w^\infty)^2,f\rangle$
is not a simple consequence of the convergence of the sequence of 
measure-valued evolutions. Evidently, the term in $\overline{w}_n^3$
suffers from the same problem. 
However, the proof of Theorem~\ref{scaling result} will be complete, if we 
can prove the following proposition.
\begin{proposition}
\label{continuity proposition}
For $k=2,3$, we have, for each fixed $s$,
\begin{equation}
\limsup_{n\rightarrow\infty}\IE\left[\left|\langle \overline{w}_n^k(s),
f\rangle-\langle (w^\infty)^k(s),f\rangle\right|\right]=0.
\end{equation}
\end{proposition}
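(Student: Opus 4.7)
The obstacle is that compactness holds in $\mathcal M_\lambda$ with the vague topology, under which the nonlinear functional $M\mapsto \langle w^k,f\rangle$ is not continuous for $k\geq 2$. My plan is to interpose an intermediate object at a \emph{fixed} macroscopic spatial scale $\rho>0$ (independent of $n$), which is multilinear in the measure and coincides with $\overline{w}_n$ up to a mollification error that vanishes uniformly in $n$ as $\rho\downarrow 0$. Set
$$w_n^{(\rho)}(s,x):=\vint_{B(x,\rho)}\overline{w}_n(s,y)\,dy, \qquad w^{\infty,(\rho)}(s,x):=\vint_{B(x,\rho)}w^\infty(s,y)\,dy.$$
By Fubini,
$$\langle (w_n^{(\rho)})^k(s),f\rangle=\int_{(\mathbb R^d)^k}\Phi_{\rho,f}(y_1,\ldots,y_k)\,\overline{M}_n(s)^{\otimes k}(dy_1\cdots dy_k),$$
with the bounded, compactly supported kernel $\Phi_{\rho,f}(y_1,\ldots,y_k)=V_\rho^{-k}\int_{\bigcap_i B(y_i,\rho)}f(x)\,dx$, which is \emph{continuous} in $(y_1,\ldots,y_k)$. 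Consequently, working on a probability space on which Skorokhod's representation makes $\overline{M}_n(s)\to M^\infty(s)$ almost surely, bounded convergence yields, for each fixed $\rho>0$,
$$\IE\bigl[\,|\langle (w_n^{(\rho)})^k(s),f\rangle - \langle (w^{\infty,(\rho)})^k(s),f\rangle|\,\bigr]\longrightarrow 0 \qquad (n\to\infty).$$

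For the mollification error, the elementary bound $|a^k-b^k|\leq k|a-b|$ on $[0,1]$ gives
$$\bigl|\langle \overline{w}_n^k(s),f\rangle-\langle (w_n^{(\rho)})^k(s),f\rangle\bigr|\leq k\int |f(x)|\vint_{B(x,\rho)}|\overline{w}_n(s,x)-\overline{w}_n(s,y)|\,dy\,dx,$$
with an analogous estimate where $w^\infty$ replaces $\overline{w}_n$. An $\varepsilon/3$ argument then completes the proof: take $\rho$ small enough that both mollification errors are at most $\varepsilon/3$ in expectation uniformly in $n$, then let $n\to\infty$ using the previous paragraph to beat the smoothed difference below $\varepsilon/3$ for all large $n$.

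The main obstacle is thus a uniform-in-$n$ modulus-of-continuity bound of the form $\IE[|\overline{w}_n(s,x)-\overline{w}_n(s,y)|]\leq C(|x-y|)e^{\lambda|x|}$ with $C(\rho)\to 0$ as $\rho\to 0$, at a \emph{macroscopic} scale $\rho\gg R_n$. The displayed estimate \eqref{ctty eq} only controls increments at the microscopic scale $R_n=n^{-1/3}R$, and naively chaining $\rho/R_n$ copies of it introduces a factor of order $n^{1/3}$ that is ruinous. I would therefore establish the needed bound intrinsically at scale $\rho$ in Appendix~\ref{continuity estimates}, via a Kolmogorov-type moment argument that exploits the Poisson jump structure of $\overline{w}_n$ together with the $L^\infty$ bound; the corresponding macroscopic regularity of $w^\infty$ (needed for the analogous estimate on the limit side) follows from the Hölder regularity of weak solutions to the limiting SPDE.
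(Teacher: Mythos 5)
Your reduction is exactly the one the paper uses: interpose a mollification at a fixed macroscopic scale, observe that the mollified quantity is a multilinear functional of $\overline{M}_n(s)^{\otimes k}$ with a continuous compactly supported kernel (so it converges by weak convergence of the measures), and control the mollification error by a uniform-in-$n$ first-moment modulus of continuity of $\overline{w}_n$ at scale $\rho$. You have also correctly diagnosed that the microscale estimate \eqref{ctty eq} (stated for $|x-y|<2R_n$) cannot simply be chained up to scale $\rho$. Up to this point your argument matches Appendix~\ref{continuity estimates} step for step.

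The gap is that the entire content of the proposition lies in the macroscopic estimate you defer, and the method you sketch for it does not engage with where that regularity actually comes from. A bound of the form $\IE[|\overline{w}_n(s,x)-\overline{w}_n(s,y)|]\leq C(|x-y|^{1/4}+n^{-1/12})e^{\lambda|x|}$, valid at all separations, is precisely what the paper proves as \eqref{ctty bound eq}, and it is \emph{not} a consequence of ``the Poisson jump structure together with the $L^\infty$ bound'': at a fixed time $s>0$ the spatial regularity of $\overline{w}_n$ is produced by the smoothing of the (approximate) heat semigroup generated by the neutral events, not by any pathwise or moment control of the jumps. A Kolmogorov-type argument would require bounding $\IE[|\overline{w}_n(s,x)-\overline{w}_n(s,y)|^p]$, and to extract any positive power of $|x-y|$ from that expectation one is forced back to a semigroup representation. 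The paper does this via the Mueller--Tribe device: time-dependent test functions $\phi^n_{t-s}$ chosen so that $\mathcal{L}^{neu}_n\langle w_n(s),\phi^n_{t-s}\rangle=0$, giving $w_n(t,x)=M_t(\phi^n_\cdot(\cdot,x))+\int_0^t\int\phi^n_{t-s}(z,x)v_n(s,z)\,\mathrm{d}z\,\mathrm{d}s+\mathcal{O}(n^{-1/3})$, after which the increments in $x$ are controlled by the local-CLT bounds of Lemma~\ref{bounds on psi} and Burkholder--Davis--Gundy for the martingale part. Without this (or an equivalent) ingredient your proof is incomplete. A secondary issue: invoking ``H\"older regularity of weak solutions to the limiting SPDE'' for the $w^\infty$-side mollification error is circular at this stage, since Proposition~\ref{continuity proposition} is needed to identify any limit point as a solution of that SPDE; the clean route is to pass the uniform estimate on $\overline{w}_n$ to the limit (as the paper does by bounding the $w^\infty$ term ``in the same way'').
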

The proof, which we give in Appendix~\ref{continuity estimates}, once
again follows \cite{etheridge/veber/yu:2014}. It rests on 
the continuity 
estimate~\eqref{ctty eq} on the densities $\overline{w}_n$ 
that we used above, 
and integration with respect
to a suitable approximate identity.  

\subsection{Uniqueness of solutions for the limiting equation}
\label{Subsection Uniqueness of solutions for limiting equation}
In order to establish uniqueness of the limit points in $d\geq 2$, 
and hence
convergence of our rescaled SLFVFS, we consider the corresponding 
stochastic partial differential equation. The first task is to show
that any solution to the martingale problem~(\ref{Martingale Problem for original model nd})
is a weak solution to the stochastic p.d.e.~(\ref{Limiting equation nd}).
We establish this in Appendix~\ref{mgpspde}, using a technique of
 \cite{kurtz:2010}. 
This then reduces the question to 
that of the weak uniqueness of
the solution to the corresponding stochastic p.d.e.,~which we deduce 
from a pathwise uniqueness result of 
\cite{rippl/sturm:2013} 
and a suitable version of the Yamada-Watanabe Theorem.

In one dimension, although any solution to the martingale problem 
does indeed give a solution to the stochastic p.d.e., we do not
have an analogue of the result of  \cite{rippl/sturm:2013}. The 
only case in which we have a proof of uniqueness of the equation is
when $W$ is also space-time white noise, in which case we can 
use the duality of Section~\ref{duality}. We note, however, that the
proof of our main result would need to be modified to capture this
form of environmental noise, see Remark~\ref{breakdown for white noise}. 
We do not currently know how to do this.

\begin{theorem}[\cite{rippl/sturm:2013}, Theorem 1.2] 
\label{rippl sturm theorem}
Consider the stochastic heat equation
\begin{equation}\label{Heat equation}
du_{t} = \frac12 \Delta u_t \mathrm{d}t + 
\sigma(t,x,u_t)W(\mathrm{d}t,\mathrm{d}x) + b(t,x,u_t)\mathrm{d}t,
\end{equation} 
where $b$, $\sigma$ are continuous real-valued functions 
and the noise $W$ is white in 
time and coloured in space with quadratic variation given by
\begin{equation*}
\langle W(\phi)\rangle_{t} = t\int_{\mathbb{R}^{d}}
\int_{\mathbb{R}^{d}}g(x,y)\phi(x)\phi(y)\mathrm{d}x\mathrm{d}y,
\end{equation*} 
where the kernel $g(x,y)$ is bounded by a Riesz potential; that is
there exists a constant $c_1>0$ such that
\begin{equation} \label{RS condition on correlation kernel}
g(x,y) \leq c_{1}(1 + |x-y|^{\alpha})\quad\mbox{for some }\alpha \in 
(0,2\wedge d).
\end{equation}
Suppose further that
\begin{enumerate}
\item 
there exists a constant $c_{2}>0$ such that 
\begin{equation*}
|\sigma(t,x,u)| + |b(t,x,u)| \leq c_{2}(1 + |u|);
\end{equation*}
\item $\sigma(t,x,u)$ is $\gamma$-H\"older continuous w.r.t. $u$, i.e.
\begin{equation*}
|\sigma(t,x,u_{1}) - \sigma(t,x,u_{2})| \leq c_{3}(t)e^{A_{1}|x|}(1 + |u_{1}| + |u_{2}|)|u_{1}-u_{2}|^{\gamma},
\end{equation*}
where $\gamma \in (0,1)$, and $c_{3}, A_{1} > 0$;
\item 
there exists a constant $c_{4}$ such that 
\begin{equation*}
|b(t,x,u_{1}) - b(t,x, u_{2})| < c_{4}|u_{1} - u_{2}|.
\end{equation*} 
\end{enumerate}
Under the above assumptions, equation \eqref{Heat equation} with initial 
data in $C_{tem}$ has a mild solution. This solution is continuous in time
and is pathwise unique whenever
$\alpha < 2(2\gamma -1)$.
\end{theorem}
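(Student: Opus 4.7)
The plan is to establish existence and uniqueness separately, with uniqueness being by far the harder half. For \emph{existence}, I would follow the standard Walsh-style mild-solution approach: iterate the Picard scheme
\begin{equation*}
u^{(n+1)}_t(x) = (P_t u_0)(x) + \int_0^t\!\!\int P_{t-s}(x-y)\,\sigma(s,y,u^{(n)}_s(y))\,W(\mathrm{d}s,\mathrm{d}y) + \int_0^t\!\!\int P_{t-s}(x-y)\,b(s,y,u^{(n)}_s(y))\,\mathrm{d}y\,\mathrm{d}s,
\end{equation*}
where $P_t$ is the heat semigroup. Under the linear growth hypothesis on $\sigma$ and $b$, truncating and using the classical Burkholder-Davis-Gundy inequality together with the fact that $\int\int P_{t-s}(x-y)P_{t-s}(x-y')g(y,y')\mathrm{d}y\mathrm{d}y'$ is integrable against $\mathrm{d}s$ under the Riesz bound~\eqref{RS condition on correlation kernel} (since $\alpha<2\wedge d$), one obtains uniform moment bounds in a weighted space $C_{\mathrm{tem}}$. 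Time continuity then follows from a Kolmogorov-Garsia-Rodemich-Rumsey argument applied factor-by-factor to the deterministic and stochastic integrals.

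For \emph{pathwise uniqueness}, I would adapt the Mytnik-Perkins approach to H\"older SPDE coefficients to the coloured-noise setting. Take two mild solutions $u^{(1)},u^{(2)}$ with the same initial data, set $D_t(x)=u^{(1)}_t(x)-u^{(2)}_t(x)$, and introduce the Yamada-Watanabe approximants $\phi_n\in C^2(\mathbb{R})$ with $\phi_n(x)\uparrow |x|$, $0\le \phi_n'\le 1$, and $\phi_n''(x)$ supported in $[a_n,a_{n-1}]$ with $\int \phi_n''(x)\,\mathrm{d}x\to 0$ geometrically. Pair with a nonnegative test function $\psi\in C_c^\infty$ and with a second mollification $\psi_m(x,\cdot)$ concentrating at $x$, so that applying It\^o's formula to $\phi_n(\langle D_t,\psi_m(x,\cdot)\rangle)$ and then integrating against $\psi(x)\,\mathrm{d}x$ produces a drift contribution controlled by the Lipschitz constant of $b$ via Gronwall, and a martingale part whose quadratic variation is
\begin{equation*}
\int_0^t\!\!\int\!\!\int \phi_n''(\langle D_s,\psi_m(x,\cdot)\rangle)^2 \,[\sigma(u^{(1)}_s(y))-\sigma(u^{(2)}_s(y))][\sigma(u^{(1)}_s(y'))-\sigma(u^{(2)}_s(y'))]\,g(y,y')\cdots\,\mathrm{d}y\mathrm{d}y'\mathrm{d}s.
\end{equation*}
Using the H\"older hypothesis on $\sigma$, this is bounded by a constant times $|D_s(y)|^\gamma|D_s(y')|^\gamma g(y,y')$. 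Sending $m\to\infty$ and then $n\to\infty$ and invoking the Riesz bound on $g$ leaves one needing to bound $\mathbb{E}|D_s(y)-D_s(y')|^{2\gamma}$ by an explicit power of $|y-y'|$ via a preliminary spatial-H\"older estimate for $D_s$ (which in turn comes from the factorisation/smoothing effect of the heat kernel).

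The \emph{main obstacle} is precisely this interplay between the colour exponent $\alpha$ and the H\"older exponent $\gamma$. The heat kernel gives a spatial H\"older exponent of $\tfrac{2-\alpha}{2}-\varepsilon$ for mild solutions driven by noise of Riesz-colour $\alpha$. Matching this against the $2\gamma$-th moment estimate that must dominate the singular Riesz integral forces $\alpha<2(2\gamma-1)$; below this threshold the blow-up of $\phi_n''$ is beaten by the vanishing of $|D_s|^{2\gamma}$ near the diagonal, the martingale term disappears in the limit, and Gronwall closes the estimate to give $D\equiv 0$. Verifying the exact exponent is the delicate bookkeeping step, and is where one must carefully track the dependence of the H\"older estimate on the truncation parameters $a_n$ and the mollification scale $m$; at the boundary $\alpha=2(2\gamma-1)$ the argument degenerates, so one should expect the proof to require strict inequality.
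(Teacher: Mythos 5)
This statement is not proved in the paper at all: it is quoted verbatim (as Theorem~1.2) from Rippl and Sturm (2013) and used as a black box to deduce pathwise uniqueness of the limiting stochastic p.d.e.\ in $d\ge 2$. So there is no in-paper argument to compare yours against; the relevant comparison is with the cited reference, and your sketch does follow the route Rippl and Sturm (building on Mytnik, Perkins and Sturm) actually take: Picard iteration under the Dalang-type integrability condition guaranteed by the Riesz bound for existence, and a Yamada--Watanabe/Mytnik--Perkins scheme with the approximants $\phi_n$ and a double mollification in space for pathwise uniqueness.

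As a standalone proof, however, your proposal has a genuine gap exactly where you flag the ``delicate bookkeeping''. The naive spatial H\"older exponent $\tfrac{2-\alpha}{2}-\varepsilon$ coming from the heat kernel and the colour of the noise is \emph{not} by itself sufficient to beat the blow-up of $\phi_n''$ against the Riesz singularity over the full range $\alpha<2(2\gamma-1)$; the technical heart of the Mytnik--Perkins--Sturm argument is a bootstrapped, inductive improvement of the regularity of the difference $D=u^{(1)}-u^{(2)}$ \emph{near points where $D$ is small} (where the H\"older bound on $\sigma$ forces the local martingale part of $D$ to be nearly degenerate), and this induction on the H\"older exponent occupies most of the proof. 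Without it the Gronwall argument closes only for a strictly smaller range of $\alpha$. Two smaller slips: the term you must kill is the It\^o correction, which carries $\phi_n''$ to the first power (the quadratic variation of the martingale part carries $(\phi_n')^2$ and is harmless after taking expectations), not $(\phi_n'')^2$; and the Riesz bound as printed in the paper should read $g(x,y)\le c_1(1+|x-y|^{-\alpha})$, i.e.\ singular on the diagonal, which is how you (correctly) interpreted and used it.
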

\begin{corollary}
Solutions to the stochastic p.d.e.~(\ref{Limiting equation nd})
are pathwise unique.  
\end{corollary}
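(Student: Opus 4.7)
The plan is to apply Theorem~\ref{rippl sturm theorem} of \cite{rippl/sturm:2013} directly, once a harmless rescaling and a Lipschitz extension have brought our equation into their framework. First, rescale time by setting $\widetilde{w}(t,x) := w^{\infty}(\bar{u}\Gamma_R\, t, x)$, which converts the heat operator $\tfrac{\bar{u}\Gamma_R}{2}\Delta$ into $\tfrac12\Delta$ and simply multiplies the drift and the squared noise coefficient by $1/(\bar{u}\Gamma_R)$; the form of the equation is thus exactly~\eqref{Heat equation} with
\begin{equation*}
\sigma(u) = c_\sigma\, u(1-u), \qquad b(u) = c_b\, u(1-u)(1-2u),
\end{equation*}
for explicit constants $c_\sigma, c_b$. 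Both coefficients vanish at $u=0$ and $u=1$ and are smooth on $[0,1]$, so we may extend them to all of $\IR$ by setting them equal to zero outside $[0,1]$ while remaining globally Lipschitz (and in particular of linear growth).

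Next I would verify the hypotheses of Theorem~\ref{rippl sturm theorem}. Conditions (i) and (iii) hold for the extended coefficients since they are bounded and Lipschitz. Condition (ii) holds with \emph{any} $\gamma\in(0,1)$, because the extended $\sigma$ is globally Lipschitz on $\IR$, hence trivially $\gamma$-H\"older with the required weight $e^{A_1|x|}$ (in fact with $A_1=0$). For the correlation kernel: by assumption $g\in C_b(\IR^d\times\IR^d)$, so $\|g\|_\infty<\infty$ and
\begin{equation*}
g(x,y)\ \leq\ \|g\|_\infty\ \leq\ \|g\|_\infty\bigl(1+|x-y|^{\alpha}\bigr)
\end{equation*}
for every $\alpha\in(0,2\wedge d)$, giving~\eqref{RS condition on correlation kernel}. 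The constraint $\alpha<2(2\gamma-1)$ can then be met by choosing $\gamma$ close to $1$ and $\alpha$ close to $0$. Theorem~\ref{rippl sturm theorem} therefore applies and yields pathwise uniqueness for the extended equation.

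Finally, to transfer uniqueness back to~\eqref{Limiting equation nd}, I would observe that any solution $w^\infty$ of~\eqref{Limiting equation nd} with initial data in $[0,1]$ takes values in $[0,1]$ almost surely; this follows from the vanishing of $\sigma$ and $b$ at the endpoints combined with a comparison with the constant solutions $0$ and $1$ (alternatively, it is inherited from the fact that our limit point arises from densities of measures in $\mathcal{M}_\lambda$). On $[0,1]$ the extended coefficients coincide with the original ones, so two solutions of~\eqref{Limiting equation nd} driven by the same noise are also solutions of the extended equation and hence agree. The only step requiring a little care is the verification that our candidate solution lies in the state space to which the Rippl--Sturm theorem applies (for instance, that the initial data is in $C_{tem}$); this follows from the boundedness of $w^\infty(0,\cdot)$ together with the continuity inherited from Proposition~\ref{continuity proposition}. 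Weak uniqueness for~\eqref{Limiting equation nd}, and thus uniqueness of the limit point of $\{\overline{M}^n\}_{n\geq 1}$ in dimensions $d\geq 2$, then follows via the Yamada--Watanabe theorem.
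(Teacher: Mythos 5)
Your proof is correct and follows essentially the same route as the paper: a direct application of Theorem~\ref{rippl sturm theorem}, using that the coefficients are Lipschitz on the range $[0,1]$ of the solution and that the bounded kernel $g$ trivially satisfies~\eqref{RS condition on correlation kernel}, with the constraint $\alpha<2(2\gamma-1)$ absorbed by taking $\gamma$ near $1$. The only quibble is the direction of your time change: to turn $\tfrac{\bar{u}\Gamma_R}{2}\Delta$ into $\tfrac12\Delta$ you should set $\widetilde{w}(t,x)=w^{\infty}\bigl(t/(\bar{u}\Gamma_R),x\bigr)$, not $w^{\infty}(\bar{u}\Gamma_R\,t,x)$.
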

\begin{proof}
It is easy to see that all the conditions of
Theorem~\ref{rippl sturm theorem} are satisfied.
The functions $b$, $\sigma$, are polynomials in $w^\infty$ and, 
since the solution itself take values in 
$[0,1]$, they are both Lipschitz. 
The absolute value of the correlation kernel is bounded,
and so condition~\eqref{RS condition on correlation kernel} 
is satisfied for any $\alpha$. 
\end{proof}
\begin{remark}
Theorem~\ref{rippl sturm theorem} guarantees the uniqueness result for the 
mild solution of \eqref{Heat equation}. However, 
under the assumptions of this Theorem, the mild and analytically weak 
formulations are actually equivalent; see the discussion on p.~1917 of 
\cite{mytnik/perkins/sturm:2006}.
\end{remark}
Uniqueness of the weak solution to \eqref{Limiting equation nd} is now a 
consequence of a Yamada-Watanabe Theorem. A suitable version 
is provided by  \cite{kurtz:2007} Theorem~3.14 and 
Lemma~2.4 (see also  \cite{rippl/sturm:2013}, 
Theorem~1.3 and \cite{kurtz:2014} Example~3.9.).

\begin{appendix}

\section{Proof of Proposition~\ref{continuity proposition}}
\label{continuity estimates}

In this section we shall prove Proposition~\ref{continuity proposition} 
in the case $k=2$. The case $k=3$ is entirely analogous.

Let $\rho_\epsilon$ be a continuous density in $\IR^d$
supported on $B_{\epsilon}(0)$. Then
\begin{eqnarray*}
\Bigg|\langle \overline{w}_n^2(s),
f\rangle &-&\langle (w^\infty)^2(s),f\rangle\Bigg| \\
&&\leq 
\left|\int_{\IR^d}f(x)\overline{w}^2_n(s,x)\mathrm{d}x
-\int_{\IR^d}\int_{\IR^d}f(x)\overline{w}_n(s,x)\overline{w}_n(s,y)
\rho_\epsilon(x-y)\mathrm{d}y\mathrm{d}x\right|\\
&&+
\left|\int_{\IR^d}\int_{\IR^d}f(x){w}^\infty(s,x){w}^\infty(s,y)
\rho_\epsilon(x-y)\mathrm{d}y\mathrm{d}x-
\int_{\IR^d}f(x)({w}^\infty(s,x))^2\mathrm{d}x
\right|\\
&& +\Bigg|
\int_{\IR^d}\int_{\IR^d}f(x)\overline{w}_n(s,x)\overline{w}_n(s,y)
\rho_\epsilon(x-y)\mathrm{d}y\mathrm{d}x\\
&&\phantom{AAAAA} 
-\int_{\IR^d}\int_{\IR^d}f(x){w}^\infty(s,x){w}^\infty(s,y)
\rho_\epsilon(x-y)\mathrm{d}y\mathrm{d}x\Bigg|.
\end{eqnarray*}
From convergence of the process in ${\cal M}_\lambda$, 
we can deduce that the third term on the right
converges to zero. We turn our attention to the first term, noticing that the second term can be estimated in the same way. Taking expectations and using
Fubini's Theorem,
\begin{eqnarray}\label{Appendix Fubini}
&&\IE\left[\left|\int_{\IR^d}f(x)\overline{w}^2_n(s,x)\mathrm{d}x
-\int_{\IR^d}\int_{\IR^d}f(x)\overline{w}_n(s,x)\overline{w}_n(s,y)
\rho_\epsilon(x-y)\mathrm{d}y\mathrm{d}x\right|\right]
\nonumber \\
&\leq&
\int_{\IR^d}\IE\left[\left|f(x)\overline{w}_n(s,x)\int_{\IR^d}
\big(\overline{w}_n(s,x)-\overline{w}_n(s,y)\big)\rho_\epsilon(x-y)\mathrm{d}y\right|\right]\mathrm{d}x
\nonumber \\
&\leq&
\|f\|_{\infty}\int_{S_{f}}\int_{B_{0}(\epsilon)}\IE[|\overline{w}_n(s,x)
-\overline{w}_n(s,y)|]\rho_\epsilon(x-y)\mathrm{d}y\mathrm{d}x,
\end{eqnarray}
where $S_{f}$ denotes support of function $f$. 

We now bound the expectation on the right hand side, by proving 
a continuity estimate for $\overline{w}_n$. In the limit as 
$n\rightarrow\infty$, this results from the smoothing effect of the heat 
semigroup.
We use a well-known
trick of  \cite{mueller/tribe:1995} based on substituting 
a careful choice of test functions,
which approximate the Brownian transition density,
into the martingale characterization of the process.  
We shall focus on $d=1$, but the proof in higher dimensions follows the 
same pattern.

First observe that setting $F(x)\equiv x$ 
in~(\ref{neutral bit}), 
\begin{multline}
{\cal L}^{neu}_n\psi_{\mathtt{Id},f}(w_n)
= u_n n^{4/3}\int_{\IR}\overline{w}_n(y)\int_{B_n(y)}\phi_f(x)(1-w_n(x))\mathrm{d}x\mathrm{d}y
\\-u_n n^{4/3}\int_{\IR}(1-\overline{w}_n(y))\int_{B_n(y)}\phi_f(x)w_n(x)\mathrm{d}x\mathrm{d}y
\\=n^{4/3} u_n\int\int \phi_f(x)\ind_{\{|x-y|<R_n\}}\Big(\overline{w}_n(y)-w_n(x)\Big)
\mathrm{d}x\mathrm{d}y.
\end{multline}
Writing $\overline{w}_n$ as an integral and using the notation
$$\Lambda(x)=\max (1-|x|,0), \qquad \Lambda_R(x)=\frac{1}{R}\Lambda\left(
\frac{x}{R}\right),$$
noting that, in $d=1$, $\mathtt{vol}(B_R(x)\cap B_R(y))=(2R)^2\Lambda_{2R}(x-y)$,
we obtain 
\begin{equation*}
{\cal L}^{neu}_n\psi_{\mathtt{Id},f}(w_n)
=
V_Rn^{2/3}\bar{u}\int_{\IR}\phi_f(x)\left(
\int \Lambda_{2R/n^{1/3}}(x-y)\Big(w_n(y)-w_n(x)\Big)\mathrm{d}y\right)\mathrm{d}x.
\end{equation*} 
We now choose a special class of time dependent test functions
$\phi_t^n$, in such a way that 
$${\cal L}^{neu}_n(\langle w_n(s), \phi_{t-s}^n\rangle)=0.$$
In order to make contact with the notation of \cite{mueller/tribe:1995},
we set $\phi_t^n=\psi_t^{n^{2/3}}$ where 
$\psi_t^n$ satisfies
$$\frac{\partial}{\partial t}\psi_t^n(x,z)=
n\bar{u}\left(\int_{\IR}\left(1-\frac{\sqrt{n}|x-y|}{2}
\right)_+
\Big(\psi_t^n(y,z)-\psi_t^n(x,z)\Big)\mathrm{d}y\right),$$
with initial condition $\psi_0^n(x;y) = \delta_{y}(x)$.
Note that this gives an approximation to the Brownian transition 
density as $n\rightarrow\infty$.
Writing $\Lambda_{2/\sqrt{n}}^{*k}$ for the $k$-fold convolution of
$\Lambda_{2/\sqrt{n}}$,
$$\psi_t^n(x,z)=\sum_{k=0}^\infty e^{-nt}\frac{(nt)^k}{k!}
\Lambda_{2/\sqrt{n}}^{*k}(x-z).$$

We shall require the following lemma, which is essentially 
Lemma~3 of \cite{mueller/tribe:1995}, and can be proved in the same way, using
bounds from the local central limit theorem.  
\begin{lemma}
\label{bounds on psi}
Let $\|f\|_\lambda=\sup_x|f(x)|e^{\lambda |x|}$. Then, for all $x$, $y$, $z$,
\begin{enumerate}
\item  \label{MT 1}
$|\psi_t^n(x,z)-e^{-nt}\delta_z(x)|\leq C_T t^{-1/2}$ for $t\leq T$.
\item \label{MT 2}
$|\psi_t^n(x,y)-\psi_t^n(x,z)-e^{-nt}\delta_y(x)+e^{-nt}\delta_z(x)|\leq 
C(t^{-1}|y-z|+n^{-1}t^{-3/2})$.
\item\label{MT 3}
$\psi_t^n(x,z)\leq C_{\lambda,T}e^{-\lambda |x-z|}$ for 
$\lambda>0$, $t\leq T$ and $|x-z|\geq 1$.
\item \label{MT 4}
$\|\psi_t^n(x,z)-e^{-nt}\delta_z(x)\|_\lambda\leq C_{\lambda, T}
t^{-1/2}e^{\lambda |z|}$ for $\lambda>0$, $t\leq T$.
\item\label{MT 5}
For $\lambda >0$, $t\leq T$,
\begin{multline*}
\|\psi_t^n(x,y)-\psi_t^n(x,z)-e^{-nt}\delta_y(x)+e^{-nt}\delta_z(x)\|_\lambda
\\
\leq 
C_{\lambda,T}(t^{-1/2}|y-z|^{1/2}+n^{-1/2}t^{-3/4})e^{\lambda |z|}
\end{multline*}
\end{enumerate}
\end{lemma}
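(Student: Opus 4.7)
The plan is to follow the same strategy as in \cite{mueller/tribe:1995}, exploiting the fact that $\psi_t^n(\cdot,z)$ is the transition density of the continuous-time compound Poisson process $z + S_{N_t}$, where $N_t$ is Poisson with mean $n\bar{u}t$ (the $\bar{u}$ gets absorbed into constants) and the $S_k = X_1 + \cdots + X_k$ are sums of i.i.d.\ random variables with density $\Lambda_{2/\sqrt{n}}$. Since $\Lambda_{2/\sqrt{n}}$ has mean zero, variance of order $1/n$, and compact support of radius $2/\sqrt{n}$, the process $S_{N_t}$ is, on timescales of order $1$, a good approximation to Brownian motion with diffusion constant of order $1$; this is the heuristic behind every estimate in the lemma.

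To prove \eqref{MT 1}, I would isolate the $k=0$ term (which is exactly $e^{-nt}\delta_z(x)$) and bound each remaining convolution power $\Lambda_{2/\sqrt{n}}^{*k}$ by the local central limit theorem, which gives $\|\Lambda_{2/\sqrt{n}}^{*k}\|_\infty \leq C\sqrt{n/k}$ for $k\geq 1$. Summing against Poisson weights and using $\mathbb{E}[N_t^{-1/2}\mathbf{1}_{N_t \geq 1}] \leq C(nt)^{-1/2}$ yields the $Ct^{-1/2}$ bound. Estimate \eqref{MT 3} is a Chernoff-type tail bound: the moment generating function of $X_i$ exists uniformly in $n$ (since the jumps are bounded by $2/\sqrt{n}$), and so $\mathbb{P}(|S_{N_t}| \geq r) \leq \exp(-\lambda r + ct)$ for appropriate $\lambda$ and $c$; combining this with the $L^\infty$ bound gives exponential decay at the density level for $|x-z|\geq 1$.

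For \eqref{MT 2} I would differentiate $\psi_t^n$ in the spatial variable, using that $\partial_x \Lambda_{2/\sqrt{n}}^{*k}$ can be written as a difference of convolutions and bounded by $C(n/k)$, then sum against Poisson weights to get a Lipschitz bound of order $t^{-1}|y-z|$, together with a residual term of order $n^{-1}t^{-3/2}$ coming from the $k=1$ correction where the convolution smoothing has not yet kicked in. The weighted-norm versions \eqref{MT 4} and \eqref{MT 5} follow from the unweighted ones by transferring the weight using the triangle inequality $|x| \leq |x-z| + |z|$ and absorbing the factor $e^{\lambda|x-z|}$ into the Gaussian/exponential tail supplied by \eqref{MT 3}; this is cheap because $\lambda$ can be chosen smaller than the decay rate in \eqref{MT 3}.

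The main obstacle is \eqref{MT 5}, which needs the $1/2$-H\"older exponent and the specific rate $n^{-1/2}t^{-3/4}$. Direct interpolation between \eqref{MT 1} and \eqref{MT 2} via $\min(a,b)\leq \sqrt{ab}$ gives $t^{-3/4}|y-z|^{1/2}$ cleanly but leaves a residual of the form $n^{-1/2}t^{-1}$, which is slightly worse than claimed. To recover the sharper $n^{-1/2}t^{-3/4}$, one has to go back into the Poisson sum and interpolate term-by-term: split the sum into the range $k \lesssim 1$ and $k\gtrsim nt$, apply the heat-kernel H\"older estimate $|p_t(y)-p_t(z)|\leq Ct^{-3/4}|y-z|^{1/2}$ to the bulk, and bound the low-$k$ remainder (which carries the $n$-dependence) separately using the crude $L^\infty$ and Lipschitz estimates. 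This tracks exactly the argument of Mueller and Tribe, and the bookkeeping is the only delicate step.
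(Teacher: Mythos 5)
Your proposal is correct and is essentially the paper's own argument: the paper gives no proof of this lemma beyond the remark that it ``is essentially Lemma~3 of Mueller and Tribe (1995) and can be proved in the same way, using bounds from the local central limit theorem,'' and your reconstruction --- isolating the $k=0$ atom $e^{-nt}\delta_z$, bounding $\Lambda_{2/\sqrt{n}}^{*k}$ and its increments via the local CLT, Chernoff bounds for the exponential tails, and term-by-term interpolation in the Poisson sum for part~5 --- is precisely that argument. Your honest flagging of the bookkeeping needed to get the stated rates in part~5 (rather than what crude interpolation of parts~1 and~2 yields) is the one genuinely delicate point, and it is handled exactly as you describe in the cited reference.
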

Now using~(\ref{sum of two terms}) with $F(x)=x$ 
and~(\ref{eq 8.23.5})
we find that
\begin{align*}
w_{n}(t,x) = M_{t}(\phi^n_{\cdot}(\cdot,x)) + 
\int_{0}^{t}\int_{\IR}\phi_{t-s}^{n}(z,x)v_{n}(s,z) \mathrm{d}z\mathrm{d}s
+{\mathcal O}(n^{-1/3}),
\end{align*}
where $M_t(\phi^n_{\cdot}(\cdot,x))$ is a martingale and $v_n(s,z)$ is
uniformly bounded over compact time intervals.
We can then write
\begin{multline*}
|w_{n}(t,x) - w_{n}(t,y)|  
\\ 
=
\left| M_{t}(\phi^n_{\cdot}(\cdot,x)) - 
M_{t}(\phi^n_{\cdot}(\cdot,y)) + \int_{0}^{t}\int_{\IR}\Big(\phi^n_{t-s}(z,x) - 
\phi^n_{t-s}(z,y)\Big)v_{n}(s,z)\mathrm{d}z\mathrm{d}s\right|.
\end{multline*}
We first control the integral on the right. 
Using the bounds in Lemma~\ref{bounds on psi} we write 
\begin{align*}
\left|\int_{0}^{t}\int_{\IR}\Big(\phi_{t-s}^n(z,x) - \phi_{t-s}^n(z,y)\Big)
v_{n}(s,z)
\mathrm{d}z\mathrm{d}s \right|  \leq I_{1} + I_{2} + I_{3}
\end{align*}
where
\begin{align*}
I_{1} &= \left|\int_{0}^{t- \tau}\int_{\IR}\Big(\phi^n_{t-s}(z,x)  -\phi^n_{t-s}(z,y)\Big)v_{n}(s,z)\mathrm{d}z\mathrm{d}s  \right.
\\
& \phantom{AAAAAAAAAAAAAAAAAAAaaaaa}\left.
-\int_{0}^{t- \tau}e^{-n(t-s)}\left(v_{n}(s,x) - v_{n}(s,y)\right) 
\mathrm{d}s\right|,
\\
I_{2} &=\left|\int_{t- \tau}^{t}\int_{\IR}\Big(\phi^n_{t-s}(z,x) - 
\phi_{t-s}(z,y)\Big)v_{n}(s,z)\mathrm{d}z\mathrm{d}s \right.  
\\
& \phantom{AAAAAAAAAAAAAAAAAAAaaaaa}\left. - 
\int_{t- \tau}^{t}e^{-n(t-s)}\left(v_{n}(s,x) - v_{n}(s,y)\right)
\mathrm{d}s \right|,\\
I_{3} &= \left|\int_{0}^{t}e^{-n(t-s)}\left(v_{n}(s,x) - 
v_{n}(s,y)\right) \mathrm{d}s\right|.
\end{align*}
We estimate these three terms separately. First observe that
\begin{align*}
I_{1} &= \left| \int_{0}^{t- \tau}\int_{\IR}\left(\phi^n_{t-s}(z,x) - 
\phi^n_{t-s}(z,y) + e^{-nt}\delta_{x}(z) -  e^{-nt}\delta_{y}(z)
\right)v_{n}(s,z)\mathrm{d}z\mathrm{d}s \right| \\
&\leq \sup_{s \in [0,t - \tau]}\| \phi^n_{t-s}(z,x) - \phi^n_{t-s}(z,y) + 
e^{-nt}\delta_{x}(z) -  e^{-nt}\delta_{y}(z) \|_{\lambda}
\\
&\phantom{\leq \sup_{s \in [0,t - \tau]}\| \phi^n_{t-s}(z,x) - \phi^n_{t-s}(z,y) + 
e^{-nt}\delta_{x}}\times\left| 
\int_{0}^{t - \tau}\int_{\IR}e^{-\lambda z}v_{n}(s,z)\mathrm{d}z\mathrm{d}s 
\right| \\
& \leq C_{\lambda,T}\left(\tau^{-\frac{1}{2}}|x - y|^{\frac{1}{2}} + 
n^{-\frac{1}{3}}\tau^{-\frac{3}{4}} \right)e^{\lambda |y|},
\end{align*}
where we have used Part~\ref{MT 5} of Lemma~\ref{bounds on psi}.  
Analogously, 
\begin{align*}
I_{2} &= \left| \int_{t-\tau}^{t}\int_{\IR}\left(\phi^n_{t-s}(z,x) - 
\phi^n_{t-s}(z,y) + e^{-nt}\delta_{x}(z) -  e^{-nt}\delta_{y}(z)
\right)v_{n}(s,z)\mathrm{d}z\mathrm{d}s \right| \\
& \leq C_{\lambda} \int_{t-\tau}^{t}\|\phi^n_{t-s}(z,x) - \phi^n_{t-s}(z,y) 
+ e^{-nt}\delta_{x}(z) -  e^{-nt}\delta_{y}(z)\|_{\lambda}
\mathrm{d}s \\
& \leq C_{\lambda, t}\tau^{\frac12} e^{\lambda |y|},
\end{align*}
where we have used Part~\ref{MT 4} of Lemma~\ref{bounds on psi}. 
Finally, $I(3)$ can be bounded by 
\begin{align*}
I_{3} \leq C \int_{0}^{t}e^{-n^{2/3}s}\mathrm{d}s \leq \frac{C}{n^{2/3}}
\end{align*}
Combining the estimates above we conclude that 
\begin{multline}\label{A1 bound on integral term}
|w_{n}(t,x) - w_{n}(t, y)|
\\
\leq
\left| M_{t}(\phi^n_{\cdot}(\cdot,x)) - 
M_{t}(\phi^n_{\cdot}(\cdot,y))\right| 
+\left|\int_{0}^{t}\int_{\IR}\lbrace
\phi_{t-s}^n(z,x) - 
\phi_{t-s}^n(z,y)
\rbrace
v_{n}(s,z)\mathrm{d}z\mathrm{d}s \right|
\\
\leq 
\left| M_{t}(\phi^n_{\cdot}(\cdot,x)) - 
M_{t}(\phi^n_{\cdot}(\cdot,y))\right| 
+ 
C\left(|x-y|^{1/4}+n^{-1/12}\right)e^{\lambda|x|},
\end{multline}
where the second term is independent of $s\leq t$.
We turn our attention to the difference of martingale terms. 
We use the Burkholder-Davis-Gundy inequality in the form:
\begin{align*}
\IE\left[ \sup_{s \leq t} M_{s}\right] \leq C\left( 
\IE \langle M \rangle_{t}^{\frac{1}{2}} + 
\sup_{s\leq t} |M_{s} - M_{s_{-}}| \right).
\end{align*}
The first term can be controlled in the same way as above, by writing it
as an integral of $\phi_{\cdot}^n(\cdot ,x)-\phi_{\cdot}^n(\cdot ,y)$ times
a bounded function and the jumps of the martingale are of size 
${\mathcal O}(n^{-1/3})$, so combining with~(\ref{A1 bound on integral term})
leads to, for any $\lambda > 0$,
\begin{equation}
\label{ctty bound eq}
\IE[\sup_{s\leq t}|\overline{w}_n(s,x)-\overline{w}_n(s,y)|] \leq C_{\lambda,t}
\left(\left(n^{-\frac{1}{3}} + |x - y|^{\frac{1}{4}} + n^{-\frac{1}{12}}\right)e^{\lambda |x|}
\right).
\end{equation}

Taking into account that $\rho_{\epsilon}$ is a probability density supported in $B_{0}(\epsilon)$ and that the support of the test function $f$ is compact support, we conclude that \eqref{Appendix Fubini} can be bounded by
\begin{multline*}
\IE\left[\left|\int_{\IR^d}f(x)\overline{w}^2_n(s,x)\mathrm{d}x
-\int_{\IR^d}\int_{\IR^d}f(x)\overline{w}_n(s,x)\overline{w}_n(s,y)
\rho_\epsilon(x-y)\mathrm{d}y\mathrm{d}x\right|\right] \\
\leq C_{\lambda,t}
\left(n^{-\frac{1}{3}} + \epsilon^{\frac{1}{4}} + n^{-\frac{1}{12}}\right)
\end{multline*} 
Noting once again that the remaining term can be bounded in the same way, we let $n \to \infty$ to obtain  
\begin{align*}
\limsup_{n\rightarrow\infty}\IE\left[\left|\langle \overline{w}_n^k(s),
f\rangle-\langle (w^\infty)^k(s),f\rangle\right|\right] \leq C\epsilon^{\frac{1}{4}},
\end{align*}
which, since $\epsilon$ is arbitrary,
completes the proof of Proposition~\ref{continuity proposition} .

\section{Equivalence of martingale problem and stochastic p.d.e.~formulations} 
\label{mgpspde}

Suppose that $d\geq 2$.
Write $w$ for a continuous representative of the density of the ${\cal M}_\lambda$-valued
process, which exists as a result of the
calculations of Appendix~\ref{continuity estimates}. Our aim is to show
that $w$ is a weak solution to the stochastic 
p.d.e.~\eqref{Limiting equation nd}.
This follows as a special case from the following result.
\begin{theorem}\label{APB Equivalence theorem}
Let $b, \sigma, g$ be functions satisfying the conditions of 
Theorem~\ref{rippl sturm theorem}.
Suppose that for all $F \in C^{\infty}(\mathbb{R})$ and $f\in C_c^\infty(\IR^d)$,  
\begin{equation}
\label{MGP1}
F(\langle w_t,f\rangle) - F(\langle w_0,f\rangle) 
-\int_0^t
F^{\prime}(\langle w_s,f\rangle)
\left\{\frac{1}{2}\langle w,\Delta f\rangle + \langle b(s,w),f\rangle
\right\}\mathrm{d}s
\end{equation}
is a martingale with quadratic variation
\begin{equation}
\nonumber
\int_0^t\int_{\IR^d\times\IR^d} F^{\prime\prime}(\langle w_s,f\rangle) g(x,y)\sigma(s,x,w))\sigma(s,y,w))f(x)f(y)\mathrm{d}x\mathrm{d}y\mathrm{d}s.\end{equation}
Then $w$ is a (stochastically and analytically) weak solution to the 
stochastic p.d.e.
\begin{equation}
\label{SPDE2}
dw=\left\{\frac{1}{2}\Delta w+ b(t,x,w)\right\}dt+ \sigma(t,x,w)W(\mathrm{d}t,\mathrm{d}x),
\end{equation}
where the noise $W$ is white in time and coloured in space with 
quadratic variation
$$\langle W(\phi)\rangle_t=t\int_{\IR^d\times \IR^d}\phi(x)\phi(y)g(x,y)\mathrm{d}x\mathrm{d}y.$$
\end{theorem}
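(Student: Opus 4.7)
The plan is to extract from the martingale problem a martingale measure in the sense of Walsh, and then represent it as a stochastic integral against a white-in-time, colored-in-space noise $W$.

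\emph{Step 1 (extracting the martingale and its quadratic variation).} First, I would apply the martingale problem hypothesis with $F(x)=x$ to deduce that, for each $f\in C_c^\infty(\IR^d)$,
\begin{equation*}
M_t^f \;:=\; \langle w_t,f\rangle-\langle w_0,f\rangle-\int_0^t\!\Bigl\{\tfrac12\langle w_s,\Delta f\rangle+\langle b(s,w_s),f\rangle\Bigr\}\mathrm{d}s
\end{equation*}
is a martingale. Then apply the hypothesis with $F(x)=x^2$: the linear-drift part pairs with what I already know, and the $F''\equiv 2$ contribution identifies $(M^f_t)^2-\int_0^t\!\int\!\int g(x,y)\sigma(s,x,w_s)\sigma(s,y,w_s)f(x)f(y)\,\mathrm{d}x\,\mathrm{d}y\,\mathrm{d}s$ as a martingale, so this integral is the predictable quadratic variation $\langle M^f\rangle_t$. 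Absolute continuity of $\langle M^f\rangle$ gives that $M^f$ is continuous.

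\emph{Step 2 (from test functions to a martingale measure).} The map $f\mapsto M_t^f$ is linear, and by polarization $\langle M^f,M^g\rangle_t$ equals the analogous bilinear expression. Using the linear growth of $\sigma$ and the hypothesis that $w_t$ takes values in $\mathcal M_\lambda$ (so $|\langle w,f\rangle|$ is controlled on each compact), together with the bound $g(x,y)\leq c_1(1+|x-y|^\alpha)$, one can extend $f\mapsto M^f$ by a limiting procedure to a worthy martingale measure $M(\mathrm{d}s,\mathrm{d}x)$ in the sense of Walsh, with covariation measure
\begin{equation*}
Q(\mathrm{d}s,\mathrm{d}x,\mathrm{d}y)\;=\;g(x,y)\sigma(s,x,w_s)\sigma(s,y,w_s)\,\mathrm{d}s\,\mathrm{d}x\,\mathrm{d}y,
\end{equation*}
dominated (on the support of $\sigma$) by $K(\mathrm{d}s,\mathrm{d}x,\mathrm{d}y)=C(1+|w_s|)^2 g(x,y)\,\mathrm{d}s\,\mathrm{d}x\,\mathrm{d}y$.

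\emph{Step 3 (constructing the colored noise $W$).} On an enlargement of the probability space, introduce an independent colored noise $\widetilde W$ with covariation kernel $g(x,y)$, independent of $M$, and set
\begin{equation*}
W(\mathrm{d}s,\mathrm{d}x)\;:=\;\frac{\ind_{\{\sigma(s,x,w_s)\ne 0\}}}{\sigma(s,x,w_s)}\,M(\mathrm{d}s,\mathrm{d}x)\;+\;\ind_{\{\sigma(s,x,w_s)=0\}}\widetilde W(\mathrm{d}s,\mathrm{d}x).
\end{equation*}
A direct covariance computation using Step~1 and the independence of $\widetilde W$ shows that $W$ is a worthy martingale measure with covariation kernel $g(x,y)\,\mathrm{d}x\,\mathrm{d}y\,\mathrm{d}s$, and by construction $M_t^f=\int_0^t\!\int f(x)\sigma(s,x,w_s)\,W(\mathrm{d}s,\mathrm{d}x)$. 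Substituting this back into the definition of $M^f$ yields the analytically weak form of~\eqref{SPDE2} tested against every $f\in C_c^\infty(\IR^d)$, which is the required weak solution.

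\emph{Main obstacle.} The only genuinely delicate point is Step~3: the naive division by $\sigma$ fails on $\{\sigma=0\}$, so one must introduce the independent noise $\widetilde W$ and verify that the enlargement of the filtration preserves the martingale property of $M^f$ and that the resulting $W$ has exactly the prescribed spatial covariance (rather than a modified one, because of the interaction between the $\sigma\ne 0$ and $\sigma=0$ regions in a single $f$-test function). This is the standard Walsh/Dalang device, and it is precisely where the Markov mapping viewpoint of \cite{kurtz:2010} provides a cleaner alternative that simultaneously characterises the joint law of $(w,W)$.
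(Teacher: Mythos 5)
Your Steps 1 and 2 are fine and standard: taking $F(x)=x$ and $F(x)=x^2$ does identify $M^f$ as a continuous martingale with the stated bracket, and polarization gives the covariation functional. The genuine gap is in Step 3. The pointwise formula
$W=\ind_{\{\sigma\ne 0\}}\sigma^{-1}M+\ind_{\{\sigma=0\}}\widetilde W$
does \emph{not} produce a noise with covariance kernel $g$ when $g$ is coloured. Computing $\langle W(\phi)\rangle_t$ from your definition gives
$\int_0^t\!\int\!\int \phi(x)\phi(y)g(x,y)\bigl[\ind_{\{\sigma(x)\ne0,\sigma(y)\ne0\}}+\ind_{\{\sigma(x)=0,\sigma(y)=0\}}\bigr]\mathrm{d}x\,\mathrm{d}y\,\mathrm{d}s$: the cross terms between the region where $\sigma$ vanishes and the region where it does not are killed by the independence of $\widetilde W$ from $M$, whereas the target covariance requires them to equal $\int\!\int\phi(x)\phi(y)g(x,y)\ind_{\{\sigma(x)=0,\sigma(y)\ne0\}}\mathrm{d}x\,\mathrm{d}y$, which is generally nonzero for a coloured kernel. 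This is not a removable technicality here: in the application $\sigma\propto w(1-w)$ vanishes wherever $w\in\{0,1\}$, which can happen on sets of positive measure, and $g$ has genuine off-diagonal mass. The ``divide by $\sigma$ and fill in with an independent noise'' device is correct for space-time white noise precisely because $\delta(x-y)$ carries no cross-correlation between disjoint regions; for coloured noise the complementary piece must be constructed in the reproducing-kernel Hilbert space of $g$ (i.e.\ one inverts the multiplication-by-$\sigma$ operator on its range and adjoins an independent noise on the orthogonal complement \emph{in that Hilbert space}, as in the infinite-dimensional martingale representation theorem), not pointwise in $x$. A related smaller issue: your dominating measure $K=C(1+|w_s|)^2g(x,y)\,\mathrm{d}s\,\mathrm{d}x\,\mathrm{d}y$ need not be a positive measure, since a covariance kernel $g$ can take negative values; worthiness has to be argued slightly differently.

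For comparison, the paper avoids constructing $W$ from $M$ altogether. It expands the noise over an orthonormal basis $\{\phi_i\}$ into countably many independent Brownian motions $W_t(\phi_i)$, wraps each onto the circle to obtain stationary auxiliary processes $Y_i$, and applies the Markov Mapping Theorem of \cite{kurtz:2010} with the transition function $\alpha(w,\cdot)=\delta_w\otimes\eta_y$ to lift any solution of the $w$-martingale problem to a solution of the joint martingale problem for $(w,Y)$; the driving noise is then recovered exactly from the increments of the $Y_i$ via~\eqref{APB recovery of W}, and Lemma~\ref{APB Maritngale Lemma} shows the resulting pair solves~\eqref{SPDE2}. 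That route never has to invert $\sigma$, which is exactly the point at which your construction breaks. If you want to keep the Walsh-style argument you would need to replace Step 3 by the Hilbert-space representation theorem for the martingale measure $M$ with respect to the covariance structure induced by $g$.
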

The converse of 
Theorem~\ref{APB Equivalence theorem} is a simple application of  
It\^o's formula.  

We closely follow  \cite{kurtz:2010}, who 
provides a powerful
proof of the corresponding result for stochastic 
(ordinary) differential equations, not through the classical 
approach of explicitly constructing the driving noise, but via
the Markov Mapping Theorem. The proof of our case requires only very 
minor modifications of the proof from  \cite{kurtz:2010}.
This proof is very flexible and could be adapted further without any 
major difficulties to cover a wider class of equations, e.g.~with
more singular coefficients $b, \sigma$ in \eqref{SPDE2}.
For convenience, we first recall the Markov Mapping Theorem in the form stated there.

\begin{theorem}[Markov Mapping Theorem,  \cite{kurtz:2010}, Theorem 1.4] \label{APB MMT}
Suppose that $E$ is a complete separable metric space and that
the operator
$B\subseteq \overline{C}(E)\times\overline{C}(E)$ is 
separable and a pre-generator and 
that its domain ${\cal D}(B)$ is closed under
multiplication and separates points in $E$. 
Let $(E_0,r_0)$ be a complete, separable, metric space,
$\gamma:E\rightarrow E_0$ be Borel measurable, and $\alpha$ be a 
transition function from $E_0$ into $E$ 
($y\in E_0\rightarrow \alpha(y,\cdot)\in {\cal P}(E)$ is Borel measurable)
satisfying $\alpha(y,\gamma^{-1}(y))=1$.
Define  
$$C=\left\{\left(\int_Ef(z)\alpha(\cdot,dz),\int_EBf(z)\alpha(\cdot,dz)\right)
:f\in {\cal D}(B)\right\}.$$
Let $\mu_0\in {\cal P}(E_0)$, and define $\nu_0=\int\alpha(y,\cdot)\mu_0(dy)$.
If $\widetilde{U}$ is a solution of the martingale problem for 
$(C,\mu_0)$, then there exists a solution $V$ of the martingale problem
for $(B,\nu_0)$ such that $\widetilde{U}$ has the same distribution on
$M_{E_0}[0,\infty)$ (the space of measurable functions mapping 
$[0,\infty)$ to $E_{0}$, with topology given by convergence in 
Lebesgue measure)
as $U=\gamma\circ V$ and 
\begin{equation}
\IP\left[V(t)\in \Gamma |\widehat{\cal F}_t^U\right]=
\alpha(U(t),\Gamma), \quad \Gamma\in{\cal B}(E), t\in T^U,
\end{equation}
where $\widehat{\cal F}_t^U$ is the completion of the $\sigma$-field $\sigma\left( \int_{0}^{r}h(U(s))\mathrm{d}s : r \leq t \right)$ (for bounded and measurable $h$) and $T^{U}$ is the set of times for which $U$ is measurable.
\end{theorem}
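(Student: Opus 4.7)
The plan is to split the argument into the (essentially formal) forward direction and the (substantive) reverse direction, and to prove the reverse direction via a discrete-time randomization scheme.

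First I would treat the forward direction as a warm-up, since it motivates the definition of $C$. Suppose $V$ is a solution of the $(B,\nu_0)$-martingale problem and set $U=\gamma\circ V$. For any $f\in\mathcal{D}(B)$, the process $f(V(t))-\int_0^t Bf(V(s))\,\mathrm{d}s$ is a martingale with respect to the natural filtration of $V$. Taking conditional expectations with respect to $\widehat{\mathcal{F}}_t^{U}$ and using the hypothesis that, conditional on $U(s)$, the element $V(s)$ has law $\alpha(U(s),\cdot)$, the $B$-martingale becomes a $C$-martingale driven by $U$: this is a tower-property computation that shows $U$ solves the $(C,\mu_0)$-martingale problem.

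For the reverse direction, given $\widetilde{U}$ solving the $C$-martingale problem on $E_0$, the task is to construct $V$ on an enlarged probability space so that its one-dimensional conditional laws equal $\alpha(\widetilde{U}(t),\cdot)$ and the $B$-martingale problem is satisfied. The strategy is to build $V$ as a weak limit of a discretization scheme. Fix a mesh $\Delta_n\to 0$ and, at each grid time $k\Delta_n$, independently resample $V_n(k\Delta_n)$ from $\alpha(\widetilde{U}(k\Delta_n),\cdot)$; between grid points, evolve $V_n$ as a solution of the $B$-martingale problem started at $V_n(k\Delta_n)$ (such solutions exist for every initial distribution precisely because $B$ is a pre-generator). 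Using that $\mathcal{D}(B)$ is closed under multiplication, separates points and is dense in $\overline{C}(E)$, one establishes tightness of $\{V_n\}$ in the Skorokhod space, extracts a weak limit $V$, and verifies that $V$ inherits the $B$-martingale property. The claimed conditional distribution identity $\mathbb{P}[V(t)\in\Gamma\,|\,\widehat{\mathcal{F}}_t^{U}]=\alpha(U(t),\Gamma)$ passes to the limit because it already holds at every grid time of $V_n$.

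The main obstacle is the final verification, since the grid-point resampling destroys the martingale property of $f(V_n(\cdot))-\int_0^\cdot Bf(V_n(s))\,\mathrm{d}s$ at every $k\Delta_n$. The key observation that rescues the argument is that this resampling does \emph{not} change the one-dimensional conditional distribution given $\widetilde{U}$: both the pre-jump and post-jump laws of $V_n(k\Delta_n)$ given $\widetilde{U}(k\Delta_n)$ coincide with $\alpha(\widetilde{U}(k\Delta_n),\cdot)$, thanks exactly to the $C$-martingale problem for $\widetilde{U}$ being the averaging of $B$ against $\alpha$. Consequently the martingale defect introduced at each grid point telescopes into a total error of order $O(\Delta_n)$ on compact time sets, which vanishes in the limit. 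The most delicate point in making this precise is controlling the continuity/jump structure of the limit so that $V$ is genuinely $E$-valued and càdlàg; this is where the separability of $B$ and density of $\mathcal{D}(B)$ do essential work in identifying the limiting generator.
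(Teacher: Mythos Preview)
The paper does not prove this statement; it is merely recalled from \cite{kurtz:2010} (their Theorem~1.4) as a black-box tool in Appendix~\ref{mgpspde}, so there is no proof in the paper to compare against.

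That said, your proposed argument has a genuine gap. The crucial claim in your reverse direction is that, at each grid time $k\Delta_n$, the pre-jump conditional law of $V_n(k\Delta_n-)$ given $\widetilde{U}(k\Delta_n)$ already equals $\alpha(\widetilde{U}(k\Delta_n),\cdot)$, so that the resampling ``does not change the one-dimensional conditional distribution'' and the martingale defect telescopes to $O(\Delta_n)$. But this is essentially the conclusion of the theorem: you are assuming that if $V_n$ is started at time $(k-1)\Delta_n$ with conditional law $\alpha(\widetilde{U}((k-1)\Delta_n),\cdot)$ and then evolved by a $B$-solution, its conditional law at time $k\Delta_n$ given $\widetilde{U}(k\Delta_n)$ is again $\alpha(\widetilde{U}(k\Delta_n),\cdot)$. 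The $C$-martingale property of $\widetilde{U}$ only says that the $\alpha$-average of $Bf$ coincides with the image generator applied to the $\alpha$-average of $f$; it does not, without substantial further argument, give you this propagation of conditional laws under the $B$-flow. Moreover, even if the marginals matched, the martingale property you need is with respect to the natural filtration of $V_n$, not $\widehat{\mathcal F}^{\widetilde U}$; the jumps introduced by your independent resampling are $O(1)$ in that filtration, not $O(\Delta_n)$, so the defect does not obviously vanish in the limit.

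Kurtz's actual proof proceeds quite differently: it recasts the problem as a \emph{filtered martingale problem} and invokes an abstract existence result for such problems under the pre-generator hypothesis (developed in Kurtz~1998 and Kurtz--Nappo), from which both the existence of $V$ and the conditional-distribution identity are read off simultaneously. Your discretization picture is reasonable intuition for why the result should hold, but it does not by itself close the argument.
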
 

The proof of Theorem~\ref{APB Equivalence theorem}
is based on the introduction of an auxiliary  process $Z$, whose 
generator plays the role of $B$ in the statement of Theorem~\ref{APB MMT}. 
This process is introduced in such a way that we can evaluate the 
conditional distributions of the driving noise given the state of the 
process $w$. Those conditional distributions are given in terms of 
stationary processes.

First we describe the representation of the driving noise 
in terms of a countable collection of stationary processes.
By our assumptions, the noise can be realised as an element of 
the Hilbert space $W^{-1 - d,2}(\mathbb{R}^{d})$ (see, 
e.g.~\cite{rippl:2012}), that is on the dual of 
the Sobolev space $W^{1+d,2}(\mathbb{R}^{d})$. Therefore there exists 
an orthonormal basis  $\{\phi_i\}_{i\geq 1}$ 
such that the noise $W$ is  completely characterised by 
a countable sequence of independent, one-dimensional Brownian motions
$W_t(\phi_{i})$ with  
$$ W_t(\phi_i)=\int_0^t\int_{\IR^d}\phi_i(x)W(ds,dx),$$
and, for any adapted process $H$, the integral with respect to the noise 
can be written in the form
$$\int_0^t\int_{\IR^d}H(s-,x)W(ds,dx)=\sum_{i=1}^\infty\int_0^t
\langle H,\phi_i\rangle\phi_i\mathrm{d}W_s(\phi_i).$$

Define $Y=\{Y_i\}_{i\geq 1}$ by 
$$Y_i=\bigg(Y_i(0)+ W_t(\phi_i)\mod 2\pi\bigg).$$
We can recover $W_t(\phi_i)$ (and hence $W$) from $Y_i$, by observing the 
increments. Indeed, setting
\begin{equation*}
\beta_i(t)=\binom{\cos(Y_i(t))+\frac{1}{2}\int_0^t\cos(Y_i(s))ds}{
\sin(Y_i(t))+\frac{1}{2}\int_0^t\sin(Y_i(s))ds},
\end{equation*}
we have
\begin{equation}\label{APB recovery of W}
W_t(\phi_i)=\int_0^t\left(-\sin(Y_i(s)), \cos(Y_i(s))\right)d\beta_i(s)
\end{equation}
and
$$d\beta_i(t)=\binom{-\sin(Y_i(t))}{\cos (Y_i(t))}dW_t(\phi).$$

\begin{remark}\label{APB properties of Y}
Our definition ensures that $Y_i(t)$ are independent.
Identifying $0$ and $2\pi$, we note that $Y=\{Y_i\}_{i\geq 1}$ is a 
Markov process with compact state space $[0,2\pi)^\infty$. 
If $Y_{i}(0)$ is uniformly distributed on $[0,2\pi)$ and independent of $W$, 
then $Y_{i}$ is  stationary.
\end{remark}

We define an auxiliary process $Z$ by
$$Z=\binom{w}{Y},$$
where $w$ is a solution of the martingale problem~(\ref{MGP1})
and $Y = \{Y_{i}\}_{i\geq 1}$.
Let $A$ be the generator associated with the 
martingale problem~(\ref{MGP1}).
Its domain is given by 
\begin{align*}
\mathcal{D}(A) = \bigg\{    
F(\langle \cdot, f \rangle) : f \in C_{c}^{\infty}, F \in C^{\infty}(\mathbb{R})
\bigg\}.
\end{align*}
Let $D_0([0,2\pi))$ be the collection of functions for which 
$$f(0)=f(2\pi_{-}), \quad f^{\prime}(0)=f^{\prime}(2\pi_{-}),
\quad f^{\prime \prime}(0)=f^{\prime \prime}(2\pi_{-}).$$
Let $\widehat{A}$ be the generator
of the process $Z$.
Its domain $\mathcal{D}(\widehat{A})$ is defined by
$$D(\widehat{A})=\left\{\psi(w)\prod_{i=1}^m f_i(y_i):
\psi\in \mathcal{D}(\mathcal{A}), f_i\in D_0\right\}.$$
An application of It\^o's formula guarantees that $\widehat{A}$ can be
written in the form
$$
\widehat{A}\big[
\psi(w)
\prod_{i}^{m}f_{i}(y)
\big]
= 
\prod_{i}^{m}f_{i}(y)A\psi (w)
+
\psi(w)
\sum_{i}^{m} {\cal L}_if_i(y_{i})\prod_j^if_{j}(y_j)
+
\sum_{i}c_{i}\partial_{y_{i}}f_{i},
$$
where ${\cal L}_i$ is the generator of $Y_i$, $\prod_j^i$ denotes the 
product over $j=1,\ldots,m$ with the $i$th term omitted and 
the last term (which is finite due to our construction of the noise) 
is determined by the Meyer process between $w$ and $Y_i$. 
The exact value of the $c_i$'s
will not concern us. We write it in this form to stress 
the dependence on the derivative of $f_{i}$.

To show that solutions to the martingale problem for $\widehat{A}$  
are weak solutions to~\eqref{SPDE2}, we use the following lemma.
\begin{lemma}[ \cite{kurtz:2010}, Lemma~A.1]
\label{APB Maritngale Lemma}
Let $A \subset B(E) \times B(E)$ be a generator, and let $X$ be a 
c\`adl\`ag solution of the martingale problem for $A$. 
For each $f \in \mathcal{D}(A)$, define
\begin{align*}
N_{f} = f(X_{t}) - \int_{0}^{t} Af(X_{s})\mathrm{d}s.
\end{align*}
Suppose $\mathcal{D}(A)$ is an algebra and that $f\circ X$ is c\`adl\`ag 
for each $f \in \mathcal{D}(A)$.
Let $f_{0} \cdots, f_{m} \in \mathcal{D}(A)$ and 
$h_{0}, \cdots, h_{m} \in B(E)$. Then
\begin{align*}
N(t) = \sum_{i} \int_{0}^{t} h(X_{s_{-}})\mathrm{d}N_{f_{i}}(s)
\end{align*}
is a square integrable martingale with Meyer process
\begin{multline}
\langle N \rangle_{t} = \sum_{i,j}\int_{0}^{t}h_{i}(X_{s})h_{j}(X(s))
\\
\times
\bigg[Af_{i}f_{j}(X_s) - f_{i}(X_s)Af_{j}(X_s) - 
f_{j}(X_s)Af_{i}(X_s)\bigg] \mathrm{d}s
\end{multline}
\end{lemma}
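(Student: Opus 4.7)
The plan is to identify the Meyer process of $N$ via standard semimartingale calculus, leveraging the algebra structure of $\mathcal{D}(A)$. First, since each $N_{f_i}$ is a martingale by the martingale problem hypothesis and $h_i$, $f_i$, $Af_i$ are all bounded (elements of $B(E)$), the stochastic integrals $\int_0^{\cdot}h_i(X_{s-})\,dN_{f_i}(s)$ are square integrable martingales; hence $N(t)$, a finite sum of these, is a square integrable martingale, and the remaining task is the identification of $\langle N\rangle_t$.

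By bilinearity of the predictable quadratic variation,
$$\langle N\rangle_t=\sum_{i,j}\int_0^t h_i(X_{s-})h_j(X_{s-})\,d\langle N_{f_i},N_{f_j}\rangle_s,$$
so it suffices to compute $\langle N_{f_i},N_{f_j}\rangle$. Since $f_i(X_t)-N_{f_i}(t)-f_i(X_0)=\int_0^t Af_i(X_s)\,ds$ is a continuous process of finite variation, $\langle N_{f_i},N_{f_j}\rangle=\langle f_i(X),f_j(X)\rangle$. I would compute the latter from the integration-by-parts formula for c\`adl\`ag semimartingales
$$f_i(X_t)f_j(X_t)=f_i(X_0)f_j(X_0)+\int_0^t f_i(X_{s-})\,df_j(X_s)+\int_0^t f_j(X_{s-})\,df_i(X_s)+[f_i(X),f_j(X)]_t,$$
substituting $df_i(X_s)=dN_{f_i}(s)+Af_i(X_s)\,ds$ on the right, and comparing with the independent decomposition
$$f_i(X_t)f_j(X_t)=f_i(X_0)f_j(X_0)+N_{f_if_j}(t)+\int_0^t A(f_if_j)(X_s)\,ds,$$
which is available precisely because $\mathcal{D}(A)$ is an algebra. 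Equating the two expressions, the stochastic integrals against $N_{f_i}$ and $N_{f_j}$ together with $N_{f_if_j}(t)$ combine into a local martingale, and the continuous finite-variation part of $[f_i(X),f_j(X)]$ must equal $\int_0^t[A(f_if_j)-f_iAf_j-f_jAf_i](X_s)\,ds$ by uniqueness of the semimartingale decomposition. This integral is therefore $\langle f_i(X),f_j(X)\rangle=\langle N_{f_i},N_{f_j}\rangle$, and substitution into the expression for $\langle N\rangle_t$ yields the claimed formula.

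The main subtlety, rather than a genuine obstacle, is the careful bookkeeping of left limits versus values when rearranging the decompositions: $f_i(X_{s-})Af_j(X_s)$ and $f_i(X_s)Af_j(X_s)$ agree for $ds$-almost every $s$, so their Lebesgue integrals coincide, but the distinction matters for cleanly isolating the continuous predictable finite-variation part of $[f_i(X),f_j(X)]$ from the local martingale remainder. Once this is handled, square integrability of $N$ is automatic because the resulting compensator is bounded on compact time intervals, using boundedness of all the $f_i$, $Af_i$ and $h_i$.
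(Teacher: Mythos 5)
Your argument is correct, but note that the paper does not prove this lemma at all: it is quoted verbatim from \cite{kurtz:2010} (Lemma~A.1) and used as an imported tool in Appendix~B, so there is no in-paper proof to compare against. What you have written is the standard ``carr\'e du champ'' computation, and it is essentially the argument one finds in Kurtz's appendix: integration by parts for the c\`adl\`ag semimartingales $f_i(X)$, substitution of $\mathrm{d}f_i(X_s)=\mathrm{d}N_{f_i}(s)+Af_i(X_s)\,\mathrm{d}s$, comparison with the decomposition of $f_if_j(X)$ coming from $f_if_j\in\mathcal{D}(A)$ (this is exactly where the algebra hypothesis enters), and identification of $\langle N_{f_i},N_{f_j}\rangle_t=\int_0^t\big[A(f_if_j)-f_iAf_j-f_jAf_i\big](X_s)\,\mathrm{d}s$ by uniqueness of the predictable compensator; bilinearity of $\langle\cdot,\cdot\rangle$ under stochastic integration and the fact that $X_{s-}=X_s$ for Lebesgue-a.e.\ $s$ then give the stated formula, and square integrability follows since the compensator is bounded on compact time intervals. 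One small imprecision worth fixing: $[f_i(X),f_j(X)]_t$ is in general not continuous (it carries the jump terms $\sum\Delta f_i(X_s)\Delta f_j(X_s)$), so the phrase ``continuous finite-variation part of $[f_i(X),f_j(X)]$'' is not quite the right object; what your identity actually shows is that $[f_i(X),f_j(X)]_t-\int_0^t\Gamma(f_i,f_j)(X_s)\,\mathrm{d}s$ is a local martingale, and since the integral term is continuous, adapted and of finite variation, it is the predictable compensator, i.e.\ $\langle f_i(X),f_j(X)\rangle$. With that rewording the proof is complete.
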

\begin{lemma}
Every solution of the martingale problem defined by $\widehat{A}$ is 
a solution 
of~\eqref{SPDE2}, with $W$ defined by~\eqref{APB recovery of W}.
\end{lemma}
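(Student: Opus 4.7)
The plan, following the approach of \cite{kurtz:2010}, is to use Lemma~\ref{APB Maritngale Lemma} to extract from the martingale problem for $\widehat{A}$ the collection of martingales needed to reconstruct the driving noise, and then to identify $w$ as a weak solution of~\eqref{SPDE2}. First, for each $f\in C_c^\infty(\IR^d)$, I would combine the test function $\psi(w)=\langle w,f\rangle$ (which is placed in the relevant domain by a standard approximation, or equivalently by taking $F$ to approximate the identity and localising) with coordinate functions $f_i(y_i)\in\{\cos y_i,\sin y_i,1\}$. The hypothesis on $\widehat{A}$, together with the piece of the generator coming from $A$, gives that
$$M^f(t)=\langle w_t,f\rangle-\langle w_0,f\rangle-\int_0^t\Big\{\tfrac{1}{2}\langle w_s,\Delta f\rangle+\langle b(s,\cdot,w_s),f\rangle\Big\}\mathrm{d}s$$
is a martingale; the purely-$y$ part shows that the $Y_i$ are Brownian motions on the circle, so that $\beta_i$ (and hence $W_\cdot(\phi_i)$ defined by~\eqref{APB recovery of W}) are continuous martingales.

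The technical heart of the argument is the joint bracket computation. Applying Lemma~\ref{APB Maritngale Lemma} to products such as $\cos(y_i)\cos(y_j)$, $\sin(y_i)\sin(y_j)$ and $\langle w,f\rangle\,\cos(y_i)$, $\langle w,f\rangle\,\sin(y_i)$ and cancelling the $Af_i(X)f_j(X)+Af_j(X)f_i(X)$ contributions from the bracket formula, one obtains
$$\langle \beta_i,\beta_j\rangle_t=\delta_{ij}\,t\,I_2,\qquad \langle M^f,W_\cdot(\phi_i)\rangle_t=\int_0^t\langle \sigma(s,\cdot,w_s)\phi_i,f\rangle\,\mathrm{d}s,$$
where the coupling constants $c_i$ that appear in $\widehat{A}$ are chosen precisely so that this last identity holds. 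L\'evy's characterisation then shows that $\{W_\cdot(\phi_i)\}_{i\geq 1}$ is a family of independent standard Brownian motions, and the construction of \cite{kurtz:2010} tells us that they assemble into a worthy spatial noise $W$ with quadratic variation $t\int\!\!\int\phi(x)\phi(y)g(x,y)\mathrm{d}x\mathrm{d}y$.

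To close, I would define the stochastic integral $N_t^f:=\sum_i\int_0^t\langle \sigma(s,\cdot,w_s)f,\phi_i\rangle\,\mathrm{d}W_s(\phi_i)$, identified with $\int_0^t\int_{\IR^d}\sigma(s,x,w_s)f(x)\,W(\mathrm{d}s,\mathrm{d}x)$. A direct calculation using the brackets above gives
$$\langle N^f\rangle_t=\int_0^t\int_{\IR^d}\int_{\IR^d} g(x,y)\sigma(s,x,w_s)\sigma(s,y,w_s)f(x)f(y)\,\mathrm{d}x\mathrm{d}y\mathrm{d}s,$$
which coincides with the quadratic variation prescribed for $M^f$ by the martingale problem. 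Combining this with the matching cross-bracket $\langle M^f,N^f\rangle_t$, one finds $\langle M^f-N^f\rangle_t\equiv 0$, so $M^f=N^f$ almost surely; this is exactly the analytically weak form of~\eqref{SPDE2}.

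The main obstacle is the Meyer-bracket bookkeeping in the middle step: one must verify that the (infinitely many) coupling constants $c_i$ built into $\widehat{A}$ deliver the correct cross-variations $\langle M^f,W_\cdot(\phi_i)\rangle$, and that the countable family of one-dimensional integrals assembles, under the $L^2$ control coming from the Riesz-bounded correlation kernel $g$, into a bona fide stochastic integral against $W$. The remaining steps---domain issues in applying Lemma~\ref{APB Maritngale Lemma} and checking uniqueness of the semimartingale decomposition---are routine once this central identification is made.
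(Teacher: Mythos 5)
Your argument is correct and is essentially the paper's proof written out in full: both reconstruct $W$ from the angular processes $Y_i$ via~\eqref{APB recovery of W} and reduce the identification of the martingale part of $\langle w_t,f\rangle$ with $\int_0^t\int\sigma(s,x,w_s)f(x)\,W(\mathrm{d}s,\mathrm{d}x)$ to a vanishing-quadratic-variation computation based on Lemma~\ref{APB Maritngale Lemma}, the paper merely packaging your three separate bracket identities into a single application of that lemma (taking $f_0=F(\langle w,f\rangle)$, $h_0=1$, $f_i=\cos(y_i)$, $h_i=c_i\sin(y_i)$, $f_{m+i}=\sin(y_i)$, $h_{m+i}=-c_i\cos(y_i)$, so that $N(t)=M^F(t)-\sum_i c_i W_t(\phi_i)$ and $\langle N\rangle_t\equiv 0$ in one stroke). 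One minor slip: $\langle \beta_i,\beta_j\rangle_t$ is not $\delta_{ij}\,t\,I_2$ (for instance $\langle\beta_i^{(1)}\rangle_t=\int_0^t\sin^2(Y_i(s))\,\mathrm{d}s$); the identity you actually need and feed into L\'evy's characterisation is $\langle W(\phi_i),W(\phi_j)\rangle_t=\delta_{ij}t$, which does hold, so nothing downstream is affected.
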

\begin{proof}
The statement follows from an application of 
Lemma~\ref{APB Maritngale Lemma} 
with 
\begin{align*}
f_{0}(w,\bar{y})&= F(\langle w,f\rangle),
\quad
f_{i}(w,\bar{y}) = \cos(y_{i}) \text{ for } i \in \{ 1, \dots m\}, 
\\
f_{i}(w,\bar{y}) &= \sin(y_{i}) \text{ for } i \in \{ m+1, \dots 2m\}, \\
h_{0}(w,\bar{y}) &= 1, \quad h_{i}(w,\bar{y}) = c_{i}\sin(y_{i}) \text{ for } i \in \{ 1, \dots m\}, 
\\ 
h_{i}(w,\bar{y}) &= -c_{i}\cos(y_{i}) \text{ for } i \in \{ m+1, \dots 2m\},
\end{align*}
where we have used $\bar{y}$ to denote the vector $(y_i)_{i\geq 1}$.
This choice of functions guarantees that 
$N(t)$ defined as in Lemma~\ref{APB Maritngale Lemma} is a 
martingale with quadratic variation equal to $0$. 
This, in turn, implies that $w$ is a weak solution to \eqref{SPDE2}.
\end{proof}

\begin{proof}[Proof of Theorem \ref{APB Equivalence theorem}]
We begin by observing that $D(\widehat{A})$ is separable and closed under
multiplication. The next step is to construct the transition function $\alpha$
in the Markov Mapping Theorem. 
Let ${\cal P}(\Omega)$ be the collection of probability measures on $\Omega$. 
Let $\eta_y\in {\cal P}([0,2\pi)^{\infty})$ be the product of independent 
uniform distributions on $[0,2\pi)$. We define  
$$\alpha (w,\cdot)=\delta_{w}\times \eta_y\in {\cal P}
\left(C_{b}(\mathbb{R}^{d})\times [0,2\pi)^{\infty}\right).$$
We observe that 
\begin{equation}
\label{observation 1}
\alpha\bigg( A[F(\langle w, f \rangle)
\prod_{i}^{m}f_{i}(y)] \bigg)= A\alpha\bigg( F(\langle w, f \rangle)
\prod_{i}^{m}f_{i}(y) \bigg).
\end{equation}
Since $\eta_y$ has been chosen in a way which guarantees that it is a 
stationary distribution for $Y$, we have, by definition,
\begin{equation}
\label{observation 2}
\alpha \bigg( F(\langle w, f \rangle)
\sum_{i}^{m} {\cal L}_i f_i(y_{i})\prod^i_{j}f_{j}(y) \bigg) = 0.
\end{equation}
Finally, since the functions $f_{i}$ are chosen to be periodic, 
\begin{equation}
\label{observation 3}
\alpha \bigg( \sum_{i}c_{i}\partial_{y_{i}}f_{i}\bigg) = 0.
\end{equation}
Combining~\eqref{observation 1},~\eqref{observation 2},~\eqref{observation 3}, we conclude that
$$\alpha \bigg( \widehat{A}F(\langle w, f \rangle)
\prod_{i}^{m}f_{i}(y) \bigg) =A\alpha \bigg(F(\langle w, f \rangle)
\prod_{i}^{m}f_{i}(y) \bigg).$$
Our result now  follows from application of the Markov  Mapping Theorem~\ref{APB MMT} .
\end{proof}

\section{Detailed description of simulations}
\label{code}
\subsection{A general description}

We provide a more detailed description of the simulation. 

\paragraph{Initialization}
The simulation begins by choosing random seeds, separately for the times 
of reproduction/migration events and for the selection of individuals during 
those events.
The user specifies the parameters of the simulation: the number of demes, their spatial structure 
(for the simulations reported in Section~\ref{numerics} this 
is always a one-dimensional torus), the number of individuals per deme, 
a correlation function for the environments, the selection coefficient, 
the rate of changes in the environment, the frequency of creating records 
and the time of the simulation. 
The rates of neutral and 
migration rates are determined by the number of individuals per deme, and the
rate of selection events depends additionally on the selection coefficient.

The basic objects in the simulation are initialized. Each deme is 
assigned an identification number and a list of neighbours, 
determined by the selected spatial structure (for the one-dimensional 
torus, each deme has two neighbours). Each individual is randomly 
assigned a type according to a Bernoulli (1/2) distribution. 
Each individual is assigned an identification number, according to
the deme in which it lives.
This number allow us to track the ancestral origin of the 
population forward in time. The initial state of the environment in
each deme is specified. A maximal time of the simulation is set.

The object that determines the times of the events, `the Clock', 
is initialised. For each deme, for each type of event, the next time 
at which an event occurs is encoded. A separate variable with the time of 
the next environment change is also kept. The Clock is initialised with 
the exponential distribution with rates specified for each type of an event. 
The current time is set to $0$.

\paragraph{Simulation} 
As long as the time of the simulation is smaller than the maximal 
time of the simulation the following is repeated.

The clock is asked to return the time, deme identification number $\alpha$, 
and type $\kappa$ of the next event (chosen to be the smallest number on 
the list of all times stored). This time is saved as the current time in 
the simulation. A random number is drawn from an exponential distribution 
with rate determined by the type $\kappa$ of the event. 
This new time is added to the current 
time and passed to the Clock to specify the next time an event of type
$\kappa$ occurs 
in deme $\alpha$.

The event of type $\kappa$ is executed in deme with identification 
number $\alpha$. With a given frequency (specified by the user
with respect to the number of events that have taken place), 
a record of the current state of the population is created.  

\paragraph{Events}
There are four type of events: two types of reproduction events 
(neutral and selective), migration events and environmental events. 

Whenever a reproduction event takes place in deme $\alpha$, two individuals 
are chosen uniformly from deme $\alpha$. One of the selected individuals 
is then randomly chosen to be a potential parent, using a 
Bernoulli (1/2) distribution. If the event is neutral, the potential parent 
is selected as the parent. If the event is selective, the types of the 
two selected individuals are checked. If they are the same, the 
potential parent is selected as the parent. If the types are different, 
the state of the environment is checked. Then the individual with type 
favoured by the environment is selected as the parent. 
The type and the ancestral origin of the parent are assigned to the 
second individual.

Whenever a migration event occurs in deme $\alpha$, one of the neighbouring 
demes is randomly selected according to the uniform distribution. A single 
individual is chosen uniformly from each of deme $\alpha$ and the randomly
selected neighbour and the type and ancestral origin of those individuals 
are swapped. 

Whenever an environmental event occurs, a new state of the environment 
is randomly chosen. For the environmental regimes considered 
in Section~\ref{numerics}, a single number ($1$ or $-1$) is drawn randomly 
from a Bernoulli $(1/2)$ distribution. In the cases of neutral/constant selection,
the environment is not changed. In the fluctuating cases, the environment is 
changed in all demes (consistent with the specified correlation function).

\paragraph{Recording}
Two separate types of record are created, the `proportion file' and 
the collection of `ancestral origin files'. The number of ancestral 
origin files is equal to the number of demes, $\beta$.

The proportion file is a .txt file with $\beta + 3$ columns, separated by a 
single space. The first column contains the information of the time at which 
records are created.
The second column contains the global proportion of type $a$ at the
given time. Columns $3, \ldots ,(\beta +3)$ record 
the proportion of type $a$ in 
each deme. 

Each `ancestral origin' file is a .txt file with $\beta + 1$ columns, 
separated by a single space. File number $\gamma$ contains information 
on the proportion of individuals in each deme with ancestral origin equal 
to $\gamma$. The first column contains the time of creation of the records. 
Columns $2,\ldots ,(\beta +1)$ record the proportion of individuals with 
ancestral origin equal to $\gamma$ in each deme.

\subsection{Implementation}
The simulation was implemented in C\texttt{++}14. The Mersenne Twister 
pseudorandom number generator was used. Figures were created using R, 
(2015, \cite{R_Manual:2015}).

\end{appendix}

\addcontentsline{toc}{section}{References}
\DeclareRobustCommand{\VAN}[3]{#3}

\end{document}